\documentclass[12pt,english]{amsart}
\usepackage[T1]{fontenc}
\usepackage[latin9]{inputenc}
\usepackage{xcolor}
\usepackage{babel}
\usepackage{amstext}
\usepackage{amsthm}
\usepackage{amssymb}
\usepackage{graphicx}
\usepackage{geometry}
\PassOptionsToPackage{normalem}{ulem}
\usepackage{ulem}
\usepackage[pdfusetitle,
 bookmarks=true,bookmarksnumbered=false,bookmarksopen=false,
 breaklinks=false,pdfborder={0 0 1},backref=false,colorlinks=false,pdfpagemode=FullScreen]
 {hyperref}

\makeatletter

\providecolor{lyxadded}{rgb}{0,0,1}
\providecolor{lyxdeleted}{rgb}{1,0,0}
\DeclareRobustCommand{\mklyxadded}[1]{\textcolor{lyxadded}\bgroup#1\egroup}
\DeclareRobustCommand{\mklyxdeleted}[1]{\textcolor{lyxdeleted}\bgroup\mklyxsout{#1}\egroup}
\DeclareRobustCommand{\mklyxsout}[1]{\ifx\\#1\else\sout{#1}\fi}

\numberwithin{equation}{section}
\numberwithin{figure}{section}
\theoremstyle{plain}
\newtheorem{thm}{\protect\theoremname}[section]
\theoremstyle{definition}
\newtheorem{defn}[thm]{\protect\definitionname}
\theoremstyle{definition}
\newtheorem{example}[thm]{\protect\examplename}
\theoremstyle{remark}
\newtheorem*{rem*}{\protect\remarkname}
\theoremstyle{plain}
\newtheorem{prop}[thm]{\protect\propositionname}
\theoremstyle{plain}
\newtheorem{cor}[thm]{\protect\corollaryname}
\theoremstyle{plain}
\newtheorem{lem}[thm]{\protect\lemmaname}

\usepackage{babel}
\usepackage{color}

\providecommand{\definitionname}{Definition}
\providecommand{\examplename}{Example}
\providecommand{\lemmaname}{Lemma}
\providecommand{\propositionname}{Proposition}
\providecommand{\remarkname}{Remark}
\providecommand{\theoremname}{Theorem}
\definecolor{green}{RGB}{0, 180, 0}

\definecolor{cyan}{RGB}{0, 180, 180}

\definecolor{yellow}{RGB}{211,211,0}

\usepackage{upgreek}

\newcommand{\comment}[1]{}

\providecommand{\corollaryname}{Corollary}

\makeatother

\providecommand{\corollaryname}{Corollary}
\providecommand{\definitionname}{Definition}
\providecommand{\examplename}{Example}
\providecommand{\lemmaname}{Lemma}
\providecommand{\propositionname}{Proposition}
\providecommand{\remarkname}{Remark}
\providecommand{\theoremname}{Theorem}

\begin{document}

\global\long\def\rmi{\mathbf{\textrm{i}}}%
\global\long\def\rme{\mathbf{\textrm{e}}}%
\global\long\def\rmd{\mathbf{\textrm{d}}}%
\global\long\def\id{\mathbf{1}}%
\global\long\def\d{\partial}%
\global\long\def\Z{\mathbb{Z}}%
\global\long\def\T{\mathbb{T}}%
\global\long\def\R{\mathbb{R}}%
\global\long\def\RR{\mathbb{\overline{R}}}%
\global\long\def\C{\mathbb{C}}%
\global\long\def\N{\mathbb{N}}%
\global\long\def\Q{\mathbb{Q}}%
\global\long\def\B{\mathcal{B}}%
\global\long\def\D{\mathcal{D}}%
\global\long\def\CC{\mathcal{C}}%
\global\long\def\S{\mathfrak{S}_{\Omega}}%
\global\long\def\set#1#2{\left\{  #1~:~#2\right\}  }%
\global\long\def\spec#1{\mathrm{Spec}\left(#1\right)}%
\global\long\def\br#1{\left(#1\right)}%
\global\long\def\bra#1{\left\langle #1\right\rangle }%

\global\long\def\sf#1{\mathrm{SF}_{#1}}%

\global\long\def\E{\mathcal{E}}%
\global\long\def\Ev{\mathcal{E}_{v}}%
\global\long\def\V{\mathcal{V}}%
\global\long\def\lmin{\mathbf{\textrm{\ensuremath{\ell_{\text{min}}}}}}%
\global\long\def\vp{v_{+}}%
\global\long\def\vm{v_{-}}%
\global\long\def\vpm{v_{\pm}}%
\global\long\def\A{\mathcal{A}}%
\global\long\def\va{v_{a}}%
\global\long\def\fe{f_{E}}%
\global\long\def\fwe{f_{\omega,E}}%
\global\long\def\mnw{\mathcal{M}_{n}\left(\omega,E\right)}%
\global\long\def\mw{\mathcal{M}\left(\omega,E\right)}%
\global\long\def\No#1{N_{\Omega}\left(#1\right)}%
\global\long\def\ct#1{\#_{a}^{#1}\left(\omega\right)}%
\global\long\def\freq#1{\nu_{#1}}%
\global\long\def\pa{\left.\Gamma_{\omega}\right|_{\left[0,n\right]}}%
\global\long\def\Ha{\left.H_{\omega}\right|_{\left[0,n\right]}}%
\global\long\def\dpa{\left.G_{\omega}\right|_{\left[0,n\right]}}%

\global\long\def\arccot{\mathrm{arccot}}%


\global\long\def\gr{\Gamma_{\omega,t}}%

\global\long\def\grh{\Gamma_{\omega,t}^{\textrm{horiz}}}%

\global\long\def\gra{\Gamma_{a}}%


\global\long\def\sc{n_{\omega,t}}%

\global\long\def\sch{n_{\omega,t}^{\textrm{horiz}}}%

\global\long\def\sca{n^{\left(a\right)}}%


\global\long\def\zc{Z_{\omega,t}}%

\global\long\def\zch{Z_{\omega,t}^{\textrm{horiz}}}%

\global\long\def\zca{Z^{\left(a\right)}}%

\title[Gap labels for ergodic operators on decorated graphs]{Johnson--Schwartzman Gap Labelling\\
 for Metric and Discrete Decorated Graphs}
\author{Ram Band \and Gilad Sofer}
\address{Department of Mathematics\\
Technion -- Israel Institute of Technology\\
Haifa, Israel}
\email{ramband@technion.ac.il}
\address{Department of Mathematics and the Helen Diller Quantum Center\\
Technion -- Israel Institute of Technology\\
Haifa, Israel}
\email{gilad.sofer@campus.technion.ac.il}
\begin{abstract}
We study Schrödinger operators on metric and discrete decorated graphs.
The values taken by the integrated density of states (IDS) on spectral
gaps are called gap labels. A natural question is which gap labels
can occur. We answer this for graphs arising from uniquely ergodic
one-dimensional dynamical systems by proving Johnson--Schwartzman
gap-labelling theorems in both the metric and discrete settings.

Our results extend Johnson--Schwartzman gap labelling beyond the
standard one-dimensional setting. Unlike in one dimension, these graphs
may contain cycles, which prevent the use of Sturm oscillation theory
and require different spectral methods.

We also analyze discontinuities of the IDS for certain graph families
and show that not every admissible label corresponds to an open spectral
gap. This reveals a mechanism of gap closing driven by graph geometry
rather than by the underlying dynamics.
\end{abstract}

\maketitle

\section{Introduction and main results \label{sec: Intro and main results}}

This paper studies the integrated density of states (IDS) of Schrödinger
operators on discrete and metric graphs constructed through ergodic
one-dimensional dynamical systems. The IDS, which roughly counts the
number of eigenstates per unit volume below a given energy level,
is a widely studied object in spectral theory, quantum mechanics,
and solid state physics. It is an important tool for characterizing
the spectral gaps -- the connected components of the spectrum's complement.

Since the IDS is monotone increasing and is constant at spectral gaps,
each gap can be assigned a specific label based on the value of the
IDS within the gap. The \emph{gap labels} of an operator are of significant
physical importance, for instance for characterizing the Hall conductance
in the Integer Quantum Hall Effect \cite{AvrOsaSei_phystod03}.

Traditionally, the first step in deriving the gap labels is by determining
the set of their allowed values, in the form of a gap labelling theorem
(GLT). Historically, proving gap labelling theorems often involved
using K-theory, as originated in \cite{BelLimTes_cmp83,Bellissard1992}
(see also \cite{Kellendonk2024} for a modern review). Nevertheless,
for one-dimensional ergodic systems, there is an alternative approach
to gap labelling. This was first done by Johnson \cite{Johnson_jde86}
(see \cite{JohMos_cmp82,Schwartzman_anmath57} for additional background),
who showed that for certain Schrödinger operators on $\R$, the IDS
takes values in a countable group that can be explicitly computed
via a homomorphism introduced by Schwartzman. Since then, and especially
in recent years, this approach (known as Johnson--Schwartzman gap
labelling) has been successfully extended to ergodic Schrödinger operators
on $\Z$, Jacobi matrices, and CMV matrices (see \cite{damFil_book22,DamFil_otaa23,Damanik2023,DamLi_jfa2025}
and references therein). It is known that for one-dimensional systems
and whenever both approaches (K-theory and Johnson--Schwartzman)
provide a well-defined label set, these label sets agree \cite{Kellendonk_private_2026}.

While Johnson--Schwartzman gap labelling has been developed for various
one-dimensional systems, many physical systems are modeled by more
complicated network-like structures. This naturally leads to the study
of Schrödinger operators on discrete and metric (quantum) graphs.
These serve as models for various physical systems, and often exhibit
interesting spectral properties which are not found in standard one-dimensional
systems. With this in mind, the goal of this paper is to extend the
Johnson--Schwartzman gap labelling to ergodic Schrödinger operators
on metric and discrete graphs. In contrast to $K$-theoretic methods,
the Johnson--Schwartzman gap-labelling is more directly tied to oscillatory
properties of eigenfunctions, making it particularly suitable for
Schrödinger operators on graphs. The graphs studied here, called decorated
$\Z$-graphs, are inspired by one-dimensional tilings, see Figure
\ref{fig: TilingGraphs}. Here, the ergodic dynamical system determines
the geometry of the graph itself, rather than just the potential.
Developing Johnson--Schwartzman gap labelling for these graphs requires
going beyond classical arguments based on Sturm's oscillation theorem,
since these graphs contain cycles. We do so via analysis of these
graphs' non-trivial nodal count, and with further tools from the spectral
analysis on metric graphs. Finally, we show that for certain non-generic
metric graphs, not all predicted gap labels actually appear as IDS
gap labels, due to jump discontinuities in the IDS. We explicitly
express all the energies of these discontinuities and the corresponding
jump values for the class of Sturmian comb graphs.

\ 

The remainder of the paper is structured as follows: the following
subsections provide the necessary background for stating our main
results, which are then presented in Subsection \ref{subsec:Main-results}.
Section \ref{sec:Gap-labeling} presents additional necessary tools,
followed by a proof of the GLT for metric graphs (Theorem \ref{thm:GLT}).
Section \ref{sec:Discrete-GLT} then presents the proof of the GLT
for discrete graphs (Theorem \ref{thm:Discrete-GLT}). Section \ref{sec:Discontinuities-in-the-IDS}
studies the existence of discontinuities in the IDS for Sturmian comb
graphs. Appendix~\ref{sec:IDS-exists} presents results about the
existence of the IDS for metric decorated graphs (Proposition~\ref{prop:IDS-existence}),
and Appendix~\ref{sec:Proof-of-counting-lemma-1} contains the proof
of Lemma \ref{lem:Counting-lemma}, which is needed for proving the
metric GLT.

\subsection{Discrete and metric graphs\label{subsec:Quantum-graph-preliminaries}}

The discrete graphs in this work are denoted by $G=\left(\mathcal{V},\mathcal{E}\right)$
(with the vertex and edge sets sometimes denoted $\V_{G}$, $\E_{G}$
for emphasis). For a vertex $v\in\V_{G}$, let $\Ev$ denote the set
of edges incident to $v$. The degree of a vertex $v$, denoted $\deg(v)$,
is the number of edges in $\Ev$. The discrete graphs here are equipped
with the \textit{normalized discrete Laplacian} $\Delta$, acting
on $\ell^{2}\left(G\right)$ as
\begin{equation}
\Delta\psi\left(v\right)=\psi\left(v\right)-\sum_{u\in\Ev}\frac{1}{\sqrt{\deg\left(v\right)\deg\left(u\right)}}\psi\left(u\right).\label{eq:norm-lap}
\end{equation}
We consider infinite, connected graphs, with vertex degrees $\deg\left(v\right)$
uniformly bounded from above. With these assumptions, $\Delta$ is
bounded and self-adjoint, and its spectrum $\spec{\Delta}$ is contained
in $\left[0,2\right]$.

A \emph{metric graph} $\Gamma=\left(G,\ell\right)$ consists of a
discrete graph $G$, together with a length function $\ell:\E\rightarrow\mathbb{R}_{+}$
which assigns a positive length $\ell_{e}$ to each edge $e\in\mathcal{E}$.
This equips $\Gamma$ with the natural structure of a metric space,
by identifying each edge $e\in\E$ with the interval $\left[0,\ell_{e}\right]$.

A \emph{quantum graph} is a metric graph $\Gamma$ equipped with a
self-adjoint differential operator $H$ acting on the Sobolev space
$H^{2}\left(\Gamma\right):=\oplus_{e\in\mathcal{E}}H^{2}\left(0,\ell_{e}\right)$.
The most common example is the Schrödinger operator $H=-\frac{d^{2}}{dx^{2}}+q(x)$,
where $q\in L^{\infty}\left(\Gamma\right)$ is real-valued, along
with vertex conditions which render $H$ self-adjoint. The most common
choice for the vertex conditions is known as the \textit{Neumann-Kirchhoff}
conditions (or standard conditions), which require:

1. The function $f$ is continuous at each $v\in\V$, i.e.,
\begin{equation}
\ensuremath{f|_{e}\left(v\right)=f|_{e'}\left(v\right),\forall e,e'\in\Ev}.\label{eq:-15-2}
\end{equation}

2. The sum of the derivatives of $f$ at $v$, taken in the outward
direction along each edge, is zero:

\begin{align}
 & \text{\ensuremath{\sum_{e\in\Ev}f'|_{e}\left(v\right)=0.}}\label{eq:-16-2}
\end{align}

In this work, we assume that the edge lengths $\ell_{e}$ are uniformly
bounded from above and below. Under this condition (together with
the assumptions above regarding the combinatorial structure of $G$),
the associated Neumann-Kirchhoff Laplacian is self-adjoint and non-negative
(see \cite{Berkolaiko_qg-intro17,BerKuc_graphs,GnuSmi_ap06,Kurasov_book24}
for an extensive introduction to quantum graphs).

\subsection{Dynamics\label{subsec: Dynamics}}

The graphs considered here are defined through one-dimensional dynamical
systems, which govern their geometric structure. We now introduce
the relevant definitions, and refer to \cite{Baake2013,DamFill_book_vol_2,damFil_book22,Lothaire2002}
for additional background.

Let $\mathcal{A}$ be a finite set, called an \textit{alphabet}, and
consider the space of bi-infinite sequences $\mathcal{A}^{\mathbb{Z}}$.
We equip this space with the product topology, as induced by the following
metric:
\begin{equation}
d\left(\omega,\omega'\right)=\sum_{n\in\Z}\frac{1-\delta_{\omega\left(n\right),\omega'\left(n\right)}}{2^{\left|n\right|}},\label{eq:disc-metric}
\end{equation}
where $\delta_{i,j}$ is the Kronecker delta. The space $\mathcal{A}^{\mathbb{Z}}$
is naturally equipped with the \textit{shift} map (or left shift):
\begin{align}
 & T:\mathcal{A}^{\mathbb{Z}}\rightarrow\mathcal{A}^{\mathbb{Z}},\label{eq:shift}\\
 & T\omega\left(n\right)=\omega\left(n+1\right).\label{eq:shift2}
\end{align}

\begin{defn}
\label{def: subshift}A \textit{subshift} is a closed, $T$-invariant
subset $\Omega\subset\mathcal{A}^{\mathbb{Z}}$. We say that $\Omega$
is uniquely ergodic if there exists a unique $T$-invariant probability
measure $\mu$ on $\Omega$.
\end{defn}

We define the letter counting function for $a\in\A$ on $\omega\in\Omega$,
by
\begin{equation}
\ct N:=\#\set{n\in\left\{ 0,...,N-1\right\} }{\omega\left(n\right)=a}.\label{eq:counting}
\end{equation}
For a uniquely ergodic subshift $\Omega$, the letter frequencies
\begin{equation}
\freq a=\lim_{N\rightarrow\infty}\frac{\ct N}{N},\label{eq:frequency}
\end{equation}
are well-defined, independent of $\omega\in\Omega$, and satisfy $\sum_{a\in\mathcal{A}}\freq a=1$
(\cite[prop. 4.4]{Baake2013}, \cite{Oxtoby1952}).
\begin{example}
\label{exa:Sturmian}Let $\alpha\in\left(0,1\right)\backslash\mathbb{Q}$
and $\theta\in[0,1)$. A Sturmian sequence $\omega_{\alpha,\theta}$
over the alphabet $\A=\left\{ 0,1\right\} $ is defined by 
\begin{equation}
\omega_{\alpha,\theta}\left(n\right)=\chi_{[1-\alpha,1)}\left(n\alpha+\theta\text{ mod \ensuremath{1}}\right).\label{eq:Sturmian-sequence}
\end{equation}
 The \emph{Sturmian subshift} is then given by
\[
\Omega_{\alpha}:=\overline{\set{\omega_{\alpha,\theta}}{\theta\in[0,1)}}\subset\A^{\Z},
\]
 and is a uniquely ergodic subshift, with letter frequencies $\alpha$
and $1-\alpha$ for $1$ and $0$, respectively, \cite{Queffelec_book10}.
\end{example}

\subsection{Decorated $\protect\Z$-graphs\label{subsec:Comb-graphs}}

We now introduce the class of \textit{decorated $\Z$-graphs}, which
are the graphs whose IDS is analyzed in this paper. To define those
graphs, we fix a uniquely ergodic subshift $\left(\Omega,T\right)$
over a finite alphabet $\mathcal{A}$. We use these dynamics to define
both metric and discrete families of graphs.

\subsubsection{Metric decorated $\protect\Z$-graphs}

To each $a\in\A$, we associate a compact metric graph $\gra$, which
we call a \textit{decoration}; it may consist of just a single vertex.
We also select a distinguished \textit{base vertex} $v_{a}\in\V_{\gra}$
in each decoration. Given $L>0$, we construct a family of infinite
metric graphs $\Gamma_{\Omega}:=\left(\Gamma_{\omega}\right)_{\omega\in\Omega}$
as follows: for each $\omega\in\Omega$, we begin with the bi-infinite
chain graph whose vertices are $L\Z$. To each vertex $Ln\in L\Z$,
attach the graph $\Gamma_{\omega\left(n\right)}$, by identifying
the base vertex $v_{a}\in\V_{\Gamma_{\omega\left(n\right)}}$ with
the vertex $Ln$ (see Figure \ref{fig: TilingGraphs}). This produces
an infinite metric graph $\Gamma_{\omega}$, obtained by decorating
the chain graph $\Z$ with the graphs $\{\gra\}_{a\in\A}$ according
to $\omega\in\Omega$.

We define the normalized length which is assigned to the graph family
$\Gamma_{\Omega}$ by 
\begin{equation}
\overline{L}\left(\Gamma_{\Omega}\right):=L+\sum_{a\in\mathcal{A}}\freq a\ell_{a},\label{eq:norm-length}
\end{equation}
where $L$ is the horizontal distance between consecutive decorations,
$\freq a$ is the frequency of $a\in\A$ (\ref{eq:frequency}), and
$\ell_{a}$ is the total length of the decoration $\gra$. Since the
frequencies $\va$ are independent of $\omega\in\Omega$, the normalized
length (\ref{eq:norm-length}) may also be expressed through the average
growth rate of geodesic balls (which is independent of the choice
of $\omega\in\Omega$):
\begin{equation}
\overline{L}\left(\Gamma_{\Omega}\right)=\lim_{r\rightarrow\infty}\frac{\left|\left.\Gamma_{\omega}\right|_{B\left(x,r\right)}\right|}{2\frac{r}{L}},\quad\quad\forall\omega\in\Omega,\quad x\in\Gamma_{\omega},\label{eq:norm-length-2}
\end{equation}
where $B_{\omega}\left(x,r\right)$ is the geodesic ball of radius
$r$ around $x\in\Gamma_{\omega}$, and $\left|\cdot\right|$ is the
standard Lebesgue measure.

\begin{figure}
\includegraphics[scale=0.5]{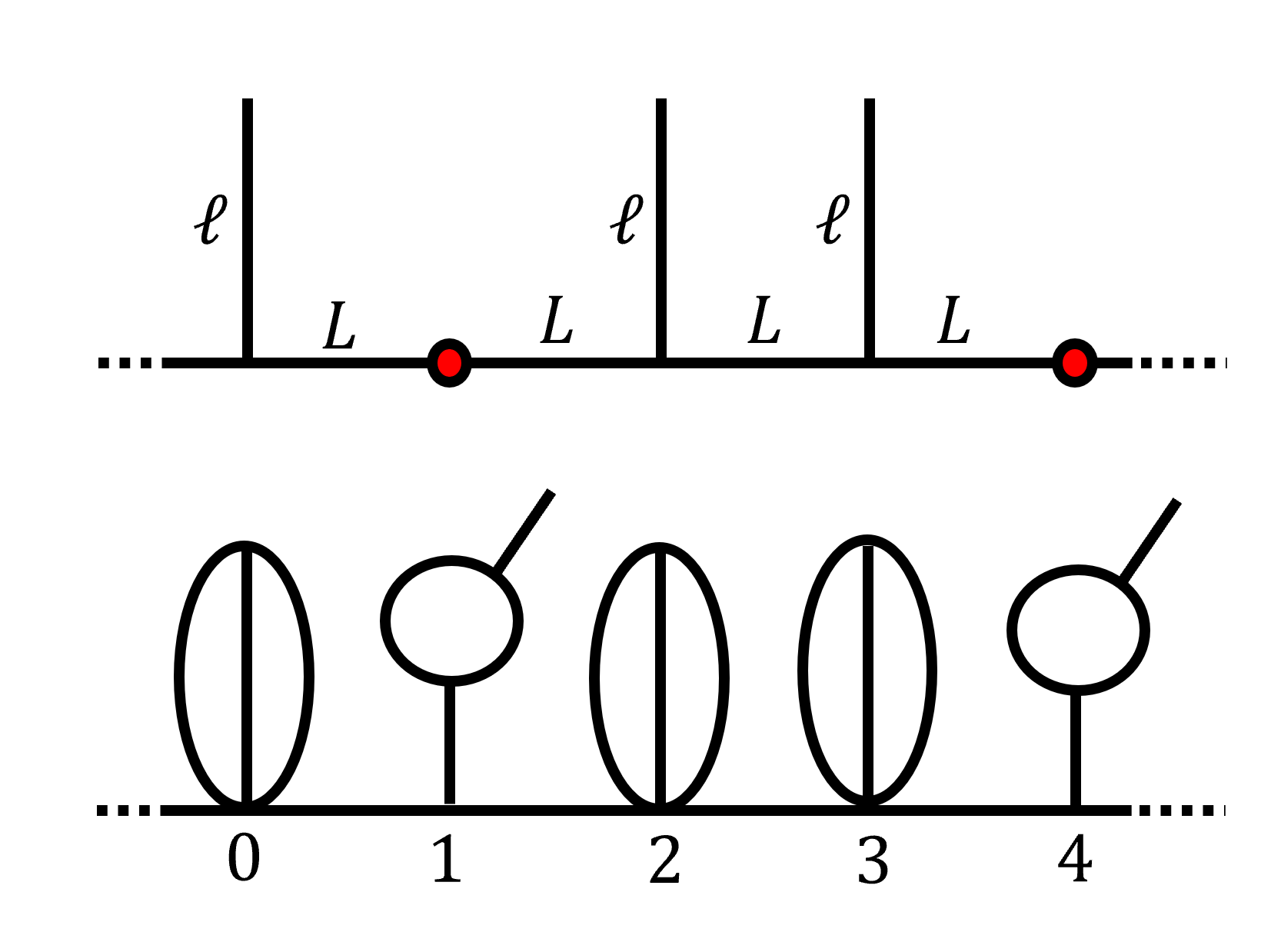}

\caption[Orientation on the edges.]{A Sturmian comb (top), along with a decorated $\protect\Z$-graph
with more complex decorations.\label{fig: TilingGraphs}}
\end{figure}

\begin{example}[Sturmian comb]
\label{exa:Sturmian comb}Taking the Sturmian subshift $\Omega_{\alpha}$
from Example \ref{exa:Sturmian}, one may construct for each $\omega\in\Omega_{\alpha}$
a decorated $\Z$-graph $\Gamma_{\omega}$ by taking the bi-infinite
chain with vertices $L\mathbb{Z}$, and attaching a dangling edge
of length $\ell>0$ at all vertices $Ln$ such that $\omega\left(n\right)=1$
(see Figure \ref{fig: TilingGraphs}). In the notations above, this
means that $\Gamma_{1}$ is a single edge graph of length $\ell$
and $\Gamma_{0}$ is the single vertex graph. In this case,
\begin{equation}
\overline{L}\left(\Gamma_{\Omega}\right)=L+\alpha\ell.\label{eq:L-Sturm}
\end{equation}

~
\end{example}

Any family of graphs $\Gamma_{\Omega}$ is equipped with a naturally
induced shift, 
\begin{align}
 & T:\Gamma_{\Omega}\rightarrow\Gamma_{\Omega},\label{eq:randomgraph1}\\
 & T\Gamma_{\omega}=\Gamma_{T\omega},\label{eq:randomgraph2}
\end{align}
where we abuse the notation $T$. One can further define
\begin{align}
 & T:C\left(\Gamma_{\omega}\right)\rightarrow C\left(\Gamma_{T\omega}\right),\label{eq:Koopman}\\
 & \left(Tf\right)\left(x\right)=f\left(T^{-1}x\right),
\end{align}
where we once again abuse the notation $T$. Equipping each decorated
graph $\Gamma_{\omega}$ with the Kirchhoff Laplacian $H_{\omega}$,
one can consider the family $H_{\Omega}:=\left(H_{\omega}\right)_{\omega\in\Omega}$
as a dynamical system of operators, and get that the family $H_{\Omega}$
is covariant, i.e.,
\begin{equation}
H_{T\omega}=TH_{\omega}T^{-1},\,\forall\omega\in\Omega,\label{eq:covariant}
\end{equation}
which means that the operators $H_{\omega}$ and $H_{T\omega}$ are
unitarily equivalent. Unique ergodicity implies that $\spec{H_{\omega}}$
is in fact almost-surely independent of $\omega\in\Omega$ (see also
\cite{BanSof_prep24a}), and so we can simply denote it by $\spec{H_{\Omega}}$.
\begin{rem*}
Most results in this work should also hold true when the decorated
graphs are equipped with Schrödinger operators whose potentials and
vertex conditions are naturally compatible with the subshift $\left(\Omega,T\right)$.
For simplicity, we focus here on the Kirchhoff Laplacian.
\end{rem*}

\subsubsection{Discrete decorated $\protect\Z$-graphs}

We consider a discrete version of decorated $\Z$-graphs, constructed
in the same manner. Let $\left(\Omega,T\right)$ be a uniquely ergodic
subshift over an alphabet $\A$. Let $\left(G_{a}\right)_{a\in\A}$
be a set of discrete graphs (the possible decorations), each assigned
a base vertex $v_{a}\in\V_{G_{a}}$. Form a family of \textit{discrete
decorated }$\Z$-graph\textit{s} $G_{\Omega}:=\left(G_{\omega}\right)_{\omega\in\Omega}$
as follows: the graph $G_{\omega}$ is constructed from the chain
graph $\Z$ by attaching to each vertex $n\in\Z$ the decoration $G_{\omega(n)}$,
via the identification of the vertex $n\in\Z$ with the base vertex
of $G_{\omega\left(n\right)}$. Each graph $G_{\omega}$ is equipped
with the normalized discrete Laplacian, $\Delta_{\omega}$. We get
the operator family $\Delta_{\Omega}:=\left(\Delta_{\omega}\right)_{\omega\in\Omega}$,
and as above, $\spec{\Delta_{\omega}}$ is almost-surely independent
of $\omega\in\Omega$, and is simply denoted by $\spec{\Delta_{\Omega}}$.

In this setting, the analogue of the normalized length will be the
average number of vertices:
\begin{align}
 & \overline{V}\left(G_{\Omega}\right):=\sum_{a\in\A}\freq a\left|\V_{G_{a}}\right|.\label{eq:discrete-norm-l}
\end{align}

\subsection{Integrated density of states (IDS)\label{subsec:IDS}}

\subsubsection{IDS for metric graphs}

Let $\left(\Omega,T\right)$ be a uniquely ergodic subshift, with
an associated family of metric decorated $\Z$-graphs $\Gamma_{\Omega}$.
For $\omega\in\Omega$ and $n\in\mathbb{N}$, we restrict $\Gamma_{\omega}$
to a compact graph by removing the edges $(-1,0)\cdot L$ and $(n,n+1)\cdot L$
from $\Gamma_{\omega}$, and denote by $\pa$ the resulting compact
connected component (see Figure \ref{fig: GammaN}). At the cut vertices,
$0$ and $nL$, impose Neumann-Kirchhoff vertex conditions (though
the results below do not depend on the vertex conditions as long as
they render the operator self-adjoint). The corresponding Kirchhoff
Laplacian $\Ha$ is bounded from below and has a compact resolvent,
and thus has purely discrete spectrum accumulating at infinity. Denote
the associated normalized spectral counting function by
\begin{equation}
N_{\omega}^{(n)}\left(E\right):=\frac{\#\left\{ \lambda\in\spec{\Ha}:\lambda\le E\right\} }{\left|\pa\right|}.\label{eq:truncation-IDS}
\end{equation}

\begin{figure}
\includegraphics[scale=0.55]{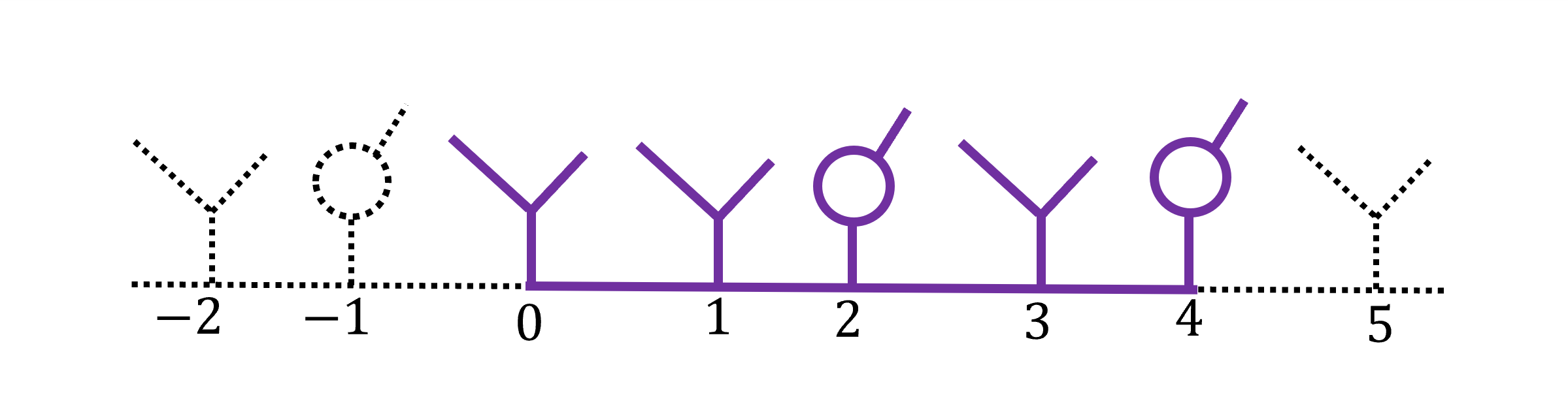}

\caption{The compact graph $\Gamma_{\omega}|_{\left[0,4\right]}$, constructed
by truncating the infinite graph $\Gamma_{\omega}$ and keeping five
decorations.\label{fig: GammaN}}
\end{figure}

\begin{prop}
\label{prop:IDS-existence}For almost all $\omega\in\Omega$, the
sequence of functions $N_{\omega}^{(n)}\left(E\right)$ converges
uniformly as $n\rightarrow\infty$ to a function $N_{\Omega}\left(E\right):\mathbb{R}\rightarrow\mathbb{R}$.
We call the function $N_{\Omega}\left(E\right)$ the integrated density
of states (IDS) of the family $H_{\Omega}$.
\end{prop}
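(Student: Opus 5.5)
The plan is to write $N_\omega^{(n)}$ as an ergodic average and let unique ergodicity supply the convergence; the subtle point is the uniformity in $E$.

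\emph{Step 1: counting function as an ergodic sum, up to boundary errors.} For a fixed $E$, define the pointwise counting function $\#_\omega^{(n)}(E):=\#\{\lambda\in\spec{\Ha}:\lambda\le E\}$. I would compare it to the decoupled operator obtained by cutting $\pa$ at every chain vertex $kL$, say at the midpoints of the horizontal edges. The pieces are then the finitely many compact ``tiles''. Each tile consists of a decoration $\Gamma_{\omega(k)}$ together with two half-edges of length $L/2$, carrying Neumann conditions at the cut points.

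Changing the vertex conditions at one point (a rank-one or finite-rank perturbation of the quadratic form, through interlacing/bracketing for quantum graphs) changes the eigenvalue count below any $E$ by at most a constant $C$. This constant depends only on the maximal degree, and not on $E$ or $n$. Hence
\[
\Bigl|\#_\omega^{(n)}(E)-\sum_{k=0}^{n}\#_{\omega(k)}(E)\Bigr|\le C'n
\]
is too crude. Instead I would cut only every $m$-th chain vertex into blocks of $m$ consecutive decorations. Block counts depend only on the word $\omega(k)\cdots\omega(k+m-1)$, and the error per cut is $\le C$. Dividing by $|\pa|\sim n\overline{L}$, the error is $O(1/m)$, uniformly in $E$.

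\emph{Step 2: ergodic averaging.} The block count is $F_m(E,T^{k}\omega)$, where $F_m(E,\cdot)$ is a locally constant function on $\Omega$: it depends only on finitely many coordinates. Unique ergodicity gives uniform convergence of Birkhoff averages of continuous functions for every $\omega$. Hence $\frac{1}{n}\sum_{j}F_m(E,T^{jm}\omega)$ converges; I would handle the stride $m$ by averaging over the $m$ offsets, which changes the error only by $O(1/m)$. The normalization $|\pa|/n\to\overline{L}(\Gamma_\Omega)$ converges by the frequency formula (\ref{eq:frequency}). Combining these, $N_\omega^{(n)}(E)$ is Cauchy up to $O(1/m)$ for each $E$, and letting $m\to\infty$ defines $N_\Omega(E)$. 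This holds in fact for every $\omega$, which is stronger than almost every $\omega$.

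\emph{Step 3: uniformity in $E$, the main obstacle.} For fixed $m$, the function $F_m(E,\cdot)$ takes only finitely many values as $\omega$ varies, because there are finitely many words of length $m$. Each value is a monotone step function of $E$. For $E$ in a compact interval $[E_0,E_1]$, only finitely many jump points occur, so convergence at finitely many $E$, together with monotonicity in $E$, yields uniformity on the interval.

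For large $E$ this argument fails, and Weyl asymptotics of each block is needed. I would use the bracketing bound $|\#(E)-|\Gamma|\sqrt{E}/\pi|\le C(\#\text{vertices})$, which holds uniformly in $E$. Applied to all of $\pa$, it shows $N_\omega^{(n)}(E)-\sqrt{E}/\pi=O(1/L)$ bounded independently of $n$, but not small. To get uniform convergence over all $E\ge0$, I would therefore combine two facts. First, the difference $N_\omega^{(n)}-N_\Omega$ is controlled by the $O(1/m)$ block error uniformly in $E$. Second, the remaining difference is a finite linear combination, with word-frequency coefficients, of fixed step functions. Its convergence is uniform in $E$ because the coefficients are frequencies that converge independently of $E$, and the block counts are bounded by $C\,m(1+\sqrt{E})$. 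That bound grows with $E$, which is exactly the difficulty. To handle it, I would subtract the Weyl term $|\text{block}|\sqrt{E}/\pi$ from each block count. Length frequencies converge exactly, and the remainder is bounded by $Cm$ uniformly in $E$. This gives uniform convergence on all of $\mathbb{R}$.
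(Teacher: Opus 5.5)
Your proof is correct, but it takes a different route from the paper.

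\textbf{The paper's route.} The paper (Appendix~\ref{sec:IDS-exists}) decouples $\Gamma_\omega|_Q$ into single tiles by imposing Dirichlet conditions at the midpoints of \emph{all} horizontal edges. It does not try to make the resulting spectral shift $\xi_\omega^Q=n_\omega^Q-n_{\omega,D}^Q$ small; this shift is of order $|Q|$. Instead it checks that $Q\mapsto\xi_\omega^Q$ is $\omega$-equivariant, bounded and almost-additive as a map into bounded right-continuous functions with the sup-norm. It then invokes the Banach-space-valued ergodic theorem of Lenz--M\"uller--Veseli\'c, and handles the decoupled part $\sum_a\#_a^Q(\omega)\,n_D^a(E)$ by letter frequencies.

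\textbf{Your route.} You effectively inline the proof of that ergodic theorem for this one-dimensional situation. Cutting only every $m$-th horizontal edge makes the coupling error $O(1/m)$ after normalization. Averaging over the $m$ offsets correctly converts stride-$m$ sums into ordinary Birkhoff sums, which is needed because $T^m$ need not be uniquely ergodic. Unique ergodicity is then used only for locally constant functions, i.e.\ word frequencies.

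\textbf{What each buys.} Your argument is self-contained and gives convergence for every $\omega$. The paper's is shorter and additionally yields the Pastur--Shubin trace formula, which identifies $N_\Omega$ as a trace per unit volume; your argument does not produce this.

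\textbf{Uniformity in $E$.} Your Weyl subtraction in Step~3 is the right mechanism for uniformity on all of $\mathbb{R}$, rather than only on half-lines $(-\infty,E_1]$. The paper's sup-norm argument controls $\xi_\omega^Q$ uniformly in $E$. However, its remaining steps are only locally uniform in $E$, because $n_D^a(E)$ grows like $\sqrt{E}$. These steps are replacing $\bigl|\Gamma_\omega|_{Q_j}\bigr|$ by $|Q_j|\,\overline{L}(\Gamma_\Omega)$, and passing to the limit in $\frac{1}{|Q_j|}\sum_a\#_a^{Q_j}(\omega)\,n_D^a(E)$.

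\textbf{A bookkeeping point to fix.} What makes the Weyl terms harmless is not that ``length frequencies converge exactly.'' It is that, for each offset, the blocks \emph{including the two incomplete end blocks} partition $\Gamma_\omega|_{[0,n]}$. Their Weyl terms therefore sum to exactly $\bigl|\Gamma_\omega|_{[0,n]}\bigr|\sqrt{E}/\pi$.

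After averaging over offsets, the interior blocks alone have total length $\bigl|\Gamma_\omega|_{[0,n]}\bigr|-O(m)$. Discarding the end blocks' Weyl terms would leave an error of order $m\sqrt{E}/n$ in $N_\omega^{(n)}(E)$, which is not uniform in $E$. If you keep them, each end block contributes only $O(m)$ after subtraction, uniformly in $E$, and the rest of your argument goes through.

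A minor simplification: on $(-\infty,E_1]$ you do not need the jump-point and monotonicity argument. Finitely many words, with block counts bounded on that half-line, already give uniform convergence there.
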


The proof of the proposition above relies on an adaptation of a method
from \cite{GruLenVes_jfa07} and appears in Appendix \ref{sec:IDS-exists}.

The IDS is a nondecreasing function which is constant at each connected
component of the complement of the spectrum (called \textit{spectral
gaps}). We are interested in the gap labels of $N_{\Omega}$:
\begin{equation}
\mathcal{GL}\left(N_{\Omega}\right):=\left\{ N_{\Omega}\left(E\right):E\in\R\backslash\spec{H_{\Omega}}\right\} \subset\mathbb{R}.\label{eq:GL}
\end{equation}

\subsubsection{IDS for discrete graphs}

The IDS $N_{\Omega}^{\Delta}\left(E\right)$ for the discrete Laplacian
is defined similarly to the metric case discussed above. Remove from
$G_{\omega}$ the two edges $(-1,0)$ and $(n,n+1)$ and denote by
$\dpa$ the resulting compact connected component. The resulting operator
$\left.\Delta_{\omega}\right|_{[0,n]}$ is a self-adjoint matrix.
We define the IDS as the limit of associated normalized spectral counting
functions,
\begin{equation}
N_{\Omega}^{\Delta}\left(E\right):=\lim_{n\rightarrow\infty}\frac{\#\left\{ \lambda\in\spec{\left.\Delta_{\omega}\right|_{[0,n]}}:\lambda\le E\right\} }{\left|\V_{\dpa}\right|},\label{eq:truncation-IDS-2}
\end{equation}
where the limit exists for almost all $\omega\in\Omega$ and its value
is independent of $\omega$ (the proof is similar to that of Proposition\ \ref{prop:IDS-existence},
see Appendix\ \ref{sec:IDS-exists}).

\subsection{The Schwartzman group\label{subsec: Schwartzman-group}}

We now define the Schwartzman group, which plays a central role in
Theorems \ref{thm:GLT} and \ref{thm:Discrete-GLT}. For more details,
see \cite{DamEmiFil_jst23,damFil_book22,DamFil_otaa23,Damanik2023}
and references therein.

Let $\left(\Omega,T\right)$ be a uniquely ergodic subshift, equipped
with a (unique) invariant probability measure $\mu$. We associate
with this dynamical system a suspension space:
\begin{equation}
X_{\Omega}:=\Omega\times\left[0,1\right]/\left\{ \left(\omega,1\right)\sim\left(T\omega,0\right)\right\} .\label{eq:suspension}
\end{equation}
The space $X_{\Omega}$ is naturally endowed with the translation
flow in the second factor:
\begin{align}
 & \tau^{t}:X_{\Omega}\rightarrow X_{\Omega}\,\,\,\,\left(t\in\R\right),\label{eq:flow-1}\\
 & \tau^{t}\left(\omega,s\right)=\left(\omega,t+s\text{ mod \ensuremath{1}}\right),\label{eq:flow-2}
\end{align}
and with a probability measure $\eta$:
\begin{equation}
\int_{X_{\Omega}}fd\eta=\int_{0}^{1}\int_{\Omega}f\left(\left[\omega,t\right]\right)d\mu\left(\omega\right)dt.\label{eq:measure-prod}
\end{equation}

Let $C^{\sharp}\left(X_{\Omega}\right)$ be the space of homotopy
classes of functions from $X_{\Omega}$ to the one-dimensional torus
$\mathbb{T}:=\R/\Z$. For a given function $\phi:X_{\Omega}\rightarrow\T$,
let $\phi_{x}$ denote the restriction of $\phi$ to the orbit of
a point $x=\left(\omega,s\right)\in X_{\Omega}$ under the flow $\tau^{t}$:
\begin{align}
 & \phi_{x}:\R\rightarrow\T,\label{eq:lift1}\\
 & \phi_{x}\left(t\right)=\phi\left(\tau^{t}x\right).\label{eq:lift2}
\end{align}
Since $\R$ is the universal cover of $\T,$ the function $\phi_{x}$
is naturally lifted to a map $\widetilde{\phi}_{x}\left(t\right):\R\rightarrow\R$.
With this in mind, define the Schwartzman homomorphism by
\begin{align}
 & S_{\Omega}:C^{\sharp}\left(X_{\Omega}\right)\rightarrow\mathbb{R},\label{eq:SH1}\\
 & S_{\Omega}\left(\left[\phi\right]\right)=\lim_{t\rightarrow\infty}\frac{\widetilde{\phi}_{x}\left(t\right)}{t},\label{eq:SH2}
\end{align}
where the limit above is $\mu$ almost-surely independent of $\omega$
(where $x=(\omega,s)$) \cite[thm. 3.9.13]{damFil_book22}. In other
words, the Schwartzman homomorphism is the average rate of rotation
of $\phi_{x}$ along the flow.
\begin{defn}
The \emph{Schwartzman group} $\S$ is the image of $S_{\Omega}$.
\end{defn}

The Schwartzman groupis a countable subgroup of $\R$, which depends
on the full dynamical system $\left(\Omega,T,\mu\right)$, but for
brevity we denote it by $\S$.
\begin{example}
\label{exa:SG-Sturmian} Let $\alpha\in\left(0,1\right)\backslash\Q$
and let $\left(\Omega_{\alpha},T\right)$ be the Sturmian subshift
from Example \ref{exa:Sturmian}. Its Schwartzman group is given by
\begin{equation}
\mathfrak{S}_{\Omega_{\alpha}}=\set{\alpha n+m}{m,n\in\Z},\label{eq:SG-Sturm}
\end{equation}
see \cite[thm. 10.9.3]{DamFill_book_vol_2}.
\end{example}

\subsection{Main results\label{subsec:Main-results}}

Our first main result is a gap labelling theorem (GLT) for the metric
graph operator family $H_{\Omega}$.
\begin{thm}
\label{thm:GLT}Let $\left(\Omega,T\right)$ be a uniquely ergodic
subshift, with an associated family of metric decorated $\Z$-graphs
$\Gamma_{\Omega}$, equipped with the Kirchhoff Laplacian. Then,
\begin{equation}
\mathcal{GL}\left(N_{\Omega}\right)\subset\frac{1}{\overline{L}\left(\Gamma_{\Omega}\right)}\S\cap[0,\infty),\label{eq:GL1}
\end{equation}
where $\S$ is the Schwartzman group.
\end{thm}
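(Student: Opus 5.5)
Fix an energy $E$ in a spectral gap, $E\notin\spec{H_{\Omega}}$. The plan is to express $N_{\Omega}(E)$ as the rotation number of a continuous map $\phi_{E}:X_{\Omega}\to\T$ built from the dynamics. Then $\overline{L}(\Gamma_{\Omega})N_{\Omega}(E)=S_{\Omega}([\phi_{E}])\in\S$. In the classical setting on $\Z$ or $\R$, one uses the Prüfer angle of a solution together with Sturm oscillation: the eigenvalue count of a truncation equals the number of zeros of a solution, which equals the winding of the Prüfer angle up to $O(1)$. Here the graph has cycles inside the decorations, so Sturm oscillation fails on the whole graph. I would therefore reduce everything to the horizontal chain.

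\textbf{Step 1 (reduction to the chain).} For $E$ not in the Dirichlet spectra of the decorations relative to their base vertices, each decoration $\Gamma_{a}$ acts on the chain as a point interaction at $Ln$. Solving the Kirchhoff problem on $\Gamma_{a}$ with prescribed value at $v_{a}$ gives a $\delta$-type condition whose coupling is the Dirichlet-to-Neumann value $m_{a}(E)$. Hence solutions of $H_{\omega}f=Ef$ restricted to the chain are governed by $SL(2,\R)$ transfer matrices
\[
M_{n}(\omega,E)=R_{L}(E)\begin{pmatrix}1&0\\ m_{\omega(n)}(E)&1\end{pmatrix},
\]
which depend only on $\omega(n)$. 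The finitely many exceptional energies, namely the Dirichlet eigenvalues of the decorations, can be treated either separately or by a perturbation or continuity argument, since gaps are open intervals.

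\textbf{Step 2 (counting lemma).} I would invoke Lemma \ref{lem:Counting-lemma} in the form: the number of eigenvalues of $\Ha$ below $E$ equals the number of zeros of the chain solution on $[0,nL]$, plus $\sum_{k\le n}\#\{\text{Dirichlet eigenvalues of }\Gamma_{\omega(k)}\text{ below }E\}$, up to an $O(1)$ error. This should follow from a nodal-count or interlacing argument: cutting each decoration at its base vertex changes the counting function by a bounded amount per decoration, and these changes are exactly accounted for by the Dirichlet counts. Proving this step is the main obstacle, and it is where the nontrivial nodal count on graphs with cycles enters. Dividing by $|\pa|\sim n\overline{L}$ and using unique ergodicity gives
\[
\overline{L}\,N_{\Omega}(E)=\lim_{n\to\infty}\frac{\#\text{zeros on }[0,nL]}{n}+\sum_{a}\freq a\, n_{a}(E),
\]
where $n_{a}(E)\in\Z$ is the Dirichlet count for $\Gamma_{a}$.

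\textbf{Step 3 (Schwartzman identification).} The frequency term $\sum_{a}n_{a}\freq a$ lies in $\S$: frequencies of cylinder sets are Schwartzman values, obtained by taking $\phi$ that winds once across $\{\omega:\omega(0)=a\}\times[0,1]$ and is constant elsewhere. This map is continuous because cylinders are clopen. For the zero count, the key input is uniform hyperbolicity. Since $E$ lies in a gap, the cocycle $(T,M(\cdot,E))$ is uniformly hyperbolic, so there is a continuous unstable direction $u(\omega)\in\mathbb{RP}^{1}$. I would define $\phi(\omega,s)$ as the Prüfer angle, divided by $\pi$, of the solution on the edge $[0,L]$ at position $sL$, starting from $u(\omega)$. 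Across a vertex, the angle jumps by the $\delta$-interaction; I would interpolate this jump continuously within the lift. This gives a continuous map $X_{\Omega}\to\T$, and its lifted winding counts the zeros up to $O(1)$ per unit time bound. Hence the zero density equals $S_{\Omega}([\phi])\in\S$. Nonnegativity is immediate because $N_{\Omega}\ge0$, and since $\S$ is a group the sum lies in $\S$, which yields (\ref{eq:GL1}).
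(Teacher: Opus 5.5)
Your argument is correct in outline, but it accounts for the decorations differently from the paper.

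\textbf{Comparison with the paper.} The paper builds a generalized Pr\"ufer angle on the whole graph, using $\sum_{x\in s_{\omega}(t)}f_{\omega,E}'(x)/f_{\omega,E}(x)$ over spheres that sweep through the rescaled decorations. So $S_{\Omega}([\phi])$ counts \emph{all} zeros of the Weyl solution, including those inside decorations. Sturm on the horizontal $\delta$-path together with Lemma~\ref{lem:Counting-lemma} then gives $\overline{L}\,N_{\Omega}(E)=S_{\Omega}([\phi])-\sum_{a}\nu_{a}\sigma^{(a)}(E)$, with nodal surpluses. You take the angle on the chain only and get $\overline{L}\,N_{\Omega}(E)=S_{\Omega}([\phi_{\mathrm{chain}}])+\sum_{a}\nu_{a}n_{D}^{(a)}(E)$. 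The two formulas agree because $Z^{(a)}(E)=n_{D}^{(a)}(E)+\sigma^{(a)}(E)$.

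Your version buys three things:
\begin{itemize}
\item it avoids nodal surpluses and the sphere/rescaling construction;
\item the integer attached to each $\nu_{a}$ is intrinsic to $\Gamma_{a}$;
\item it proves $\nu_{a}\in\mathfrak{S}_{\Omega}$ directly rather than citing the cylinder-set description.
\end{itemize}
Its extra cost is the claim that the reduced cocycle is uniformly hyperbolic in a gap. That is a Johnson-type theorem for this energy-dependent $\delta$-chain, and you would need to prove it. One route: failure of uniform hyperbolicity yields a bounded chain solution for some $\omega$; this extends through the decorations to a bounded generalized eigenfunction, forcing $E\in\mathrm{Spec}(H_{\omega})$; and that spectrum is $\omega$-independent on the minimal component. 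The paper sidesteps this by working directly with the Weyl solution and the continuity of $m_{\omega}^{+}(E)$ in $\omega$. The paper's route also ties the label to the nodal count on the full graph, which is part of its motivation.

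\textbf{Step 2 needs tightening.} Your identity is true as stated, but your justification does not give it. A change of ``a bounded amount per decoration'' only yields an $O(n)$ error, which ruins the limit. What you need is that decoupling at every base vertex, with the matched Robin parameters $m_{a}(E)$, shifts the count by \emph{exactly} one per decoration. That is precisely Lemma~\ref{lem:Counting-lemma}, proved via monotone eigenvalue curves:
\[
n_{\omega,t}=n_{\omega,t}^{\mathrm{horiz}}+\sum_{a}\#_{a}^{t}(\omega)\bigl(n^{(a)}-1\bigr).
\]
Two further facts turn this into your identity.
\begin{itemize}
\item Sturm on the horizontal path gives $n_{\omega,t}^{\mathrm{horiz}}=Z_{\omega,t}^{\mathrm{horiz}}+1$.
\item Rank-one interlacing between the Robin and Dirichlet eigenvalues of $\Gamma_{a}$ at $v_{a}$, $\lambda_{k}(\gamma)\le\lambda_{k}^{D}\le\lambda_{k+1}(\gamma)$, gives $n^{(a)}(E)=n_{D}^{(a)}(E)+1$. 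Indeed, $E$ is an eigenvalue for $\gamma=m_{a}(E)$ but not a Dirichlet eigenvalue, so it must be $\lambda_{j+1}(\gamma)$ with $j=n_{D}^{(a)}(E)$.
\end{itemize}
With these, your identity holds up to an $O(1)$ error. That error comes only from the boundary conditions at the two ends, the extra end decoration in the truncation, and the choice of chain solution.
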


The following is an immediate application of the theorem above to
the Sturmian subshift $\Omega_{\alpha}$:
\begin{cor}
\label{cor:GLT-Sturmian}For a Sturmian decorated $\Z$-graph, the
possible gap labels are given by
\begin{equation}
\mathcal{GL}\left(N_{\Omega_{\alpha}}\right)\subset\set{\frac{\alpha n+m}{L+\alpha\ell_{1}+\left(1-\alpha\right)\ell_{2}}}{m,n\in\Z}\cap[0,\infty),\label{eq:AP-GL}
\end{equation}
where $\ell_{1},\ell_{2}$ are the total lengths of the decorations,
and $L$ is the horizontal distance between the decorations.
\end{cor}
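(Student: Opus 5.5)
The corollary is a direct specialization of Theorem \ref{thm:GLT}, so the plan is to identify the two ingredients on the right-hand side of (\ref{eq:GL1}) for the Sturmian subshift and substitute.

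First, compute the normalized length. By (\ref{eq:norm-length}), $\overline{L}\left(\Gamma_{\Omega_{\alpha}}\right)=L+\freq 1\ell_{1}+\freq 0\ell_{0}$, where $\ell_{a}$ is the total length of the decoration $\gra$. Example \ref{exa:Sturmian} gives $\freq 1=\alpha$ and $\freq 0=1-\alpha$. With $\ell_{1}$ and $\ell_{2}$ denoting the total lengths of the decorations attached to the letters $1$ and $0$, respectively (matching the notation of the statement), we get
\[
\overline{L}\left(\Gamma_{\Omega_{\alpha}}\right)=L+\alpha\ell_{1}+\left(1-\alpha\right)\ell_{2}.
\]
This agrees with (\ref{eq:L-Sturm}) in the comb case $\ell_{2}=0$.

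Second, Example \ref{exa:SG-Sturmian} gives the Schwartzman group $\mathfrak{S}_{\Omega_{\alpha}}=\set{\alpha n+m}{m,n\in\Z}$. Since $\Omega_{\alpha}$ is uniquely ergodic (Example \ref{exa:Sturmian}), the hypotheses of Theorem \ref{thm:GLT} hold for any choice of decorations. Inserting the two computations into (\ref{eq:GL1}) gives exactly (\ref{eq:AP-GL}). Intersecting with $[0,\infty)$ passes through unchanged, because $\overline{L}>0$.

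The only point that needs care is bookkeeping. One must make sure that the labelling $\ell_{1},\ell_{2}$ in the statement matches the letters $1,0$ with frequencies $\alpha,1-\alpha$. If the labelling were reversed, one would use the symmetry $\alpha\mapsto1-\alpha$: it maps $\mathfrak{S}_{\Omega_{\alpha}}$ to itself, since $(1-\alpha)n+m=\alpha(-n)+(m+n)$. So the resulting set is the same either way. There is no real obstacle beyond this.
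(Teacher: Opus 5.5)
Your proof is correct and follows the paper's route: the paper presents the corollary as an immediate application of Theorem~\ref{thm:GLT}, substituting $\overline{L}(\Gamma_{\Omega_\alpha})=L+\alpha\ell_1+(1-\alpha)\ell_2$ and $\mathfrak{S}_{\Omega_\alpha}=\set{\alpha n+m}{m,n\in\Z}$ from Example~\ref{exa:SG-Sturmian}. One correction to your last paragraph: the symmetry $\alpha\mapsto1-\alpha$ fixes the numerator group but not the denominator, so with the reversed assignment Theorem~\ref{thm:GLT} gives the generally different set $\frac{1}{L+(1-\alpha)\ell_1+\alpha\ell_2}\mathfrak{S}_{\Omega_\alpha}$; (\ref{eq:AP-GL}) must therefore be read with $\ell_1$ attached to the letter $1$ of frequency $\alpha$, exactly as in your main argument.
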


Our next main result is a GLT for discrete decorated $\Z$-graphs:
\begin{thm}
\label{thm:Discrete-GLT} Let $\left(\Omega,T\right)$ be a uniquely
ergodic subshift, with an associated family of discrete decorated
$\Z$-graphs $G_{\Omega}$, equipped with the normalized discrete
Laplacian. Then
\begin{equation}
\mathcal{GL}\left(N_{\Omega}^{\Delta}\right)\subset\frac{1}{\overline{V}\left(G_{\Omega}\right)}\S\cap\left[0,1\right].\label{eq:DGL}
\end{equation}
\end{thm}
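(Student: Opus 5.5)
The plan is to reduce the discrete statement to the metric one (Theorem~\ref{thm:GLT}) via the classical correspondence between the normalized discrete Laplacian and the equilateral Kirchhoff Laplacian (von Below's formula). Given $G_{\Omega}$, build the metric family $\Gamma_{\Omega}$ by giving every edge of every $G_{\omega}$ length $1$. This is itself a metric decorated $\Z$-graph family in the sense of Subsection~\ref{subsec:Comb-graphs}, with $L=1$ and decorations $\Gamma_{a}$ equal to $G_{a}$ with unit edges. Hence Theorem~\ref{thm:GLT} applies and gives
\[
\mathcal{GL}\left(N_{\Omega}\right)\subset\frac{1}{\overline{L}}\S,\qquad \overline{L}=1+\sum_{a}\freq a\left|\E_{G_{a}}\right|.
\]

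\textbf{Finite-volume correspondence.} On the truncations, use von Below's correspondence for compact equilateral graphs with Kirchhoff conditions. For $k\in(0,\pi)$, $k^{2}$ is an eigenvalue of $\left.H_{\omega}\right|_{[0,n]}$ if and only if $\lambda=1-\cos k$ is an eigenvalue of $\left.\Delta_{\omega}\right|_{[0,n]}$, with equal multiplicities; also $k=0$ corresponds to $\lambda=0$, both simple by connectedness. The map $k\mapsto1-\cos k$ is an increasing bijection $[0,\pi)\to[0,2)$, and no Dirichlet eigenvalues ($\sin k=0$) appear in this range. So for $\lambda\in[0,2)$ and $k=\arccos(1-\lambda)$,
\[
\#\{\text{eigenvalues of }\left.\Delta_{\omega}\right|_{[0,n]}\le\lambda\}=\#\{\text{eigenvalues of }\left.H_{\omega}\right|_{[0,n]}\le k^{2}\}.
\]
Divide by $|\V_{\dpa}|$, respectively by $|\pa|=|\E_{\dpa}|$, and let $n\to\infty$. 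Unique ergodicity gives $|\V_{\dpa}|/n\to\overline{V}$ and $|\E_{\dpa}|/n\to\overline{L}$; the boundary edges contribute only $O(1)$. Together with Proposition~\ref{prop:IDS-existence} and its discrete analogue, this yields
\[
N_{\Omega}^{\Delta}(\lambda)=\frac{\overline{L}}{\overline{V}}\,N_{\Omega}\left(\arccos(1-\lambda)^{2}\right)\qquad\text{for }\lambda\in[0,2).
\]

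\textbf{Transfer of gaps.} Next I need that a gap of $\spec{\Delta_{\Omega}}$ inside $(0,2)$ maps to a gap of $\spec{H_{\Omega}}$ inside $(0,\pi^{2})$. I would argue this directly from the identity above. On a discrete gap $N^{\Delta}_{\Omega}$ is constant, so $N_{\Omega}$ is constant on the image interval. The IDS is strictly increasing across the almost-sure spectrum, since the density of states measure has support equal to the spectrum. Hence the image interval misses $\spec{H_{\Omega}}$, and the infinite-volume version of von Below's correspondence is not needed. The metric label $N_{\Omega}\in\S/\overline{L}$ then gives $N^{\Delta}_{\Omega}\in\S/\overline{V}$.

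\textbf{Remaining gaps.} A gap below $0$ carries label $0\in\S$. A gap above $\max\spec{\Delta_{\Omega}}$ (which is $\le2$) carries label $1$. To see that $1\in\S/\overline{V}$, I would show $\overline{V}=\sum_{a}\freq a|\V_{G_{a}}|\in\S$. Each $\freq a$ is the $\mu$-measure of a cylinder set, and the Schwartzman group contains such measures: take the map on $X_{\Omega}$ winding once over $[\omega,t]$, $t\in[0,1]$, when $\omega(0)=a$ and constant otherwise. This map is continuous because cylinders are clopen. Since all labels are values of an IDS normalized by vertex count, they also lie in $[0,1]$.

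\textbf{Main obstacle.} The step needing most care is the finite-volume correspondence: checking multiplicities at $k=0$, and that degree-one vertices and the cut vertices of the truncation introduce no extra eigenvalues in $[0,\pi^{2})$. I would handle these by the explicit edgewise solution $f|_{e}(x)=\frac{f(u)\sin(k(1-x))+f(v)\sin(kx)}{\sin k}$, which is well defined precisely because $\sin k\neq0$ on $(0,\pi)$.
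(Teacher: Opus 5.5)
Your proposal is correct and follows essentially the paper's route: pass to the unit-length metric family, relate the two IDS through von Below's correspondence (as in Proposition~\ref{prop:counting-functions}), invoke Theorem~\ref{thm:GLT}, and use $\overline{V}\in\S$. Your variations streamline the argument rather than change it:
\begin{itemize}
\item Working only in the first band $k\in[0,\pi)$ removes the parity cases and the continuity hypothesis.
\item Transferring gaps via strict increase of the IDS on the spectrum replaces the infinite-volume correspondence. That strict increase is not proved in the paper, but it follows from the trace formula of Proposition~\ref{prop:PS-trace-formula}.
\item Your explicit winding map proves $\freq a\in\S$ without citing \cite{DamFil_otaa23}.
\end{itemize}
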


As we shall see, for some non-generic choices of the edge lengths,
$\spec{H_{\Omega}}$ may contain isolated eigenvalues, corresponding
to jump discontinuities in the IDS. For Sturmian comb graphs (defined
in Example~\ref{exa:Sturmian comb}), these eigenvalues and IDS jumps
are fully characterized:
\begin{thm}
\label{thm:IDS-jumps}Let $\Gamma_{\Omega_{\alpha}}$ be a metric
Sturmian comb graph.

If $E\in\spec{H_{\Omega_{\alpha}}}$ is an eigenvalue then $E$ is
of infinite multiplicity, with compactly supported eigenfunctions.

The resulting jump in the IDS takes one of the following values
\begin{equation}
\Delta N_{\Omega_{\alpha}}\left(E\right)\in\frac{1}{\ell\alpha+L}\left\{ (c_{1}+1)\alpha-1,~-c_{1}\alpha+1,~\alpha\right\} ,\label{eq:IDS-JUMPS}
\end{equation}
where $\ell$ is the length of the decoration, $L$ is the decoration
spacing, and $c_{1}$ is the first digit in the continued fraction
expansion of $\alpha$:
\begin{equation}
\alpha=\frac{1}{c_{1}+\frac{1}{c_{2}+...}}.\label{eq:cf-expansion}
\end{equation}
\end{thm}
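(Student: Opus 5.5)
The plan is to reduce the statement to a combinatorial count of compactly supported eigenfunctions, and to evaluate that count using the return-word structure of Sturmian sequences.

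\textbf{Step 1: infinite multiplicity and compact support.} The comb graphs $\Gamma_{\omega}$ have finite local complexity: finitely many local patches, with frequencies given by unique ergodicity. I would adapt to the metric setting the standard result for such operators, in the spirit of Lenz--Stollmann--Veselić and Klassert--Lenz--Stollmann. That result says two things. First, $E$ is a point of discontinuity of $N_{\Omega}$ if and only if $H_{\omega}$ has compactly supported eigenfunctions at $E$. Second, the jump $\Delta N_{\Omega}(E)$ equals the density of the space they span:
\[
\Delta N_{\Omega}(E)=\lim_{n}\frac{\dim\set{f}{f \text{ a compactly supported eigenfunction supported in }\pa}}{\left|\pa\right|}.
\]
The argument would reuse the approximation in Appendix \ref{sec:IDS-exists}. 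The space of eigenfunctions of $\Ha$ at $E$ differs from the space of compactly supported eigenfunctions inside $\pa$ by $O(1)$ dimensions, because both spaces are controlled by boundary data at the two cut vertices. So any eigenvalue of positive density occurs with infinite multiplicity. Moreover, once one compactly supported eigenfunction exists, minimality of $\Omega_{\alpha}$ makes its supporting patch recur with positive frequency, which gives infinitely many linearly independent translates.

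\textbf{Step 2: classify the compactly supported eigenfunctions.} Write $E=k^{2}$. The comb is a tree, so a compactly supported eigenfunction $f$ must vanish at the extremal chain vertices of its support. Propagating Kirchhoff conditions inward, I would show the support is a finite union of elementary building blocks, according to whether $\sin(kL)$, $\cos(k\ell)$ or $\sin(k\ell)$ vanish:
\begin{itemize}
\item A function vanishing at every chain vertex, built from a pendant edge (which needs $\cos(k\ell)=0$) combined with an adjacent horizontal edge (which needs $\sin(kL)=0$). The derivatives at the base vertex must cancel. This gives one eigenfunction per decorated vertex, so the density is $\alpha$.
\item Functions supported on a maximal horizontal run between two consecutive decorated vertices, closed off by the two pendants. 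Such a run has $c_{1}$ or $c_{1}+1$ horizontal edges (this uses Sturmian return words, taking $\alpha<1/2$; the case $\alpha>1/2$ is handled symmetrically, with $c_{1}=1$). The eigenvalue condition then depends on the run length $jL$, with $j\in\{c_{1},c_{1}+1\}$, through a transfer-matrix computation across $j$ edges.
\end{itemize}
I would compute the local determinant for each block type and check that, for each fixed $E$, only one type contributes independent eigenfunctions.

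\textbf{Step 3: frequencies.} Between consecutive $1$'s in a Sturmian word there are either $c_{1}-1$ or $c_{1}$ zeros. Let $a$ and $b$ be the frequencies of these two return words. They satisfy
\[
a+b=\alpha,\qquad a(c_{1}-1)+bc_{1}=1-\alpha,
\]
so $b=1-c_{1}\alpha$ and $a=(c_{1}+1)\alpha-1$. Each block type contributes one dimension per occurrence of its pattern. The IDS is normalized per unit length, and the length per unit index is $\overline{L}=L+\alpha\ell$ by (\ref{eq:L-Sturm}). Together these give the three values in (\ref{eq:IDS-JUMPS}).

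\textbf{Main obstacle.} I expect the hardest part to be the exhaustiveness of the classification in Step 2. One must rule out, or show linearly dependent on the blocks above, eigenfunctions supported across several consecutive runs. One must also handle coincidences, such as $\sin(kL)=0$ and $\cos(k\ell)=0$ holding simultaneously, without getting a combined count that falls outside the three listed values. To do this I would first try the tree structure: show that a compactly supported eigenfunction can be decomposed uniquely at chain vertices where it vanishes. Then a dimension count via transfer matrices along the chain should show that multi-run supports contribute nothing new.
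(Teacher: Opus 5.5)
Your outline has the same architecture as the paper's proof: a Klassert--Lenz--Stollmann type criterion, a classification of compactly supported eigenfunctions via runs between consecutive teeth, return-word frequencies, and normalization by $L+\alpha\ell$. Your Step~3 is a genuine shortcut. Once you know that only the return words with $c_1-1$ and $c_1$ zeros occur, the two counting identities give the frequencies without the paper's explicit computation of the rotation intervals $I_W$; that computation, in exchange, proves the two-return-word fact and the frequencies in one stroke.

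The gap is Step~2, which you leave as the ``main obstacle''. Exhaustiveness is never established, and the planned case analysis (transfer matrices, local determinants, a $\sin(k\ell)=0$ case, multi-run supports) misses the observation that makes it immediate. Moreover, your first building block is not an eigenfunction. For a pendant plus one adjacent horizontal edge, the far endpoint $w$ has $f(w)=0$ but $f'(w)\neq0$. Kirchhoff at $w$ then forces either the sine to continue along the run (if $w$ is undecorated, which is always the case for $\alpha<1/2$) or a second nonzero tooth. The correct block is always tooth--run--tooth, one per run; your density $\alpha$ comes out right only because runs and teeth have the same density.

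The missing idea is as follows. Let $v$ be the rightmost chain vertex such that $f$ is not identically zero on the edges at $v$. Then $f$ vanishes on the right edge, so $f(v)=0$. If $f$ also vanished on the tooth at $v$ (or there were no tooth), Kirchhoff would give zero Cauchy data on the left edge, a contradiction. Hence $v$ is decorated, and the tooth carries $A\cos(k(\ell-x))$ (with $x$ the distance from $v$), where $A\neq0$ and $A\cos(k\ell)=0$. So $E=k^2>0$ and $\cos(k\ell)=0$, which also rules out your $\sin(k\ell)=0$ case.

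But then \emph{every} tooth solution vanishes at its base, so \emph{every} $E$-eigenfunction, on $\Gamma_\omega$ or on a truncation, vanishes at all tooth bases. The eigenspace therefore decomposes as a direct sum over runs. On a run of $j$ edges the interior vertices have degree two, so $f=B\sin(kx)$ there; this is nonzero only if $kjL\in\pi\mathbb{N}$, and the adjacent tooth amplitudes are then fixed by Kirchhoff.

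This gives exhaustiveness and resolves your coincidence worry at once. The three jump values correspond to admissible runs of $c_1+1$ edges only, of $c_1$ edges only, or of both, and ``both'' occurs exactly when $kL\in\pi\mathbb{N}$. It also substantiates your $O(1)$ comparison in Step~1. Away from $\cos(k\ell)=0$ the teeth act as finite point interactions, so a truncation has at most a one-dimensional $E$-eigenspace. When $\cos(k\ell)=0$, only the two boundary runs of a truncation satisfy different conditions.
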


A more detailed version of Theorem \ref{thm:IDS-jumps}, which also
contains the explicit expressions for the eigenvalues, is presented
as Theorem~\ref{thm: IDS jumps Sturmian combs}.

\subsection*{Acknowledgments}

We would like to thank Siegfried Beckus, David Damanik, Jake Fillman,
Johannes Kellendonk, Daniel Lenz, and Jan Mazá\v{c} for extremely
helpful discussions and feedback. We gratefully acknowledge the hospitality
of the Institute of Mathematics at the University of Potsdam, where
some of this research took place. The research for this paper was
partially conducted at the Israel Institute for Advanced Studies,
as part of the Research Group Analysis, Geometry, and Spectral Theory
of Graphs during 2025-2026. This research was supported by the Israel
Science Foundation (ISF Grant No. 2362/25), and by the United States
- Israel Binational Science Foundation (BSF), grant no. 2024154.

\section{Gap labelling for metric graphs\label{sec:Gap-labeling} - Proof
of theorem~\ref{thm:GLT}}

We begin by introducing the nodal count of metric graphs, which is
later used for the proof of Theorem~\ref{thm:GLT}.

\subsection{The nodal surplus of a tile\label{subsec:The-nodal-surplus}}

Let $\gra$ be a decoration of type $a\in\A$ with base vertex $v_{a}$.
For an energy $E\in\mathbb{R}$, consider the differential equation
on $\gra$,
\begin{equation}
-\frac{d^{2}}{dx^{2}}f=Ef,\label{eq:E-ODE}
\end{equation}
subject to the Kirchhoff conditions at all vertices of $\gra$, except
for $\va$, where we impose only a continuity condition (\ref{eq:-15-2}),
but no condition on the derivatives. Then for all but a discrete subset
of $E\in\R$, this equation has a unique solution (up to a scalar
multiple), denoted by $\fe$. This discrete set is exactly the spectrum
of the Kirchhoff Laplacian on $\Gamma_{a}$ with a Dirichlet condition
imposed at $v_{a}$ (see e.g., \cite[thm. 2.1 and cor. 2.4]{BanBerSmi_ahp12}).
For $E$ values outside of this discrete set we denote
\begin{equation}
m_{a}\left(E\right):=\sum_{e\in\E_{v_{a}}}\frac{\fe'|_{e}\left(\va\right)}{\fe\left(\va\right)}.\label{eq:m-func}
\end{equation}
We use this to define an (energy dependent) Robin vertex condition
at $\va$:
\begin{align}
 & g|_{e}\left(v_{a}\right)=g|_{e'}\left(v_{a}\right),\quad\quad\forall e,e'\in\E_{v_{a}},\label{eq:m-Robin-1}\\
 & \sum_{e\in\E_{v_{a}}}g'|_{e}\left(\va\right)=m_{a}\left(E\right)g\left(\va\right).\label{eq:Robin2}
\end{align}
By construction, $\left(E,\fe\right)$ is an eigenpair of $-\frac{d^{2}}{dx^{2}}$
on $\gra$, with Kirchhoff condition imposed at all vertices except
for $\va$, where the Robin condition (\ref{eq:m-Robin-1}), (\ref{eq:Robin2})
is imposed. We denote the resulting operator by $\left.H\right|_{\Gamma_{a}}$,
keeping in mind that this operator depends on $E$ (but do not indicate
this in the notation for brevity).

Denoting the (non-normalized) spectral counting function of $\left.H\right|_{\Gamma_{a}}$
by
\begin{equation}
n^{\left(a\right)}\left(E\right):=\#\left\{ \lambda\in\spec{\left.H\right|_{\Gamma_{a}}}:\lambda\le E\right\} ,\label{eq:counting-1}
\end{equation}
we define the nodal surplus of $E$ in $\gra$ by
\begin{equation}
\sigma^{\left(a\right)}\left(E\right):=\#\left\{ \text{zeros of \ensuremath{f_{E}} in \ensuremath{\gra}}\right\} -\left(n^{\left(a\right)}\left(E\right)-1\right).\label{eq:surplus}
\end{equation}
Outside a discrete set of $E$ values, the eigenfunction $f_{E}$
does not vanish at any vertex of $\gra$ \cite[cor. 2.4]{BanBerSmi_ahp12}
(This discrete set contains the discrete set mentioned above, and
in general might be larger). Restricting $E$ to be outside the mentioned
set, and using the unique continuation of $f_{E}$ at every edge of
$\gra$, we conclude that the zero set of $f_{E}$ is discrete. Hence
the surplus $\sigma^{\left(a\right)}\left(E\right)$ is well-defined
for all such $E$ values.

The nodal surplus has been extensively studied for quantum graphs
starting from \cite{GnuSmiWeb_wrm04}. For additional background see
\cite{AloBanBer_cmp18,AloBanBer_exp22,Alon_PhDThesis,BanBerSmi_ahp12,Ber_cmp08}.

\subsection{Right propagation along $\Gamma_{\omega}$}

For the proof in the next subsection we need to establish a notion
of propagation along $\Gamma_{\omega}$. Fix $\omega\in\Omega$, and
set the origin $o\left(\Gamma_{\omega}\right)$ to be the vertex with
$0$ coordinate of the $\Z$-graph (which is identified with the base
vertex of $\Gamma_{\omega(0)}$).

\begin{figure}
\includegraphics[scale=0.55]{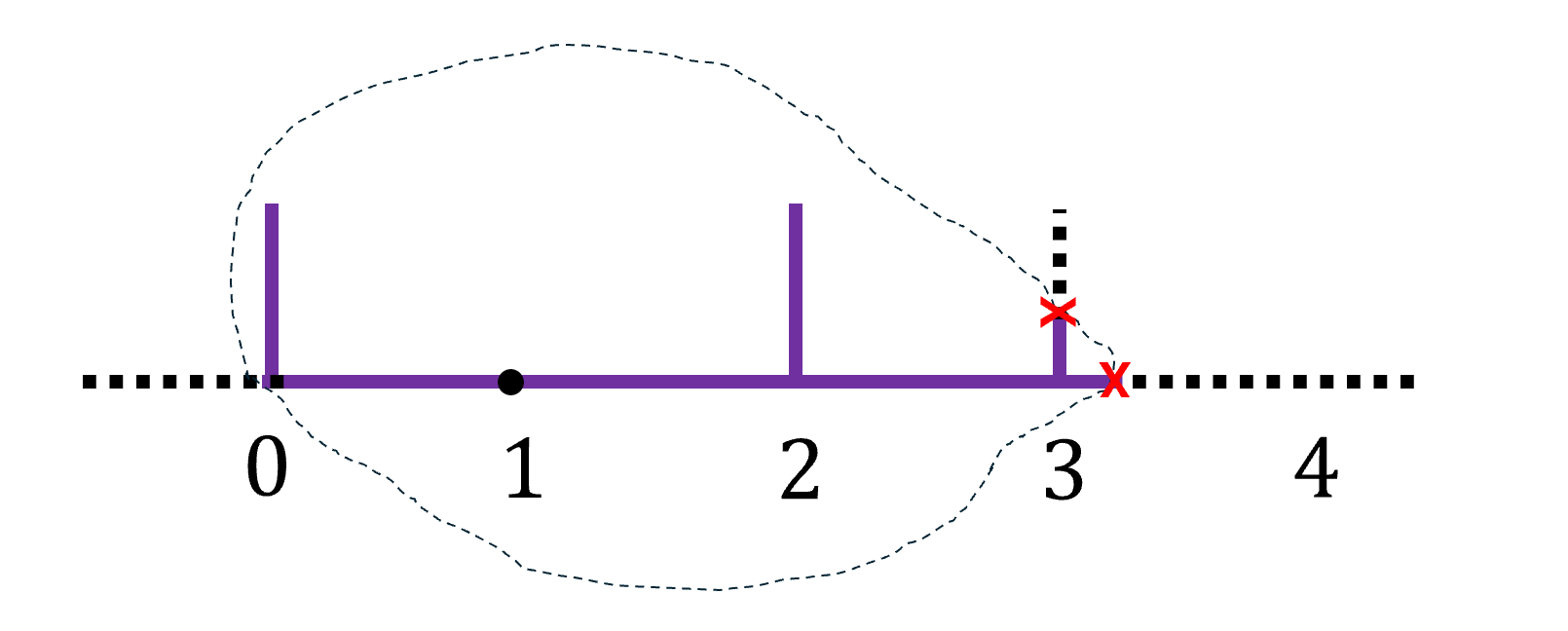}

\caption{The compact graph $\Gamma_{\omega}\left(t\right)$ (\ref{eq: truncated-graph}).
Here, $s_{\omega}\left(t\right)$ (\ref{eq: spheres def}) consists
of two points, marked by x signs. \label{fig: HorizontalCovering}}
\end{figure}

Assume first that for all $a\in\A$ the total metric length of $\gra$
is smaller than $L$ (the horizontal distance between adjacent decorations).
Under this assumption we set for $t\geq0$, 
\begin{equation}
s_{\omega}\left(t\right):=\set{x\in\Gamma_{\omega}^{+}}{d\left(o\left(\Gamma_{\omega}\right),x\right)=tL},\label{eq: spheres def}
\end{equation}
see Figure \ref{fig: HorizontalCovering}, where $\Gamma_{\omega}^{+}$
is the right part of $\Gamma_{\omega}$, i.e., the half positive ray,
$\left[0,\infty\right)$, together with all decorations attached to
it. In particular, we note that $s_{\omega}\left(0\right)=\left\{ o\left(\Gamma_{\omega}\right)\right\} $
and for $k\in\N$, $s_{\omega}\left(k\right)=\left\{ kL\right\} $.
We wish to maintain the property $s_{\omega}\left(k\right)=\left\{ kL\right\} $
at integer radii even when some decorations $\gra$ have total length
exceeding $L$. To do so, if needed, we can rescale the metric inside
each decoration $\Gamma_{a}$ used in (\ref{eq: spheres def}) by
a factor of $\frac{L}{2\left|\gra\right|}$, while leaving the metric
unchanged at the horizontal $\Z$-graph.

\subsection{Proof of Theorem \ref{thm:GLT}}

The proof proceeds in three main steps: first, for each gap of $H_{\omega}$
we define an appropriate function on the suspension space $X_{\Omega}$
for which the Schwartzman homomorphism will be evaluated. Second,
we relate this function to the nodal count of a generalized eigenfunction
and finally, we combine these results to express the IDS value at
the gap in terms of the Schwartzman group.

\subsubsection*{\textbf{Step one: Defining an appropriate function on the suspension.}}

Fix $E\in\R\backslash\spec{H_{\Omega}}$. For $\omega\in\Omega$,
consider the differential equation on $\Gamma_{\omega}^{+}$, 
\begin{equation}
-\frac{d^{2}}{dx^{2}}u\left(x\right)=Eu\left(x\right),\label{eq:E-ODE-1}
\end{equation}
with the Kirchhoff condition imposed at all vertices, except the origin
where no boundary condition is imposed. Since $E\notin\spec{H_{\omega}}$,
this equation has a unique solution (up to a scalar multiple) which
is in $L^{2}(\Gamma_{\omega}^{+})$, denoted $\fwe$ (the proof is
similar to that of \cite[lem. 9.7]{Teschl2014}). In addition, the
uniqueness of the solution guarantees that, up to normalization, $\left.f_{T\omega,E}\right|_{T\Gamma_{\omega}^{+}}=\left.T\fwe\right|_{T\Gamma_{\omega}^{+}}$,
where $T$ acts on $\fwe$ as in (\ref{eq:Koopman}). Each solution
$f_{\omega,E}$ may be also extended to the left (i.e., to $\Gamma_{\omega}\backslash\Gamma_{\omega}^{+}$)
by solving the ODE (\ref{eq:E-ODE-1}), starting from the initial
conditions given by the value and derivative of $\fwe$. We hence
may adopt the notation $f_{\omega,E}$ for a function on the whole
$\Gamma_{\omega}$ and we get that $\left.f_{T\omega,E}\right|_{\Gamma_{\omega}^{+}}\in L^{2}(\Gamma_{\omega}^{+})$
and $f_{T\omega,E}=T\fwe$.

We use the function $f_{\omega,E}$ to define a function from the
suspension space to the one-dimensional torus. Consider the following
form of the Cayley transform, 
\begin{equation}
\mathop{C}(t)=\frac{t+\rmi}{t-\rmi},\label{eq:Cayley}
\end{equation}
which maps the left to right oriented real line $\overline{\R}$ (augmented
with $\pm\infty$) onto the clockwise oriented unit circle. Using
this we define the following function on the suspension space:

\begin{align}
 & \phi:X_{\Omega}\rightarrow\T,\label{eq:phi1}\\
 & \phi\left(\omega,t\right)=\frac{1}{2\pi}\mathop{Arg}\left[\mathop{C}\left(\sum_{x\in s_{\omega}\left(t\right)}\frac{\fwe'\left(x\right)}{\fwe\left(x\right)}\right)\right],\label{eq:phi2}
\end{align}
where $s_{\omega}\left(t\right)$ is given in (\ref{eq: spheres def}),
and $\mathop{Arg}$ is the argument function mapping complex numbers
onto the one dimensional torus $\left[0,2\pi\right)$. In the sum
over $x\in s_{\omega}\left(t\right)$ above, a special emphasis should
be given to the case when $x$ is a vertex. The derivative $\fwe'\left(x\right)$
at a vertex $x$ is defined by parameterizing the elements of $s_{\omega}\left(t\right)$
as $x(t)$ and setting $\fwe'\left(x(t)\right):=\lim_{\tilde{t}\rightarrow t^{-}}\fwe'\left(x\left(\tilde{t}\right)\right)$.
For $t=0$ and $x=o(\Gamma)$, we similarly set $\fwe'\left(x\right):=\lim_{\tilde{x}\rightarrow x^{-}}\fwe'\left(\tilde{x}\right)$.
The function $\phi$ may be considered as a generalized Pr\"ufer
angle. Straightforward computation shows that $\phi$ is well-defined
on $X_{\Omega}$, as
\begin{align}
\phi\left(\omega,1\right) & =\frac{1}{2\pi}\mathop{Arg}\left[\mathop{C}\left(\sum_{x\in s_{\omega}\left(1\right)}\frac{\fwe'\left(x\right)}{\fwe\left(x\right)}\right)\right]\label{eq:phi-invariance}\\
 & =\frac{1}{2\pi}\mathop{Arg}\left[\mathop{C}\left(\sum_{x\in s_{T\omega}\left(0\right)}\frac{f_{T\omega,E}'\left(x\right)}{f_{T\omega,E}\left(x\right)}\right)\right]=\phi\left(T\omega,0\right),\nonumber 
\end{align}
where we have used that $f_{T\omega,E}=T\fwe$ and also the equivalence
between $s_{\omega}\left(1\right)$ in $\Gamma_{\omega}$ and $s_{T\omega}\left(0\right)$
in $\Gamma_{T\omega}$ (both consist of a single point, which is the
same up to the isomorphism between $\Gamma_{\omega}$ and $\Gamma_{T\omega}$).
We further argue that $\phi$ is continuous on $X_{\Omega}$. First,
we show that $\phi(\omega,t)$ is continuous in $\omega\in\Omega$
using the Titchmarsh-Weyl $m$-function of the half infinite graph
$\Gamma_{\omega}^{+}$, denoted $m_{\omega}^{+}(z)$. . The values
$\fwe\left(x\right)$ and $\fwe'\left(x\right)$ depend continuously
on the Robin boundary condition $\frac{\fwe'\left(o(\Gamma_{w}^{+})\right)}{\fwe\left(o(\Gamma_{w}^{+})\right)}$
at the origin, as solutions of the ODE (\ref{eq:E-ODE-1}). Therefore
the RHS of (\ref{eq:phi2}) depends continuously on $m_{\omega}^{+}(E)=\frac{\fwe'\left(o(\Gamma_{w}^{+})\right)}{\fwe\left(o(\Gamma_{w}^{+})\right)}$,
and since the $m$-function $m_{\omega}^{+}(E)$ is continuous in
$\omega$ (see e.g., \cite{BanSof_prep24a}), we conclude that $\phi(\omega,t)$
is continuous in $\omega$. Next, we show that $\phi(\omega,t)$ is
also continuous in $t$. Clearly the expression $\sum_{x\in s_{\omega}\left(t\right)}\frac{\fwe'\left(x\right)}{\fwe\left(x\right)}$
is continuous in $t$, when $s_{\omega}\left(t\right)$ does not contain
any vertex of $\Gamma_{\omega}^{+}$. In addition, the Kirchhoff vertex
conditions (\ref{eq:-15-2}),(\ref{eq:-16-2}) ensure the continuity
of $\sum_{x\in s_{\omega}\left(t\right)}\frac{\fwe'\left(x\right)}{\fwe\left(x\right)}$
in $t$ also when $s_{\omega}\left(t\right)$ contains a vertex. Overall,
we conclude that the function $\phi$ is well-defined and continuous
on $X_{\Omega}$.

\subsubsection*{\textbf{Step 2: Expressing $\phi\left(\omega,t\right)$ using the
nodal count of $\protect\fwe$.}}

Having defined $\phi:X_{\Omega}\rightarrow\T$ in (\ref{eq:phi2})
we wish to apply the Schwartzman homomorphism to it via (\ref{eq:SH2}).
At this point, fix $\omega\in\Omega$ to be in the full measure set
for which (\ref{eq:SH2}) holds. Define
\begin{equation}
\Gamma_{\omega}\left(t\right):=\set{x\in\Gamma_{\omega}^{+}}{d\left(o\left(\Gamma_{\omega}\right),x\right)\leq tL},\label{eq: truncated-graph}
\end{equation}
see Figure \ref{fig: HorizontalCovering}, and note that $s_{\omega}\left(t\right)$
forms part of the boundary of $\Gamma_{\omega}\left(t\right)$. By
(\ref{eq:phi2}) the function $\phi\left(\omega,t\right)$ equals
$0\in\T$ precisely when $\fwe\left(x\right)=0$ for some $x\in s_{\omega}\left(t\right)$.
With this observation we use the values of $\phi\left(\omega,t\right)$
(or more precisely its lift) to count the zeros of $\fwe$. To do
so, recall the notation $\phi_{(\omega,0)}(t):=\phi(\tau^{t}(\omega,0))$
and $\widetilde{\phi}_{(\omega,0)}(t)$ for its lift (see Section~\ref{subsec: Schwartzman-group}).
With this notation, the number of zeros of $\fwe$ in $\Gamma_{\omega}\left(t\right)$
is equal to the number of times that the function $\widetilde{\phi}_{(\omega,0)}$
intersects $0\text{ mod }1$ in the interval $\left[0,t\right]$.
We use this observation to connect between the (average) zero count
of $\fwe$ and the value of the Schwarzman homomorphism $S_{\Omega}\left(\left[\phi\right]\right)$.
Explicitly, using the notation
\begin{equation}
\zc:=\#\set{x\in\Gamma_{\omega}\left(t\right)}{\fwe(x)=0},\label{eq:Zwt}
\end{equation}
 we have 
\begin{equation}
\zc=\left\lfloor \widetilde{\phi}_{(\omega,0)}(t)\right\rfloor ,\label{eq:Zwt-1}
\end{equation}
where $\left\lfloor \phantom{x}\right\rfloor $ denotes the floor
function. Therefore, by (\ref{eq:SH2}),

\begin{equation}
S_{\Omega}\left(\left[\phi\right]\right)=\lim_{t\rightarrow\infty}\frac{\tilde{\phi}_{\left(\omega,0\right)}\left(t\right)}{t}=\lim_{t\rightarrow\infty}\frac{1}{t}\left\lfloor \widetilde{\phi}_{(\omega,0)}(t)\right\rfloor =\lim_{t\rightarrow\infty}\frac{1}{t}\zc,\label{eq: Schwarz equals zero count}
\end{equation}
where in the first equality we used that $\omega\in\Omega$ is in
the full measure set for which (\ref{eq:SH2}) holds.

Having this connection between the Schwartzman homomorphism and the
nodal count, we analyze $Z_{\omega,t}$. In what follows we decompose
the total nodal count on $\Gamma_{\omega}(t)$ via the nodal count
of its subgraphs: the decorations, and the horizontal path. Outside
a discrete set of energies $E$, the solution to the ODE (\ref{eq:E-ODE})
on each decoration $\Gamma_{a}$ is unique up to scalar multiple (as
discussed in Subsection~\ref{subsec:The-nodal-surplus}). Hence,
the nodal count on each decoration of a given type does not depend
on the location of this decoration within $\Gamma_{\omega}(t)$. Denoting
this nodal count function by $Z^{\left(a\right)}(E)$, we write
\begin{equation}
\zc=\zch+\sum_{a\in\mathcal{A}}\ct t\zca,\label{eq:nodal-split}
\end{equation}
where $\zch$ is the nodal count function of $\fwe$ on the path graph
$\left[0,tL\right]$ (which is a subgraph of $\Gamma_{\omega}(t)$),
and we extend the definition of the letter counting function (\ref{eq:counting})
to non-integer $t$ values by setting $\ct t:=\ct{\left\lfloor t\right\rfloor }$
to be the number of decorations of type $a$ in $\Gamma_{\omega}(t)$.
Note that (\ref{eq:nodal-split}) is an equality between functions
in $E$, but for brevity we omit the $E$-dependence. As already mentioned,
these functions are well-defined up to a discrete set of $E$ values.
We further use the spectral counting functions (\ref{eq:counting-1})
and nodal surplus functions (\ref{eq:surplus}) to write
\begin{align}
\zc= & \zch+\sum_{a\in\mathcal{A}}\ct t\left(n^{\left(a\right)}+\sigma^{\left(a\right)}-1\right),\label{eq:total-zeros}
\end{align}

We next express the nodal counting functions $\zc$ and $\zch$ through
spectral counting functions of suitable operators. Towards this, we
define the corresponding operators. First, consider the restriction
of $H_{\omega}$ to the finite graph $\Gamma_{\omega}(t)$. At the
vertices $u\in s_{\omega}(t)\cup o(\Gamma_{\omega})$ we impose the
Robin condition
\begin{equation}
\frac{f'\left(u\right)}{f\left(u\right)}=\frac{\fwe'\left(u\right)}{\fwe\left(u\right)},\label{eq:Robin-boundaries}
\end{equation}
and at all other vertices of $\Gamma_{\omega}(t)$ we impose the Neumann-Kirchhoff
vertex conditions as in $H_{\omega}$. We naturally denote the resulting
operator $\left.H_{\omega}\right|_{\Gamma_{\omega}(t)}$. We describe
now an operator associated with the horizontal subgraph $\left[0,tL\right]$.
Let $v_{m}$ be an interior vertex of $\left[0,tL\right]$, which
is positioned at $mL$, where $m\in\Z\cap(0,t)$. Denote its two neighboring
edges by $e_{m}^{\pm}$. We impose at the vertex $v_{m}$ the Robin-type
conditions
\begin{align}
 & f|_{e_{v}^{+}}\left(v_{m}\right)=f|_{e_{v}^{-}}\left(v_{m}\right)=:f\left(v_{m}\right),\label{eq:Robin-internal-1}\\
 & f'|_{e_{v}^{+}}\left(v_{m}\right)+f'|_{e_{v}^{-}}\left(v_{m}\right)=-m_{\omega(m)}\left(E\right)f\left(v_{m}\right),\label{eq:Robin-internal-2}
\end{align}
where the Robin parameter $m_{\omega(m)}\left(E\right)$ is as in
(\ref{eq:m-func}), and takes into account that in $\Gamma_{\omega}(t)$
the decoration $\Gamma_{\omega(m)}$ is glued to $v_{m}$. At the
boundary vertices $u\in\{o(\Gamma_{\omega}),tL\}$ we impose the same
Robin conditions (\ref{eq:Robin-boundaries}) as were imposed for
$\left.H_{\omega}\right|_{\Gamma_{\omega}(t)}$. Overall these vertex
conditions render the one-dimensional Laplacian on $[0,tL]$ a self-adjoint
operator, which we denote by $\left.H_{\omega}\right|_{[0,tL]}$.
These particular choices of vertex conditions guarantee that $\left(E,\left.\fwe\right|_{[0,tL]}\right)$
is an eigenpair of $\left.H_{\omega}\right|_{[0,tL]}$ and $\left(E,\left.\fwe\right|_{\Gamma_{\omega}(t)}\right)$
is an eigenpair of $\left.H_{\omega}\right|_{\Gamma_{\omega}(t)}$.

Denoting the spectral counting function of $\left.H_{\omega}\right|_{[0,tL]}$
by 
\begin{equation}
\sch:=\#\set{\lambda\in\mathrm{Spec}\left(\left.H_{\omega}\right|_{[0,tL]}\right)}{\lambda\leq E},\label{eq:n-horiz}
\end{equation}
Sturm's oscillation theorem (see \cite{Ber_cmp08} and \cite{Sch_wrcm06})
yields
\begin{equation}
\sch(E)=\zch(E)+1.\label{eq:counting=00003Dzeros}
\end{equation}
 Substituting this in (\ref{eq:total-zeros}) gives 
\begin{equation}
\zc=\sch-1+\sum_{a\in\mathcal{A}}\ct t\left(n^{\left(a\right)}+\sigma^{\left(a\right)}-1\right).\label{eq: nodal_count_via_spectral_count}
\end{equation}

We next relate the spectral counting functions of the three operators
$\left.H_{\omega}\right|_{\Gamma_{\omega}(t)}$, $\left.H_{\omega}\right|_{[0,tL]}$,
$\left.H_{\omega}\right|_{\Gamma_{a}}$ discussed above (the operator
$\left.H_{\omega}\right|_{\Gamma_{a}}$ was presented in Section \ref{subsec:The-nodal-surplus},
where it was denoted by $\left.H\right|_{\Gamma_{a}}$).
\begin{lem}
\label{lem:Counting-lemma}Let $E\notin\spec{H_{\omega}}$. Assume
that for all $a\in\A$, the spectrum of the Kirchhoff Laplacian on
$\Gamma_{a}$ with Dirichlet condition imposed at $v_{a}$ does not
contain $E$. Denote the spectral counting functions of $\left.H_{\omega}\right|_{\Gamma_{\omega}(t)}$,
$\left.H_{\omega}\right|_{[0,tL]}$ and $\left.H_{\omega}\right|_{\Gamma_{a}}$
by $\sc$ , $\sch$ and $\sca$ respectively. Then,
\begin{equation}
\sc\left(E\right)=\sch\left(E\right)+\sum_{a\in\mathcal{A}}\ct t\left(\sca\left(E\right)-1\right).\label{eq:counting-lemma}
\end{equation}
\end{lem}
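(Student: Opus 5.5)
The plan is to obtain the identity by decoupling $\Gamma_{\omega}(t)$ into the horizontal segment $[0,tL]$ and the decorations $\Gamma_{\omega(m)}$, $m\in\Z\cap[0,t]$. We compare quadratic forms, or equivalently count eigenvalues, across this decoupling. The natural tool is a Dirichlet-to-Neumann / Schur-complement (Birman--Schwinger type) argument at the gluing vertices $v_m$.

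\textbf{Step 1: pass through a Dirichlet decoupling.} Impose additionally a Dirichlet condition at every gluing vertex $v_m$ ($m$ in the interior, and at the boundary vertex when a decoration is attached there). By interlacing, each such rank-one change shifts the counting function by at most one. The decoupled operator splits into a direct sum of three kinds of pieces: the intervals $[(m-1)L,mL]$ with Dirichlet ends, or with the fixed Robin data at $0,tL$; the decorations $\Gamma_a$ with Dirichlet at $v_a$; and nothing else. This alone gives only inequalities, so I would instead use an exact formula.

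\textbf{Step 2: exact counting via the Dirichlet-to-Neumann matrix.} For a compact graph with a set $B$ of vertices, the following holds. Let $E$ be neither an eigenvalue of the operator nor of its Dirichlet-at-$B$ restriction. Then
\[
\#\{\lambda\le E\}=\#\{\lambda^{D}\le E\}+\#\{\text{negative eigenvalues of } -M(E)\},
\]
up to sign conventions, where $M(E)$ is the DtN matrix on $B$. This follows from the monotonicity of $M(E)$ in $E$ together with the Morse-index (Schur complement) formula. The DtN map of $\Gamma_{\omega}(t)$ at $B=\{v_m\}$ is the sum of the DtN map of the horizontal part and the diagonal matrix $\mathrm{diag}(m_{\omega(m)}(E))$ from the decorations.

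Apply the formula three times:
\begin{itemize}
\item to $\Gamma_{\omega}(t)$ with $B=\{v_m\}$;
\item to $[0,tL]$ with the Robin conditions (\ref{eq:Robin-internal-1})--(\ref{eq:Robin-internal-2}), where the Robin term is precisely $\mathrm{diag}(m_{\omega(m)}(E))$;
\item to each $\Gamma_a$ with $B=\{v_a\}$ and the Robin condition (\ref{eq:Robin2}).
\end{itemize}
In the first two cases the total DtN matrix, horizontal plus decoration, is the same matrix. Its Morse indices therefore agree, and so do the Dirichlet-decoupled horizontal parts. Subtracting, the difference $\sc-\sch$ equals $\sum_m \#\{\text{Dirichlet eigenvalues of } \Gamma_{\omega(m)} \text{ at } v \le E\}$.

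\textbf{Step 3: relate the decoration terms.} The third application involves the scalar $1\times1$ Robin problem on $\Gamma_a$ with parameter $m_a(E)$, for which $E$ itself is an eigenvalue. To handle this I would count with a limit $E\pm0$, or compare directly. The one-dimensional DtN function $E'\mapsto m_a(E')-m_a(E)$ vanishes at $E'=E$ and is monotone. So the Robin operator has exactly one more eigenvalue $\le E$ than the Dirichlet operator, namely $E$ itself. This gives $\sca(E)-1=\#\{\text{Dirichlet eigenvalues of }\Gamma_a \le E\}$. Grouping by letter type, $\ct t$ decorations of type $a$, yields (\ref{eq:counting-lemma}).

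\textbf{Main obstacle.} The main obstacle is Step 2's sign and degeneracy bookkeeping. Two issues need care:
\begin{itemize}
\item $E$ is an eigenvalue of both $\left.H_{\omega}\right|_{\Gamma_{\omega}(t)}$ and $\left.H_{\omega}\right|_{[0,tL]}$, so the DtN matrix is singular at $E$.
\item The counts are taken with $\le E$.
\end{itemize}
I would handle both by proving the identity for $E'$ slightly above $E$ with the vertex conditions frozen at their $E$-values. The operators do not change, only the count threshold does. I would then use that the kernel at $E$ is simple in both operators (spanned by $\fwe$ restricted). The hypothesis that $E$ avoids the Dirichlet spectra of all $\Gamma_a$ ensures the decoration DtN functions are finite, so the decoupling is legitimate.
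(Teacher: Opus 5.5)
\textbf{Verdict: different route, viable, but Step 2 has a gap.} The paper deforms the gluing through the monotone family $H_\tau$ and counts how many eigenvalue branches reach the flat branch at $E$ carried by $f_{\omega,E}$. You instead want a static index identity at the single energy $E$. The bookkeeping of your Steps 2--3 is right.

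\textbf{The gap.} The DtN/Morse-index formula needs $E\notin\mathrm{Spec}(D_B)$ for the \emph{whole} Dirichlet-at-$B$ operator. That operator contains the horizontal pieces of $[0,tL]$, and you only check the decorations.
\begin{itemize}
\item For interior segments (Dirichlet at both ends, length $L$) the condition is automatic. The function $\sin(\pi jx/L)$ on the line, extended by zero to the decorations, is a bounded generalized eigenfunction of $H_\omega$, so $(\pi j/L)^2\in\mathrm{Spec}(H_\omega)$. You would still have to say this.
\item For the end piece between the last attachment vertex and $tL$, the condition can fail. The Robin condition at $tL$ is read off from $f_{\omega,E}$, so $E$ is a Dirichlet--Robin eigenvalue of that piece exactly when $f_{\omega,E}$ vanishes at the last attachment vertex. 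The lemma's hypotheses do not exclude this, and then the horizontal DtN matrix does not exist at $E$.
\end{itemize}

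\textbf{The obstacle you flag is not real, and your remedy creates one.} Stated as
\[
\#\{\lambda\le E\}=\#\{\lambda^D\le E\}+\#\{\text{nonnegative eigenvalues of }\Lambda(E)-\Theta\},
\]
the formula already holds at $E$ even when $E$ is an eigenvalue, because the zero eigenvalues of $\Lambda(E)-\Theta$ correspond exactly to the kernel. Shifting the threshold to $E'>E$ with frozen conditions breaks your ``same matrix'' step. Inside $\Gamma_\omega(t)$ the decorations contribute $m_a(E')$, while $H_\omega|_{[0,tL]}$ keeps the fixed strengths $-m_a(E)$. This can be patched by tracking the one near-zero eigenvalue, but it is unnecessary.

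\textbf{The repair.} Split off only the decorations and never decompose the horizontal part.
\begin{itemize}
\item Let $\varphi_a$ solve $-\varphi''=E\varphi$ on $\Gamma_a$, with Kirchhoff conditions away from $v_a$ and $\varphi_a(v_a)=1$. It exists because $E\notin\mathrm{Spec}(D_a)$, where $D_a$ is $\Gamma_a$ with Dirichlet at $v_a$.
\item Write $f|_{\Gamma_{\omega(m)}}=f_0^{(m)}+f(v_m)\varphi_{\omega(m)}$ with $f_0^{(m)}(v_m)=0$.
\item Integration by parts kills the cross terms. It also turns the contribution of $f(v_m)\varphi_{\omega(m)}$ into exactly the $\delta$-coupling of $H_\omega|_{[0,tL]}$ at $v_m$. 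Hence
\[
Q_{\Gamma_\omega(t)}[f]-E\|f\|^{2}=\Big(Q_{[0,tL]}\big[f|_{[0,tL]}\big]-E\big\|f|_{[0,tL]}\big\|^{2}\Big)+\sum_{m}\Big(Q^{D}_{\omega(m)}\big[f_0^{(m)}\big]-E\big\|f_0^{(m)}\big\|^{2}\Big),
\]
along a linear bijection of form domains.
\item Negative indices and null spaces add. This gives $n_{\omega,t}(E)=n^{\mathrm{horiz}}_{\omega,t}(E)+\sum_m n^{D}_{\omega(m)}(E)$ under exactly the lemma's hypotheses, with multiplicities included. Here $n^D_a$ counts eigenvalues of $D_a$.
\item The same splitting on $\Gamma_a$ alone, for the Robin form with parameter $m_a(E)$, leaves only $Q^D_a-E$ plus one extra null vector $\varphi_a$. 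That gives $n^{(a)}(E)=n^D_a(E)+1$, which is your Step 3.
\end{itemize}

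\textbf{Comparison with the paper.} Repaired this way, your argument is more elementary than the paper's. It needs no analytic perturbation theory, no monotonicity of branches, and no multiplicity tracking along a path. It also exhibits the $-1$ directly as the Robin eigenvalue $E$ itself. The paper's interpolation, in exchange, frames the count as a spectral flow.
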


The proof of the lemma involves a continuous interpolation between
the relevant operators. While this is an interesting method, the proof
is somewhat technical and is postponed to Appendix~\ref{sec:Proof-of-counting-lemma-1}.

\begin{proof}[\textbf{\textit{Step 3: Computing the Schwartzman homomorphism of
$\phi$.}}]
 Using Lemma~\ref{lem:Counting-lemma}, Equation~(\ref{eq: nodal_count_via_spectral_count})
gives
\begin{equation}
\zc=\sc-1+\sum_{a\in\mathcal{A}}\ct t\sigma^{\left(a\right)}.\label{eq:counting-lemma-2}
\end{equation}
Now, computing the Schwartzman homomorphism as in (\ref{eq: Schwarz equals zero count})
gives 
\begin{align}
S_{\Omega}\left(\left[\phi\right]\right) & =\lim_{t\rightarrow\infty}\frac{1}{t}\zc(E)\nonumber \\
 & =\lim_{t\rightarrow\infty}\frac{1}{t}\left(\sc\left(E\right)-1+\sum_{a\in\mathcal{A}}\ct t\sigma^{\left(a\right)}\left(E\right)\right)\nonumber \\
 & =\lim_{t\rightarrow\infty}\frac{\sc\left(E\right)-1}{t}+\sum_{a\in\mathcal{A}}\lim_{t\rightarrow\infty}\frac{\ct t}{t}\sigma^{\left(a\right)}\left(E\right)\nonumber \\
 & =\lim_{t\rightarrow\infty}\left[\left(\frac{\sc\left(E\right)}{\left|\Gamma_{\omega,t}\right|}-\frac{1}{\left|\Gamma_{\omega,t}\right|}\right)\cdot\frac{\left|\Gamma_{\omega,t}\right|}{t}\right]+\sum_{a\in\mathcal{A}}\freq a\sigma^{\left(a\right)}\left(E\right)\nonumber \\
 & =N_{\Omega}\left(E\right)\cdot\overline{L}\left(\Gamma_{\Omega}\right)+\sum_{a\in\mathcal{A}}\freq a\sigma^{\left(a\right)}\left(E\right).\label{eq:SH-compute}
\end{align}
We thus finally obtain
\begin{equation}
N_{\Omega}\left(E\right)=\frac{S_{\Omega}\left(\left[\phi\right]\right)-\sum_{a\in\mathcal{A}}\freq a\sigma^{\left(a\right)}\left(E\right)}{\overline{L}\left(\Gamma_{\Omega}\right)}.\label{eq:GL-SH}
\end{equation}

To complete the proof we need to show that the numerator of (\ref{eq:GL-SH})
belongs to the Schwartzman group, $\S$. By definition this group
is the image of the Schwartzman homomorphism so that $S_{\Omega}\left(\left[\phi\right]\right)\in\S$.
Since $\S$ is an additive group, it is left to prove that $\sum_{a\in\mathcal{A}}\freq a\sigma^{\left(a\right)}\left(E\right)\in\S$.
From \cite[thm. 7.1]{DamFil_otaa23}, we know that $\S$ is the $\Z$-module
generated by
\begin{equation}
\left\{ \mu\left(\Xi\right):\Xi\text{ is a cylinder set in }\Omega\right\} ,\label{eq:cylinders}
\end{equation}
where a cylinder set is a subset of $\Omega$, for which a finite
subword is fixed to be a given value. In particular, we consider cylinder
sets with a single letter being fixed, which are of the form
\begin{equation}
\Xi_{a}:=\left\{ \omega\in\Omega:\omega\left(0\right)=a\right\} ,a\in\A.\label{eq:letter-cylinder}
\end{equation}
Since $\mu\left(\Xi_{a}\right)=\nu_{a}$ for a uniquely ergodic subshift,
and $\sigma^{\left(a\right)}\left(E\right)$ is an integer for all
$a\in\A$, we get $\sum_{a\in\mathcal{A}}\freq a\sigma^{\left(a\right)}\left(E\right)\in\S$,
as required.
\end{proof}

\section{Gap labelling for discrete graphs - Proof of Theorem~\ref{thm:Discrete-GLT}\label{sec:Discrete-GLT}}

In this section we prove the gap labelling theorem for discrete decorated
graphs (Theorem~\ref{thm:Discrete-GLT}). The main tool is the well-known
spectral relation between the discrete Laplacian and the Kirchhoff
Laplacian on the corresponding equilateral metric graph, summarized
below.
\begin{thm}
\label{thm:discrete metric} Let $\Gamma$ be an equilateral metric
graph with all edge lengths equal to $1$, equipped with the Kirchhoff
Laplacian $H$. Let $G$ be the associated discrete graph, equipped
with the normalized discrete Laplacian $\Delta$.
\begin{enumerate}
\item \label{enu: thm-discrete metric-1} For all $k\notin\left\{ \pi m:m\in\N\right\} $,
\begin{equation}
k^{2}\in\spec H\iff1-\cos\left(k\right)\in\spec{\Delta}.\label{eq:discrete-metric}
\end{equation}
Furthermore, if the corresponding points in the spectrum ($k^{2}$
and $1-\cos(k)$) are eigenvalues, then they have the same multiplicities.
\item \label{enu: thm-discrete metric-2} If, in addition, $\Gamma$ is
compact and connected, then its spectral counting function at $k^{2}=\pi^{2}m^{2}$
equals
\begin{equation}
\#\set{\lambda\in\spec H}{\lambda\leq\pi^{2}m^{2}}=\left|\E_{\Gamma}\right|m+M,\label{eq:spectral-counting}
\end{equation}
where $M\in\{0,1\}$ is the multiplicity of $1-\cos\left(\pi m\right)\in\{0,2\}$
in $\spec{\Delta}$.
\end{enumerate}
\end{thm}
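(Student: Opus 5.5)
Part (1) is von Below's correspondence, and I would prove it by writing eigenfunctions edgewise. For $k\notin\pi\Z$, every solution of $-f''=k^{2}f$ on a unit edge $e=[u,w]$ is determined by its endpoint values:
\[
f|_{e}(x)=\frac{f(u)\sin(k(1-x))+f(w)\sin(kx)}{\sin k}.
\]
Substituting this into the Kirchhoff current condition at $v$ gives
\[
\sum_{u\sim v}f(u)=\cos(k)\deg(v)f(v).
\]
Setting $\psi(v)=\sqrt{\deg v}\,f(v)$, this becomes $\Delta\psi=(1-\cos k)\psi$ for the normalized Laplacian (\ref{eq:norm-lap}). Conversely, any $\psi$ satisfying this equation interpolates to a continuous Kirchhoff solution. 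The map $f\mapsto\psi$ is linear and injective, since $f$ is recovered from its vertex values. This gives equality of eigenspace dimensions, including on infinite graphs. For infinite graphs one must also check that $\ell^{2}$ and $L^{2}$ membership correspond. This follows from the uniform comparison
\[
\|f|_{e}\|_{L^{2}}^{2}\asymp|f(u)|^{2}+|f(w)|^{2},
\]
whose constants depend only on $k$, together with the bounded degree assumption.

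Membership in the continuous spectrum is the harder half of (1). I would handle it via Weyl sequences, or better via the Krein-type resolvent formula relating $(H-k^{2})^{-1}$ to $(\Delta-(1-\cos k))^{-1}$ through the Dirichlet-decoupled operator. The Dirichlet-decoupled spectrum is $\{\pi^{2}m^{2}\}$, which is excluded. Taking this formula as standard (it is the cited "well-known" relation), boundedness of one resolvent is equivalent to boundedness of the other. I would spend only a sentence on it.

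For part (2), take $\Gamma$ compact and connected. I would count the eigenvalues of $H$ in $[0,\pi^{2}m^{2}]$ by splitting them into two classes.
\begin{itemize}
\item Non-exceptional eigenvalues $k^{2}$ with $k\in(\pi j,\pi(j+1))$. By (1) these correspond bijectively, with multiplicity, to the eigenvalues of $\Delta$ in $(0,2)$, and this holds in each of the $m$ intervals. Counting the eigenvalues $1-\cos k$ of $\Delta$ in $(0,2)$ gives $|\V|-1-M_{2}$ per interval, where $M_{2}\in\{0,1\}$ is the multiplicity of the eigenvalue $2$ (bipartiteness). The eigenvalue $0$ always has multiplicity one by connectedness.
\item Exceptional eigenvalues $k=\pi j$. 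Here the multiplicity is known. For $j\geq1$ even it is $\beta+1$, coming from cycle functions plus the constant-type function. For $j$ odd it is $\beta+1$ if $\Gamma$ is bipartite and $\beta-1$ otherwise. Here $\beta=|\E|-|\V|+1$. I would derive these by dimension counting: eigenfunctions vanishing at all vertices form a space of dimension $|\E|-|\V|$ plus a bipartiteness correction.
\end{itemize}
I then sum these contributions up to $k=\pi m$, including the eigenvalue $\pi^{2}m^{2}$ itself. The exceptional multiplicities pair up with the $|\V|$ terms to give $|\E|$ per period, leaving a boundary term. That boundary term should equal the multiplicity of $1-\cos(\pi m)$ in $\spec{\Delta}$, which is $1$ if $m$ is even and is the bipartite indicator if $m$ is odd.

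The main obstacle is this bookkeeping of exceptional multiplicities at $k=\pi j$, particularly the bipartite/non-bipartite distinction at odd $j$. I would double-check it on a cycle graph and on a single edge before writing it up. An alternative is an interlacing argument: compare $H$ with the Dirichlet-at-vertices operator, whose counting function at $\pi^{2}m^{2}$ is exactly $|\E|m$, and note that imposing Kirchhoff conditions is a finite-rank perturbation. I would keep this as a cross-check.
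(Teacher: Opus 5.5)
Your proposal is correct and is essentially the argument the paper relies on. The paper gives no proof of its own: for part (1) it cites von Below's correspondence (Cattaneo, Pankrashkin, Lled\'o--Post), and for part (2) it cites Lled\'o--Post's case-by-case multiplicity count at $k=\pi j$, combined with the multiplicities of $0$ and $2$ in $\spec{\Delta}$, which is exactly your bookkeeping. One small correction: when $\Gamma$ is not bipartite, the period $(\pi(j-1),\pi j]$ contributes $|\E|+1$ or $|\E|-1$ according to whether $j$ is even or odd, not $|\E|$. So the cancellation happens only over pairs of periods, but the leftover term (together with the eigenvalue $0$) is indeed $M$.
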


The first part of the theorem is standard (see e.g., \cite{Cat_mm97,LlePos_jmaa08,Pan_lmp06,Bel_laa85}).
The second part follows from the case-by-case eigenvalue count in
\cite[prop. 6.2]{LlePos_jmaa08}, together with some basic properties
of the normalized discrete Laplacian.

Using Theorem \ref{thm:discrete metric}, we relate the IDS of the
discrete and metric decorated graphs. The ``conversion factor''
which connects between the discrete and metric IDS is given by
\begin{equation}
C\left(G_{\Omega}\right):=\frac{\overline{\V}\left(G_{\Omega}\right)}{\overline{\E}\left(G_{\Omega}\right)}=\frac{\sum_{a\in\A}\freq a\left|\V_{G_{a}}\right|}{1+\sum_{a\in\A}\freq a\left|\E_{G_{a}}\right|},\label{eq:conversion-ratio}
\end{equation}
which is the ratio between the average number of vertices and the
average number of edges.
\begin{prop}
\label{prop:counting-functions}Let $(\Omega,T)$ be a uniquely ergodic
subshift. Let $\left\{ \Gamma_{\omega}\right\} _{\omega\in\Omega}$
be a family of decorated $\Z$-graphs, such that each $\Gamma_{\omega}$
is an equilateral graph with all edge lengths equal to $1$. Let $\left\{ G_{\omega}\right\} _{\omega\in\Omega}$
be the associated discrete graphs. Denote the corresponding IDS functions
by $N_{\Omega}^{H}\left(E\right),N_{\Omega}^{\Delta}\left(E\right)$.
Then at every point $E$, where $N_{\Omega}^{H}$ is continuous we
have
\begin{equation}
N_{\Omega}^{H}\left(E\right)=\left\lfloor \frac{\sqrt{E}}{\pi}\right\rfloor +C\left(G_{\Omega}\right)\cdot\begin{cases}
N_{\Omega}^{\Delta}\left(1-\cos\left(\sqrt{E}\right)\right), & \left\lfloor \frac{\sqrt{E}}{\pi}\right\rfloor \text{ is even,}\\
\\1-N_{\Omega}^{\Delta}\left(1-\cos\left(\sqrt{E}\right)\right), & \left\lfloor \frac{\sqrt{E}}{\pi}\right\rfloor \text{ is odd.}
\end{cases}\label{eq:disc-metric-relation}
\end{equation}
\end{prop}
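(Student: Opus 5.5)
The plan is to prove the identity first for the finite truncations and then pass to the limit. Fix $\omega$ in the full-measure set where both truncated counting functions converge (Proposition~\ref{prop:IDS-existence} and its discrete analogue). Let $\Gamma_n:=\left.\Gamma_{\omega}\right|_{[0,n]}$ and $G_n:=\left.G_{\omega}\right|_{[0,n]}$. The key observation is that $\Gamma_n$ is exactly the compact connected equilateral metric graph whose underlying discrete graph is $G_n$. Indeed, both are obtained by deleting the edges $(-1,0)$ and $(n,n+1)$. The Neumann--Kirchhoff conditions at the cut vertices correspond to the normalized Laplacian of $G_n$ with the reduced degrees. Hence Theorem~\ref{thm:discrete metric} applies to the pair $(\Gamma_n,G_n)$. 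Since all edges have length $1$, $|\Gamma_n|=|\E_{\Gamma_n}|=n+\sum_{a}\#_a^{n+1}(\omega)\,|\E_{G_a}|$ and $|\V_{G_n}|=\sum_a\#_a^{n+1}(\omega)\,|\V_{G_a}|$. By unique ergodicity, $|\V_{G_n}|/|\E_{\Gamma_n}|\to C(G_\Omega)$.

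Next, take $E=k^2$ with $k\in(\pi m,\pi(m+1))$ and $m=\lfloor\sqrt E/\pi\rfloor$. Split the eigenvalue count of $H$ on $\Gamma_n$ as the count up to $\pi^2m^2$ plus the count in $(\pi^2m^2,k^2]$. The first term is $|\E_{\Gamma_n}|m+M$ by part~(\ref{enu: thm-discrete metric-2}) of Theorem~\ref{thm:discrete metric}. For the second term, $k\mapsto 1-\cos k$ is a monotone bijection of $(\pi m,\pi(m+1))$ onto $(0,2)$. It is increasing if $m$ is even and decreasing if $m$ is odd. By part~(\ref{enu: thm-discrete metric-1}), with multiplicities, the second term therefore equals the number of eigenvalues of $\Delta$ in $(0,1-\cos k]$ when $m$ is even. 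When $m$ is odd it equals the number in $[1-\cos k,2)$. In the even case $M$ is the multiplicity of $0$, so the total is
\[
|\E_{\Gamma_n}|m+\#\{\lambda\in\mathrm{Spec}(\Delta_{G_n}):\lambda\le 1-\cos k\}.
\]
In the odd case $M$ is the multiplicity of $2$, and the total is
\[
|\E_{\Gamma_n}|m+|\V_{G_n}|-\#\{\lambda\in\mathrm{Spec}(\Delta_{G_n}):\lambda<1-\cos k\}.
\]
Dividing by $|\Gamma_n|=|\E_{\Gamma_n}|$ and writing $|\V_{G_n}|/|\E_{\Gamma_n}|$ as the prefactor of the discrete normalized count gives the finite-volume version of (\ref{eq:disc-metric-relation}). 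The only difference from the claimed formula is the strict inequality in the odd case.

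The step I expect to be the main obstacle is passing to the limit $n\to\infty$, in particular the strict versus non-strict count in the odd case and the energies $E=\pi^2m^2$. The first and second terms converge to $m$ and to $C(G_\Omega)N^\Delta_\Omega(1-\cos k)$ or $C(G_\Omega)(1-N^\Delta_\Omega(\cdot))$ respectively. For the odd case, the normalized strict count converges to the left limit $N^\Delta_\Omega((1-\cos k)^-)$. I would show this agrees with $N^\Delta_\Omega(1-\cos k)$ at continuity points of $N^H_\Omega$. Apply the finite-volume identity with both $\le$ and $<$ on each side and take limits. This shows that the jump of $N^H_\Omega$ at $k^2$ equals $C(G_\Omega)$ times the jump of $N^\Delta_\Omega$ at $1-\cos k$, so continuity of one forces continuity of the other.

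The remaining cases are handled separately. At $E=\pi^2m^2$, the finite-volume formula gives $m+M/|\E_{\Gamma_n}|$, and $M\in\{0,1\}$, so the normalized value tends to $m$. This matches the right-hand side, since $N^\Delta_\Omega(0)=0$ and $1-N^\Delta_\Omega(2)=0$ in the limit, as a single eigenvalue contributes $O(1/n)$. For $E<0$ both sides vanish trivially, since $H\ge0$ and the formula is read with $m=0$ there. Uniform convergence from Proposition~\ref{prop:IDS-existence} is not essential; pointwise convergence at the relevant energies suffices.
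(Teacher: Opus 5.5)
Your proposal is correct and follows essentially the same route as the paper: apply Theorem~\ref{thm:discrete metric} to the truncations $(\Gamma_n,G_n)$, split the eigenvalue count at $\pi^2m^2$ using the parity-dependent monotonicity of $1-\cos k$, divide by $|\Gamma_n|=|\E_{\Gamma_n}|$, and use $|\V_{G_n}|/|\E_{\Gamma_n}|\to C(G_\Omega)$. Your jump comparison makes explicit the strict-versus-non-strict step that the paper only attributes to continuity of $N^H_\Omega$: the finite-volume multiplicities at $k^2$ and at $1-\cos k$ coincide, so the jump of $N^H_\Omega$ at $k^2$ is $C(G_\Omega)$ times that of $N^\Delta_\Omega$ at $1-\cos k$; your separate check at $E=\pi^2m^2$ is a harmless addition.
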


\begin{proof}
We first relate the spectral counting functions of compact discrete
and metric graphs. We then take the limit as in Proposition~\ref{prop:IDS-existence}
and (\ref{eq:truncation-IDS-2}) in order to compare the corresponding
IDS.

Let $\Gamma$ be an equilateral compact metric graph with all edge
lengths equal to $1$, and equipped with the Kirchhoff Laplacian $H$.
Let $G$ be the associated discrete graph equipped with $\Delta$.

\begin{figure}
\includegraphics[scale=0.45]{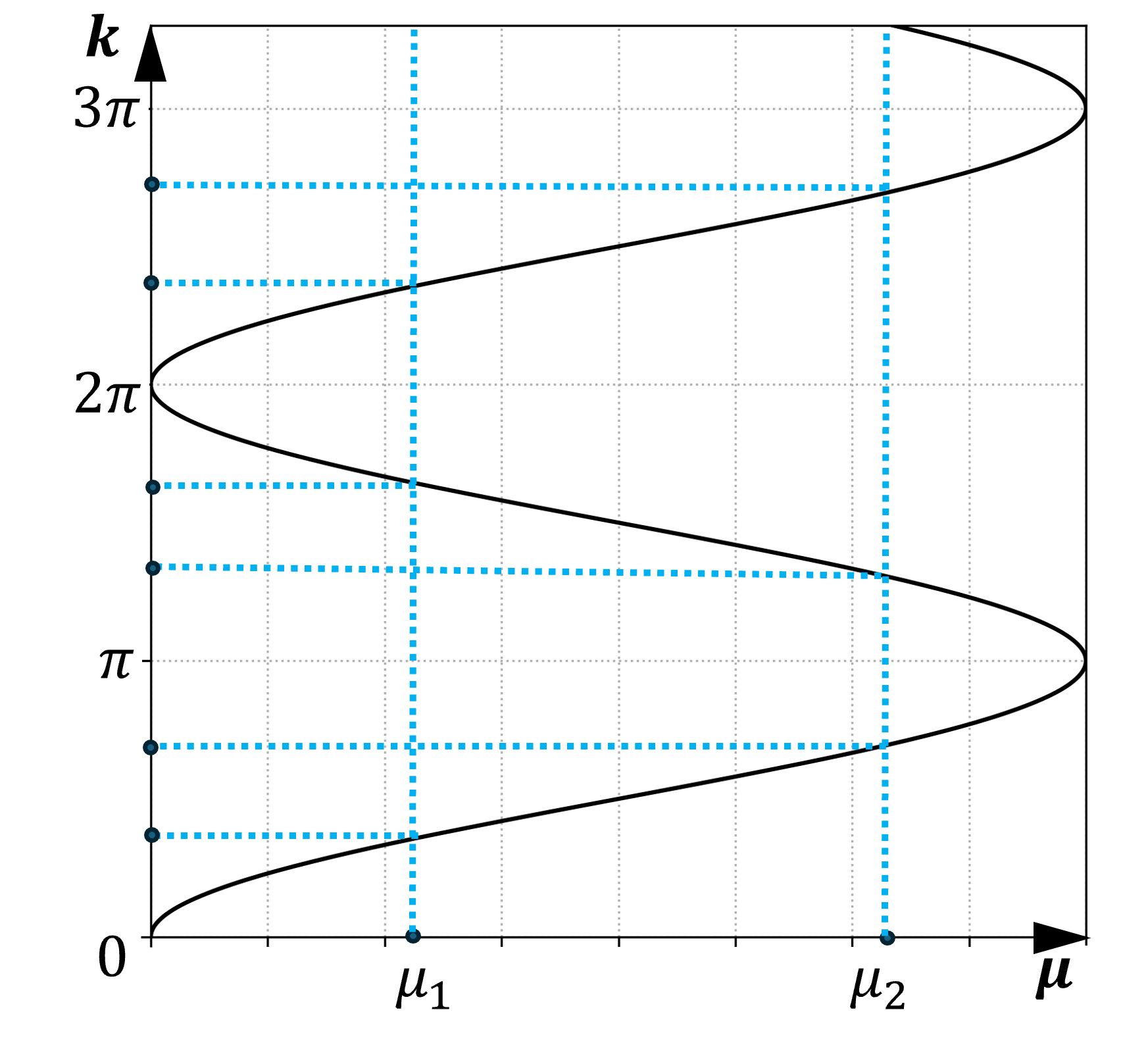}

\caption{The dispersion relation $k\left(\mu\right)=\arccos\left(1-\mu\right)$
from Theorem \ref{thm:discrete metric} relating $\protect\spec{\Delta}$
(horizontal) and $\protect\spec H$ (vertical). Each $\mu\in\protect\spec{\Delta}$
corresponds to a point $k\left(\mu\right)^{2}\in\protect\spec H$.
The dispersion relation $k\left(\mu\right)$ is either monotone increasing
or monotone decreasing, depending on the parity of $\left\lfloor \frac{k\left(\mu\right)}{\pi}\right\rfloor $.
\label{fig: dispersion}}
\end{figure}
Let $E\geq0$. Write $E=k^{2}$, and count the total number of square
roots of eigenvalues of $H$ in $\left[0,k\right]$ (with multiplicity).
We write $k=\pi m+r$, with $m\in\N$ and $r\in\left[0,\pi\right)$.
We use Theorem\,\ref{thm:discrete metric} to present the number
of (square roots of) eigenvalues in $\left[0,k\right]$ as the sum
of eigenvalue counts in $\left[0,\pi m\right]$ and in $\left[\pi m,k\right]$.
Noting that $m=\left\lfloor \frac{k}{\pi}\right\rfloor $ and applying
Theorem~\ref{thm:discrete metric} we get
\begin{align}
\#\left\{ \lambda\in\spec H\right. & :\left.\lambda\le k^{2}\right\} -\left\lfloor \frac{k}{\pi}\right\rfloor \left|\E_{G}\right|=\nonumber \\
= & \begin{cases}
\#\set{\mu\in\spec{\Delta}}{\mu\le1-\cos\left(k\right)}, & \left\lfloor \frac{k}{\pi}\right\rfloor \text{ is even,}\\
\#\set{\mu\in\spec{\Delta}}{\mu\ge1-\cos\left(k\right)}, & \left\lfloor \frac{k}{\pi}\right\rfloor \text{ is odd,}
\end{cases}\nonumber \\
= & \begin{cases}
\#\set{\mu\in\spec{\Delta}}{\mu\le1-\cos\left(k\right)}, & \left\lfloor \frac{k}{\pi}\right\rfloor \text{ is even,}\\
\left|\V_{G}\right|-\#\set{\mu\in\spec{\Delta}}{\mu<1-\cos\left(k\right)}, & \left\lfloor \frac{k}{\pi}\right\rfloor \text{ is odd,}
\end{cases}\label{eq:disc-met-2}
\end{align}
where all eigenvalue counts above are with multiplicities (i.e., $\#\left\{ \phantom{\lambda}\right\} $
is considered as element counting of a multi-set). In the first equality
above we need to separate cases according to the parity of $m=\left\lfloor \frac{k}{\pi}\right\rfloor $,
since the dispersion relation $1-\cos(k)$ in (\ref{eq:discrete-metric})
is monotone increasing when $\left\lfloor \frac{k}{\pi}\right\rfloor $
is even and decreasing when $\left\lfloor \frac{k}{\pi}\right\rfloor $
is odd. See Figure~\ref{fig: dispersion}, where the inverse dispersion
relation $\lambda\left(\mu\right)=\arccos\left(1-\mu\right)$ is depicted.

Next, let $\omega\in\Omega$ be in the full measure set for which
Proposition\ \ref{prop:IDS-existence} holds and choose the sequences
of compact graphs $\Gamma^{(n)}:=\pa$ and $G^{(n)}:=\dpa$ as in
(\ref{eq:truncation-IDS}),(\ref{eq:truncation-IDS-2}) and take the
limit $n\rightarrow\infty$ to get the IDS. We perform the computation
only for the case of odd $\left\lfloor \frac{k}{\pi}\right\rfloor $.
The complementary case involves a similar (and slightly simpler) computation.
Using the convergence stated in Proposition\ \ref{prop:IDS-existence}
and applying (\ref{eq:disc-met-2}) we compute:
\begin{align}
N_{\Omega}^{H}\left(k^{2}\right)= & \lim_{n\rightarrow\infty}\frac{\#\set{\lambda\in\spec{\left.H_{\omega}\right|_{\Gamma^{(n)}}}}{\lambda\le k^{2}}}{\left|\Gamma^{(n)}\right|}\nonumber \\
= & \lim_{n\rightarrow\infty}\frac{\#\set{\lambda\in\spec{\left.H_{\omega}\right|_{\Gamma^{(n)}}}}{\lambda\le k^{2}}}{\left|\E_{G^{(n)}}\right|}\nonumber \\
= & \lim_{n\rightarrow\infty}\frac{1}{\left|\E_{G^{(n)}}\right|}\cdot\left\lfloor \frac{k}{\pi}\right\rfloor \text{\ensuremath{\left|\E_{G^{(n)}}\right|}}\nonumber \\
\nonumber \\\  & +\lim_{n\rightarrow\infty}\frac{\left|\V_{G^{(n)}}\right|-\#\set{\mu\in\spec{\left.\Delta\right|_{G^{(n)}}}}{\mu<1-\cos\left(k\right)}}{\left|\E_{G^{(n)}}\right|}\nonumber \\
\nonumber \\= & \left\lfloor \frac{k}{\pi}\right\rfloor +\lim_{n\rightarrow\infty}\frac{\left|\V_{G^{(n)}}\right|}{\left|\E_{G^{(n)}}\right|}\thinspace\frac{\left|\V_{G^{(n)}}\right|-\set{\mu\in\spec{\left.\Delta\right|_{G^{(n)}}}}{\mu<1-\cos\left(k\right)}}{\left|\V_{G^{(n)}}\right|},\label{eq:disc-met-3}
\end{align}
where in the first equality we used that $\Gamma^{(n)}$ has all edge
lengths equal to $1$ and so $\left|\Gamma^{(n)}\right|=\left|\E_{G^{(n)}}\right|$.
The prefactor inside the limit above is
\begin{align}
\lim_{n\rightarrow\infty}\frac{\left|\V_{G^{(n)}}\right|}{\left|\E_{G^{(n)}}\right|}= & \lim_{n\rightarrow\infty}\frac{\sum_{a\in\A}\#_{a}^{n}\left(\omega\right)\left|\V_{G_{a}}\right|}{n+\sum_{a\in\A}\#_{a}^{n}\left(\omega\right)\left|\E_{G_{a}}\right|}\nonumber \\
= & \lim_{n\rightarrow\infty}\frac{\sum_{a\in\A}\frac{\#_{a}^{n}\left(\omega\right)}{n}\left|\V_{G_{a}}\right|}{1+\sum_{a\in\A}\frac{\#_{a}^{n}\left(\omega\right)}{n}\left|\E_{G_{a}}\right|}=\frac{\sum_{a\in\A}\freq a\left|\V_{G_{a}}\right|}{1+\sum_{a\in\A}\freq a\left|\E_{G_{a}}\right|}=C\left(G_{\Omega}\right).\label{eq:Betti-avg}
\end{align}
Substituting this above and recalling that $k=\sqrt{E}$ gives
\begin{align}
N_{\Omega}^{H}\left(E\right)= & \left\lfloor \frac{\sqrt{E}}{\pi}\right\rfloor +C\left(G_{\Omega}\right)\lim_{n\rightarrow\infty}\frac{\left|\V_{G^{(n)}}\right|-\set{\mu\in\spec{\left.\Delta\right|_{G^{(n)}}}}{\mu<1-\cos\left(\sqrt{E}\right)}}{\left|\V_{G^{(n)}}\right|}\nonumber \\
= & \left\lfloor \frac{\sqrt{E}}{\pi}\right\rfloor +C\left(G_{\Omega}\right)\left[1-N_{\Omega}^{\Delta}\left(1-\cos\left(\sqrt{E}\right)\right)\right].\label{eq:final-cases}
\end{align}
Note that by definition, 
\begin{equation}
N_{\Omega}^{\Delta}\left(1-\cos\left(\sqrt{E}\right)\right)=\lim_{n\rightarrow\infty}\frac{1}{\left|\V_{G^{(n)}}\right|}\set{\mu\in\spec{\left.\Delta\right|_{G^{(n)}}}}{\mu\le1-\cos\left(\sqrt{E}\right)}.\label{eq:IDS-def}
\end{equation}
 Nevertheless, the last equality above is justified (even though the
strict inequality $\mu<1-\cos\left(k\right)$ appears), since we assume
that $N_{\Omega}^{H}\left(E\right)$ is continuous at $E$. The distinction
between strict and non-strict inequality in the spectral counting
functions matters only when there is a discontuity in the IDS (see
more on jump discontinuities of the IDS in Section\ \ref{sec:Discontinuities-in-the-IDS}).

For the case when $\left\lfloor \frac{\sqrt{E}}{\pi}\right\rfloor $
is even a similar computation gives $N_{\Omega}^{H}\left(E\right)=\left\lfloor \frac{\sqrt{E}}{\pi}\right\rfloor +C\left(G_{\Omega}\right)\cdot N_{\Omega}^{\Delta}\left(1-\cos\left(\sqrt{E}\right)\right)$.
\end{proof}
\begin{proof}[Proof of Theorem \ref{thm:Discrete-GLT}]
 By Proposition \ref{prop:counting-functions}, 
\begin{equation}
N_{\Omega}^{\Delta}\left(1-\cos\left(\sqrt{E}\right)\right)=\begin{cases}
\frac{1}{C\left(G_{\Omega}\right)}\left(N_{\Omega}^{H}\left(E\right)-\left\lfloor \frac{\sqrt{E}}{\pi}\right\rfloor \right), & \left\lfloor \frac{\sqrt{E}}{\pi}\right\rfloor \text{ is even,}\\
\\1-\frac{1}{C\left(G_{\Omega}\right)}\left(N_{\Omega}^{H}\left(E\right)-\left\lfloor \frac{\sqrt{E}}{\pi}\right\rfloor \right), & \left\lfloor \frac{\sqrt{E}}{\pi}\right\rfloor \text{ is odd.}
\end{cases}\label{eq:ap-relation}
\end{equation}

By Theorem~\ref{thm:GLT}, if $E\notin\spec{H_{\Omega}}$ then 
\begin{equation}
N_{\Omega}^{H}\left(E\right)\in\frac{\S}{\overline{\E}(G_{\Omega})},\label{eq:disc-glt-0}
\end{equation}
where we used $\overline{L}(\Gamma_{\Omega})=\overline{\E}(G_{\Omega})$
which holds since the metric graphs are equilateral with each edge
length equal to $1$. We fix $\omega\in\Omega$ to be in the full
measure set for which $\spec{H_{\omega}}=\spec{H_{\Omega}}$ and $\spec{\Delta_{\omega}}=\spec{\Delta_{\Omega}}$
(see Section~\ref{subsec:Comb-graphs}) and the spectral counting
functions converge to the IDS as in Proposition~\ref{prop:IDS-existence}
and Equation~(\ref{eq:truncation-IDS-2}). From Theorem~\ref{thm:discrete metric}
we conclude that $E$ is inside a spectral gap of $H_{\omega}$ if
and only if $1-\cos(\sqrt{E})$ is inside a spectral gap of $\Delta_{\omega}$.
Therefore, the possible gap labels of $\Delta_{\omega}$ (and hence
of $\spec{\Delta_{\Omega}}$) may be obtained by substituting (\ref{eq:disc-glt-0})
in (\ref{eq:ap-relation}). For this, recall that $\overline{\E}(G_{\Omega})=1+\sum_{a\in\A}\nu_{a}\E(G_{a})$
and that $\nu_{a}\in\S$ for all $a\in\A$ and $\Z\subset\S$ (as
is explained in the end of the proof of Theorem\ \ref{thm:GLT}).
Therefore $\overline{\E}(G_{\Omega})\in\S$ and so for $E\notin\spec{H_{\Omega}}$,
\begin{equation}
N_{\Omega}^{H}\left(E\right)+\Z\in\frac{\S+\Z\thinspace\overline{\E}(G_{\Omega})}{\overline{\E}(G_{\Omega})}\subset\frac{\S}{\overline{\E}(G_{\Omega})},\label{eq:N+Z}
\end{equation}
and using $C\left(G_{\Omega}\right)=\frac{\overline{\V}(G_{\Omega})}{\overline{\E}(G_{\Omega})}$
we get 
\begin{equation}
\frac{1}{C\left(G_{\Omega}\right)}\left(N_{\Omega}^{H}\left(E\right)+\Z\right)\subset\frac{\S}{\overline{\V}(G_{\Omega})}.\label{eq:disc-glt-1}
\end{equation}
Using again that $\nu_{a}\in\S$ for all $a\in\A$, we get $\overline{\V}(G_{\Omega})\subset\S$,
which yields that 
\begin{equation}
1-\frac{1}{C\left(G_{\Omega}\right)}\left(N_{\Omega}^{H}\left(E\right)+\Z\right)\subset\frac{\S}{\overline{\V}(G_{\Omega})}.\label{eq:disc-glt-2}
\end{equation}

From (\ref{eq:disc-glt-1}) and (\ref{eq:disc-glt-2}), both cases
in (\ref{eq:ap-relation}) yield the same gap labels,
\begin{equation}
\mathcal{GL}\left(N_{\Omega}^{\Delta}\right)\subset\frac{\S}{\overline{\V}(G_{\Omega})}\cap[0,1].\label{GL}
\end{equation}
\end{proof}

\section{Discontinuities of the IDS - Proof of Theorem~\ref{thm:IDS-jumps}\label{sec:Discontinuities-in-the-IDS}}

Theorems \ref{thm:GLT} and \ref{thm:Discrete-GLT} provide the set
of all possible gap labels for operators on metric and discrete decorated
$\Z$-graphs. A well-known problem is to find whether all gap labels
predicted by such gap labelling theorems actually occur. This is called
the dry ten Martini problem, originating in a question by Mark Kac
about the almost Mathieu operator \cite{Simon1982}. In this section
we discuss a specific form of obstructions for the appearance of the
predicted gaps. Since the predicted gap labels form a dense set, any
discontinuity of the IDS implies the existence of labels that are
not realized (also known as closed gaps). We illustrate this by completely
analyzing the IDS jumps for metric Sturmian combs (see Example~\ref{exa:Sturmian comb}).
The necessary and sufficient conditions for IDS discontinuities of
these graphs are given in Theorem~\ref{thm: IDS jumps Sturmian combs}.
In addition, the theorem explicitly states all the energies at which
such discontinuities occur and the size of the IDS jump at those energies.
Theorem ~\ref{thm:IDS-jumps} is an immediate corollary of Theorem
~\ref{thm: IDS jumps Sturmian combs}.
\begin{thm}
\label{thm: IDS jumps Sturmian combs} Let $\alpha\in\left(0,1\right)\backslash\Q$,
written as the following infinite continued fraction:
\begin{equation}
\alpha=\frac{1}{c_{1}+\frac{1}{c_{2}+...}}.\label{eq:cf-expansion-2-1}
\end{equation}
Let $\Omega_{\alpha}$ be the corresponding Sturmian subshift. Then
the IDS for the associated family of Sturmian combs $\left(\Gamma_{\omega}\right)_{\omega\in\Omega_{\alpha}}$
has discontinuities if and only if one of the following holds:
\begin{enumerate}
\item $\frac{\ell}{L}=\frac{2m+1}{2n}(c_{1}+1)$ for some $m,n\in\N$.\\
In this case the IDS is discontinuous at $E=\left(\frac{\pi n}{L\left(c_{1}+1\right)}\right)^{2}$,
and the associated jump in the IDS value is
\begin{equation}
\Delta N_{\Omega_{\alpha}}\left(E\right)=\frac{1-c_{1}\alpha}{L+\alpha\ell},\label{eq:bigjump}
\end{equation}
or\\
~
\item $\frac{\ell}{L}=\frac{2m+1}{2n}c_{1}$ for some $m,n\in\N$. \\
In this case, the IDS is discontinuous at $E=\left(\frac{\pi n}{Lc_{1}}\right)^{2}$,
and the associated jump in the IDS value is
\begin{equation}
\Delta N_{\Omega_{\alpha}}\left(E\right)=\frac{(c_{1}+1)\alpha-1}{L+\alpha\ell}.\label{eq:smalljump}
\end{equation}
\end{enumerate}
If both conditions on $\ell/L$ above hold simultaneously, i.e., $\frac{\ell}{L}=\frac{2m_{1}+1}{2n_{1}}(c_{1}+1)=\frac{2m_{2}+1}{2n_{2}}c_{1}$
for $m_{1},n_{1},m_{2},n_{2}\in\N$, then the IDS is discontinuous
at $E=\left(\frac{\pi n_{1}}{L\left(c_{1}+1\right)}\right)^{2}=\left(\frac{\pi n_{2}}{Lc_{1}}\right)^{2}$,
and the associated jump in the IDS value is the sum of (\ref{eq:smalljump})
and (\ref{eq:bigjump}), i.e., 
\begin{equation}
\Delta N_{\Omega_{\alpha}}\left(E\right)=\frac{\alpha}{L+\alpha\ell}.\label{eq: sumjump}
\end{equation}

\end{thm}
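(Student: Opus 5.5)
The plan is to show that every discontinuity of $N_{\Omega_\alpha}$ comes from compactly supported eigenfunctions, to classify all such eigenfunctions on a Sturmian comb explicitly, and then to compute their density. I will first invoke the standard fact for ergodic operator families with finite local complexity (Lenz--Veselić type results, adapted to metric graphs in the spirit of Appendix~\ref{sec:IDS-exists}). It says that $E$ is a point of discontinuity of the IDS if and only if $H_\omega$ has compactly supported eigenfunctions at energy $E$. It also says that $\Delta N_{\Omega_\alpha}(E)$ equals the asymptotic number, per unit length of $\Gamma_\omega|_{[0,n]}$, of linearly independent compactly supported eigenfunctions at $E$ supported in the truncated graph. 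This reduces the statement to a combinatorial/ODE classification on a single comb, together with a frequency count.

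\textbf{Construction.} Write $E=k^2$ with $k>0$ (the case $E\le 0$ is excluded by positivity and connectedness). Consider two consecutive teeth at horizontal positions $p<p+d$ with no tooth strictly between them. I define $f$ as follows:
\begin{enumerate}
\item on the horizontal segment $[p,p+d]$, set $f=\sin(k(x-p))$;
\item on each of the two teeth, set $f=A_\pm\sin(ky)$, where $y$ is the distance from the base;
\item set $f=0$ everywhere else.
\end{enumerate}
Continuity at both bases and the conditions at the degree-two horizontal vertices are automatic once $kd\in\pi\N$. The Neumann condition at the tooth tips requires $\cos(k\ell)=0$. Kirchhoff at the two bases then fixes $A_\pm$, which is possible because $\sin$ has nonzero derivative at its zeros. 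Hence an elementary eigenfunction exists if and only if $kd=\pi n$ and $k\ell=\pi(2m+1)/2$, i.e.\ $\ell/d=(2m+1)/(2n)$.

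For a Sturmian sequence with first continued fraction digit $c_1$, the distances between consecutive $1$'s take exactly the two values $d\in\{c_1L,(c_1+1)L\}$. This holds both for $\alpha<1/2$ and for $c_1=1$. Substituting these two values of $d$ gives exactly conditions (1) and (2), and the stated energies $E=(\pi n/d)^2$.

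\textbf{Completeness.} Conversely, let $f$ be a compactly supported eigenfunction with $E=k^2$. On a tooth with Neumann tip, $f=B\cos(k(\ell-y))$. Look at the outermost tooth base in the support. If $B=0$ there, then $f$ and its horizontal derivative vanish at that base, and unique continuation along the horizontal line kills $f$ on the neighbouring edge, a contradiction. So $B\neq0$ and the base value $B\cos(k\ell)$ must be $0$, i.e.\ $\cos(k\ell)=0$. But then $f$ vanishes at every tooth base, since each base value is $B\cos(k\ell)$. Between any two consecutive teeth, $f$ is therefore a multiple of $\sin(k(x-p))$ vanishing at both ends, which forces $kd\in\pi\Z$ on every gap inside the support. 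Subtracting the corresponding elementary eigenfunctions one gap at a time shows that $f$ is a finite linear combination of elementary ones. The elementary eigenfunctions on distinct gaps are linearly independent, since their horizontal supports are essentially disjoint. This also gives the first part of Theorem~\ref{thm:IDS-jumps}: eigenvalues have infinite multiplicity with compactly supported eigenfunctions.

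\textbf{Counting.} Let $x$ and $y$ be the frequencies, per horizontal site, of long gaps (length $c_1+1$) and short gaps (length $c_1$). Each $1$ starts exactly one gap, and the gap lengths tile $\Z$, so
\[
x+y=\alpha,\qquad (c_1+1)x+c_1y=1.
\]
Solving gives $x=1-c_1\alpha$ and $y=(c_1+1)\alpha-1$. The number of independent eigenfunctions in $\Gamma_\omega|_{[0,n]}$ is $nx+o(n)$ (respectively $ny+o(n)$), with boundary effects contributing only $O(1)$. The total length of $\Gamma_\omega|_{[0,n]}$ is $n(L+\alpha\ell)+o(n)$. Dividing yields \eqref{eq:bigjump} and \eqref{eq:smalljump}. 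When both conditions hold at the same $E$, both families contribute, and the two jumps add to $\alpha/(L+\alpha\ell)$, which is \eqref{eq: sumjump}.

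\textbf{Main obstacle.} The step I expect to be hardest is justifying the first reduction rigorously in the metric-graph setting. This has two parts. One must show that the IDS jump equals the density of compactly supported eigenfunctions, which requires excluding non-compactly supported $\ell^2$ eigenfunctions from contributing to jumps. One must also control the boundary contributions of the truncations, where the Neumann cut conditions could create or destroy $O(1)$ eigenvalues. I would first try to adapt the finite-local-complexity argument of Lenz--Veselić via Appendix~\ref{sec:IDS-exists}, using that the graphs have finite local complexity and are uniquely ergodic.
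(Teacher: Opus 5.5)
Your proposal is correct and follows essentially the same route as the paper. You reduce jumps to compactly supported eigenfunctions (the paper's Corollary \ref{cor:freq-jump}) and force $\cos(k\ell)=0$, so that all tooth bases are zeros. You then decompose into pieces supported between adjacent teeth at distance $c_1L$ or $(c_1+1)L$, and divide the corresponding subword frequencies by $L+\alpha\ell$. The differences are minor: you get $1-c_1\alpha$ and $(c_1+1)\alpha-1$ from the linear system $x+y=\alpha$, $(c_1+1)x+c_1y=1$ instead of from the interval lengths $|I_W|$ in Lemma \ref{lem:Frequencies}, and your spanning argument spells out why the between-teeth eigenfunctions exhaust the eigenspace, which the paper uses implicitly in (\ref{eq:dimker}).
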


\begin{rem*}
Note that if either case in the theorem occurs, it does so for infinitely
many pairs $\left(m,n\right)$, hence the IDS has jumps at infinitely
many energies.
\end{rem*}
We show that the IDS discontinuities are caused by compactly supported
eigenfunctions. A detailed resolution to the dry ten Martini problem
for Sturmian metric graphs is given in \cite{Band}. Two intriguing
recent works \cite{DamEmbFilMei_exp23,SchFraFliGru_prl24} explore
IDS discontinuities in aperiodic discrete graphs, which are also due
to compactly supported eigenfunctions. Some fundamental results on
this phenomenon for random operators on aperiodic discrete graphs
appeared already in \cite{KlaLenSto_cmp03}. Similar phenomena is
observed also in periodic graphs models, as was analyzed for discrete
graphs \cite{Peyerimhoff2021} and metric graphs \cite{Lenz2009}
(see also \cite{PeyTauVes_nano_2017} where continuous models are
confronted with metric and discrete graphs). The most recent work
on the IDS of quantum graphs and their discontinuities appear in \cite{BreLev_arXiv_2026,Levi_prep_2026}
where periodic metric trees are analyzed.

\ 

To prove Theorem\ \ref{thm: IDS jumps Sturmian combs} we need two
lemmas. Lemma\ \ref{lem: compactly supported between teeth} shows
that all the compactly supported eigenfunctions of $\Gamma_{\omega}$
are supported on specific subgraphs. These subgraphs are associated
with particular subwords of $\omega\in\Omega_{\alpha}$ and Lemma\ \ref{lem:Frequencies}
expresses the frequencies of these subwords.
\begin{lem}
\label{lem: compactly supported between teeth} Let $\alpha\in\left(0,1\right)\backslash\Q$,
$\omega\in\Omega_{\alpha}$ and $E\in\R$. A compactly supported $E$-eigenfunction
of the Sturmian comb $\Gamma_{\omega}$ exists if and only if there
exists an $E$-eigenfunction which is supported between two adjacent
teeth in $\Gamma_{\omega}$.
\end{lem}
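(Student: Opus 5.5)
The ``if'' direction is trivial: an eigenfunction supported between two adjacent teeth has compact support. For the converse, the plan is to take a compactly supported eigenfunction $f$ and cut it down to a single inter-tooth segment. The main tools are the vertex conditions at the teeth and at the degree-two chain vertices, together with unique continuation along edges.

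First I record the local structure. Each tooth is a dangling edge of length $\ell$ attached at some vertex $Ln$ with $\omega(n)=1$. On the tooth, $f$ solves $-f''=Ef$ with a Neumann condition at the free end, so $f|_{\text{tooth}}=c\cos(\sqrt{E}(\ell-x))$ up to a constant $c$. Vertices $Ln$ with $\omega(n)=0$ have degree two, and there the Kirchhoff conditions just mean $f$ is $C^{1}$ across the vertex. Hence, between two consecutive teeth at $Ln_{1}<Ln_{2}$, the restriction of $f$ to the horizontal segment $[Ln_{1},Ln_{2}]$ is a single solution of the ODE on an interval.

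Next, let $Ln_{+}$ be the right end of $\mathrm{supp} f$ on the horizontal line, after first arguing $f$ is not supported only on teeth. If $f$ vanishes on the whole horizontal line, then at each tooth vertex $f(Ln)=0$, and Kirchhoff forces $f'$ on the tooth to vanish at its base as well. So $f\equiv0$ on the tooth, and $f\equiv0$ overall.

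Now consider the rightmost segment where $f$ does not vanish identically. To the right of the support $f\equiv0$, and by unique continuation the support must end exactly at a tooth vertex. At a degree-two vertex we would have $f=f'=0$ from the right, hence also from the left, which propagates zero further left. So $f\equiv0$ on some half-line to the right of a tooth $Ln_{2}$, and $f(Ln_{2})=0$. Let $Ln_{1}$ be the previous tooth vertex. I then define $g$ to equal $f$ on $[Ln_{1},Ln_{2}]$ and zero elsewhere, including on both teeth. Since $f(Ln_{2})=0$, the tooth at $Ln_{2}$ has $c=0$ only if $\cos(\sqrt{E}\ell)\neq 0$. I will have to handle this carefully. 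The cleanest route is to check Kirchhoff for $g$ directly: at $Ln_{2}$ we need $g(Ln_{2})=0$, which holds, and we need the outgoing derivative sum $-f'(Ln_{2}^{-})=0$. So I need $f'(Ln_{2}^{-})=0$ and, symmetrically, $f(Ln_{1})=0$ with $f'(Ln_{1}^{+})=0$. That is too strong, since it would make $g\equiv0$.

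So the cut function must keep the teeth, and I redefine $g$ as $f$ on the segment $[Ln_{1},Ln_{2}]$ together with both teeth at $Ln_{1}$ and $Ln_{2}$, and zero elsewhere. I read ``supported between two adjacent teeth'' as including those teeth. Kirchhoff at $Ln_{2}$ then holds automatically, since $f$ vanishes to the right. At $Ln_{1}$ I need $f(Ln_{1})=0$ and $f'(Ln_{1}^{-})=0$ from the left chain edge. This is the key step and the main obstacle: showing that $f$ vanishes at some interior tooth vertex with zero horizontal flux across it.

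For this I will use the specific tooth transfer matrix. Across a tooth vertex the data $(f,f')$ on the chain transforms by $(f,f')\mapsto(f,f'-\sqrt{E}\tan(\sqrt{E}\ell)f)$, and the tooth amplitude is arbitrary when $\cos(\sqrt{E}\ell)=0$, which forces $f=0$ there. If $\cos(\sqrt{E}\ell)\neq0$, all transfer matrices are invertible, so the chain data $(f,f')$, being zero on the far right, propagates to zero everywhere, giving $f\equiv0$. Hence compactly supported eigenfunctions force $\cos(\sqrt{E}\ell)=0$, and then $f$ vanishes at every tooth vertex. With $f(Ln)=0$ at every tooth, the restriction of $f$ to any single inter-tooth segment plus its two teeth satisfies Kirchhoff with suitable tooth amplitudes: each tooth absorbs whatever flux mismatch occurs, since $\cos(\sqrt{E}(\ell-x))$ has nonzero derivative at the base. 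Choosing any segment on which $f\not\equiv0$, the corresponding $g$ is the required eigenfunction.
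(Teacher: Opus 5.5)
Your final argument is correct and is essentially the paper's proof: you show $\cos(\sqrt{E}\ell)=0$, deduce that $f$ vanishes at every tooth base, and then keep $f$ on one inter-tooth segment while re-choosing the two tooth amplitudes to restore the Kirchhoff conditions. The only difference is how you get $\cos(\sqrt{E}\ell)=0$ (invertible transfer matrices when it is nonzero, versus the paper's use of the outermost tooth carrying $f$), and your intermediate requirement $f'(Ln_{1}^{-})=0$ is an unnecessary detour that the amplitude readjustment makes obsolete, so it can be deleted.
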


\begin{proof}
One direction is trivial. For the converse, let $f$ be a compactly
supported solution to $-\frac{d^{2}f}{dx^{2}}=Ef$ on $\Gamma_{\omega}$,
satisfying Neumann-Kirchhoff vertex conditions. Since $f$ is compactly
supported, choose the two farthest teeth on which it is supported,
and get that $f$ must vanish at the base of each of these two teeth
(i.e., the vertex which connects them to the $\Z$-graph). Since $f$
has a vanishing derivative at the other vertex of each of these teeth
(i.e., the boundary vertex), we get that $k\ell=\frac{\pi}{2}+\pi m$
for some $m\in\mathbb{N}$, where $k:=\sqrt{E}$ and $\ell$ is the
tooth length. This implies that $f$ in fact vanishes at the base
of all teeth of the comb. Now, choose a (horizontal) path $\tilde{e}$
between two adjacent teeth $e_{1},e_{2},$ such that $f$ does not
identically vanish on this path. At the bases of these teeth $v_{1},v_{2}$
we have that $f\left(v_{1}\right)=f\left(v_{2}\right)=0$, by the
argument given above. Construct a new eigenfunction $\tilde{f}$ as
follows:

1. At the horizontal path set $\tilde{f}=f$.

2. At the two mentioned teeth $e_{1},e_{2}$, set $\left.\tilde{f}\right|_{e_{i}}\left(x\right)=A_{i}\sin\left(\left(\frac{\frac{\pi}{2}+\pi m}{\ell}\right)x\right)$
for $i\in\left\{ 1,2\right\} $. Choose $A_{1},A_{2}$ so that $\left.\tilde{f}'\right|_{e_{i}}(v_{i})+\left.\tilde{f}'\right|_{\tilde{e}}(v_{i})=0$.

3. Extend $\tilde{f}$ to be identically $0$ everywhere else.

The resulting function is an $E$-eigenfunction supported between
two adjacent teeth of $\Gamma_{\omega}$.
\end{proof}
Towards the next lemma, we define the frequency of a subword. Let
$\Omega_{\alpha}$ be a Sturmian subshift, and $W=W_{0}...W_{k}$
a finite subword over the alphabet $\A=\{0,1\}$. Let $\omega\in\Omega_{\alpha}$.
We denote
\begin{equation}
\nu_{W}:=\lim_{N\rightarrow\infty}\frac{\#\set{n\in\left\{ 0,...,N-1\right\} }{\left.\omega\right|_{\left[n,n+k\right]}=W}}{N},\label{eq:word-freq-1}
\end{equation}
which is the frequency with which the subword $W$ occurs in $\omega$,
and is actually invariant with respect to $\omega\in\Omega$ due to
unique ergodicity (see Subsection~\ref{subsec: Dynamics} and \cite[prop. 4.4]{Baake2013}).
We therefore refer to $\nu_{W}$ as the frequency with which $W$
occurs in the subshift $\Omega_{\alpha}$.
\begin{lem}
\label{lem:Frequencies}Let $\alpha\in\left(0,1\right)\backslash\Q$
with the continued fraction expansion (\ref{eq:cf-expansion-2-1}).
Then there exist only two subwords of the form 
\begin{equation}
W=1\underset{k}{\underbrace{0....0}}1\label{eq:Word}
\end{equation}
which occur in the subshift $\Omega_{\alpha}$:
\end{lem}

\begin{enumerate}
\item A subword $W$ with $k=c_{1}$ zeros, which appear with frequency
$1-c_{1}\alpha$ in $\Omega_{\alpha}$.
\item A subword $W$ with $k=c_{1}-1$ zeros, which appears with frequency
$(c_{1}+1)\alpha-1$.
\end{enumerate}
\begin{proof}
Given a finite word $W$ we consider the following subset of $S^{1}$:
\begin{equation}
I_{W}:=\set{\theta\in S^{1}}{\left.\omega_{\alpha,\theta}\right|_{\left[0,...,\left|W\right|-1\right]}=W},\label{eq:Iw}
\end{equation}
where $\omega_{\alpha,\theta}(n):=\chi_{(1-\alpha,1]}\left(n\alpha+\theta\text{ mod \ensuremath{1}}\right)$
is a Sturmian (infinite) word such that $\omega_{\alpha,\theta}\in\Omega_{\alpha}$.
By \cite[sec. 2.2.3]{Lothaire2002} (see also \cite[sec. 5]{BaaGahMaz_ijc24}),
the frequency of the subword $W$ in $\Omega_{\alpha}$ is equal to
the Lebesgue measure of $I_{W}$. We therefore compute the Lebesgue
measure $I_{W}$ for all admissible subwords of the form $W=10....01$.
We accompany the proof with Figure~\ref{fig:cpt-efun}.

\begin{figure}
\includegraphics[scale=0.7]{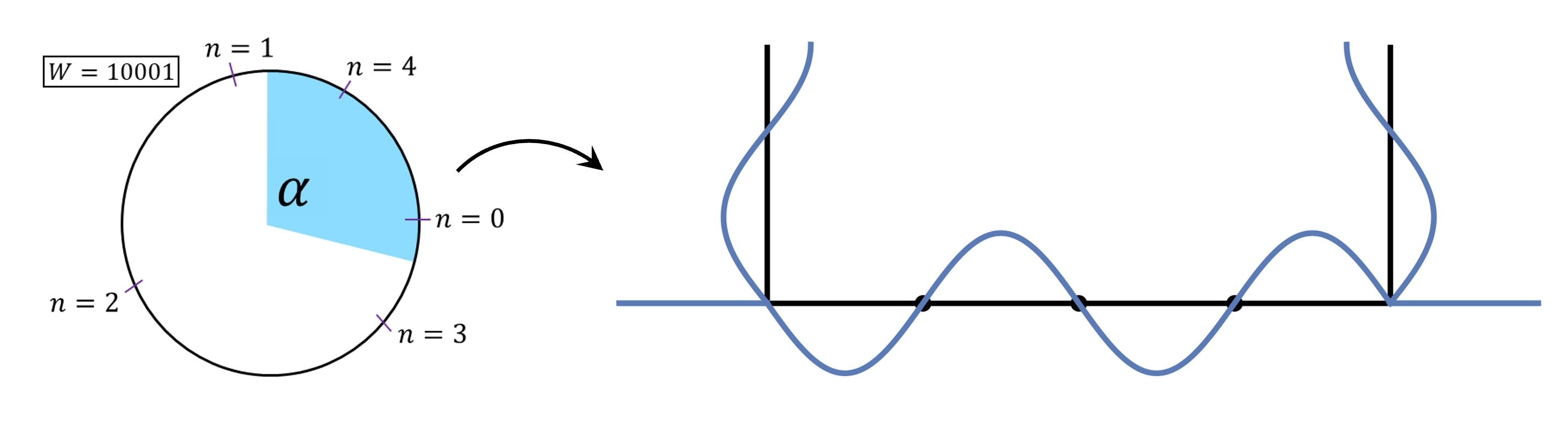}

\caption{Illustration of how subwords of the form $W=10...01$ give rise to
a compactly supported eigenfunction. Here, taking $\alpha\approx0.29$
and the initial angle to be $\theta=1-\alpha+\varepsilon$, the first
subword of length $5$ of the associated Sturmian sequence (\ref{eq:Sturmian-sequence})
gives rise to the compactly supported eigenfunction on the right.
\label{fig:cpt-efun}}
\end{figure}

First, note that 
\begin{equation}
\frac{1}{c_{1}+1}<\alpha<\frac{1}{c_{1}}.\label{eq:alpha-c1-bounds}
\end{equation}
By the definition of the sequence $\omega_{\alpha,\theta}$ we have
that $\omega_{\alpha,\theta}(0)=1$ iff $\theta\in(1-\alpha,1]$.
By (\ref{eq:alpha-c1-bounds}) we get that $n\alpha+\theta\in\left[\frac{n-1}{c_{1}+1}+1,\frac{n}{c_{1}}+1\right)$
for all $\theta\in\left[1-\alpha,1\right)$. In particular we get
that $n\alpha+\theta\mod 1\in\left[0,1-\alpha\right)$, for all $1\leq n\leq c_{1}-1$.
We conclude the argument above by 
\begin{equation}
\omega_{\alpha,\theta}(0)=1\quad\Leftrightarrow\quad\theta\in\left[1-\alpha,1\right)\quad\Leftrightarrow\quad\left.\omega_{\alpha,\theta}\right|_{\left[0,...,c_{1}-1\right]}=1\underset{c_{1}-1}{\underbrace{0....0}}.\label{eq:given-word}
\end{equation}
As we wish that $\omega_{\alpha,\theta}(0)=W(0)=1$, we may assume
the above equivalent conditions and split into two cases:
\begin{enumerate}
\item Assume $\omega_{\alpha,\theta}(c_{1})=0$, which is equivalent to
$c_{1}\alpha+\theta\mod 1\in[0,1-\alpha)$. In addition, from $\theta\in[1-\alpha,1)$
we have $c_{1}\alpha+\theta\in[1+(c_{1}-1)\alpha,1+c_{1}\alpha)$
and so $c_{1}\alpha+\theta\mod 1\in[(c_{1}-1)\alpha,c_{1}\alpha)$.
Intersecting both intervals gives $c_{1}\alpha+\theta\mod 1\in$ $[(c_{1}-1)\alpha,1-\alpha)$.
From here we get $(c_{1}+1)\alpha+\theta\mod 1\in[c_{1}\alpha,1)$,
and this implies $\omega_{\alpha,\theta}(c_{1}+1)=1$. Concluding
we get that in this case
\begin{equation}
\left.\omega_{\alpha,\theta}\right|_{\left[0,...,c_{1}+1\right]}=W=1\underset{c_{1}}{\underbrace{0....0}}1.\label{eq:given-word-1}
\end{equation}
We need also to know the range of $\theta$ for this case, namely
what is $I_{W}$ for the subword $W$ above. In the current case,
we got $c_{1}\alpha+\theta\mod 1\in$ $[(c_{1}-1)\alpha,1-\alpha)$.
This means that $\theta\in[-\alpha,1-(c_{1}+1)\alpha)$. The Lebesgue
measure of this interval is $1-c_{1}\alpha$, which is the frequency
of the word $W$ above.
\item Assume $\omega_{\alpha,\theta}(c_{1})=1$, which is equivalent to
$c_{1}\alpha+\theta\mod 1\in[1-\alpha,1)$. Repeating the arguments
as in the case above we get that $I_{W}=[2-(c_{1}+1)\alpha,1)$ for
$W=1\underset{c_{1}-1}{\underbrace{0....0}}1$. The Lebesgue measure
of this interval is $(c_{1}+1)\alpha-1$, which is the frequency of
that word.
\end{enumerate}
The two cases above exhaust all subwords of the form $W=10....01$
occurring in $\Omega_{\alpha}$.
\end{proof}
\begin{proof}[Proof of Theorem \ref{thm: IDS jumps Sturmian combs}]
 We start the proof by referring to Corollary \ref{cor:freq-jump},
whose hypothesis holds because all finite subwords of a Sturmian subshift
have positive frequency (see beginning of proof of Lemma \ref{lem:Frequencies},
or similar arguments in \cite[sec. 2.2.3]{Lothaire2002} and \cite[sec. 5]{BaaGahMaz_ijc24}).
We conclude from Corollary \ref{cor:freq-jump} that $N_{\Omega_{\alpha}}$
has a jump discontinuity at energy $E$ if and only if $E$ admits
a compactly supported eigenfunction. By Lemma~\ref{lem: compactly supported between teeth}
compactly supported $E$-eigenfunctions exist precisely when there
is an $E$-eigenfunction supported between two adjacent teeth of the
graph, see Figure \ref{fig:cpt-efun}.

Let $f$ be an $E$-eigenfunction which is supported between adjacent
teeth and denote $k:=\sqrt{E}$. The following holds:

(1) By the proof of Lemma\ \ref{lem: compactly supported between teeth},
$f$ vanishes at the base of each tooth, and $k\ell=\frac{\pi}{2}+\pi m$
for some $m\in\mathbb{N}$.

(2) By Lemma~\ref{lem:Frequencies}, the (horizontal) distance between
adjacent teeth in $\Gamma_{\omega}$ is either $c_{1}L$ or $(c_{1}+1)L$.
By (1) above, this implies that $k\left(c_{1}+1\right)L=\pi n$ or
$kc_{1}L=\pi n$ for $n\in\N$. It may be that both equalities hold
for the same value of $k$, but with different $n$ values.

We now examine the two cases in (2). First consider $k\left(c_{1}+1\right)L=\pi n$.
Combining this with the $k\ell=\frac{\pi}{2}+\pi m$, translates into
the following condition:
\begin{equation}
\frac{\ell}{L}=\frac{\left(2m+1\right)\left(c_{1}+1\right)}{2n},\,m,n\in\N,\label{eq:lL2}
\end{equation}
and the corresponding eigenvalue is $E=k^{2}=\left(\frac{\pi n}{L(c_{1}+1)}\right)^{2}$.
This is precisely the case demonstrated in Figure \ref{fig:cpt-efun}
with $n=4$, $m=1$, and $c_{1}=3$.

Similarly, the case $kc_{1}L=\pi n$ translates into the following
condition:

\begin{equation}
\frac{\ell}{L}=\frac{\left(2m+1\right)c_{1}}{2n},\,m,n\in\N,\label{eq:lL1}
\end{equation}
and the corresponding eigenvalue is $E=k^{2}=\left(\frac{\pi n}{Lc_{1}}\right)^{2}$.

These are exactly the two possible conditions on $\ell/L$ and the
corresponding energies in the theorem. It remains to compute the jump
size. Using (\ref{eq:truncation-IDS}), we count the number of compactly
supported eigenfunctions for the finite truncations $\left.H_{\omega}\right|_{[0,N]}$.
For the energy $E=\left(\frac{\pi n}{L\left(c_{1}+1\right)}\right)^{2}$,
we need to consider the subword $W=1\underset{c_{1}}{\underbrace{0....0}}1$
and the eigenfunctions supported on the corresponding subgraphs. These
eigenfunctions are linearly independent and so
\begin{equation}
\dim\ker\left(\left.H_{\omega}\right|_{[0,N]}-E\right)=\#\set{j\in\left\{ 0,...,N-c_{1}-1\right\} }{\left.\omega\right|_{[j,j+c_{1}+1]}=W}.\label{eq:dimker}
\end{equation}
 The jump in the IDS at $E$ is given by 
\begin{align}
 & \Delta N_{\Omega_{\alpha}}\left(E\right)\nonumber \\
 & =\lim_{N\rightarrow\infty}\frac{\#\set{\lambda\in\spec{\left.H_{\alpha}\right|_{[0,N]}}}{\lambda\leq E}-\#\set{\lambda\in\spec{\left.H_{\alpha}\right|_{[0,N]}}}{\lambda<E}}{\left|\left.\Gamma_{\alpha}\right|_{[0,N]}\right|}\nonumber \\
 & =\lim_{N\rightarrow\infty}\frac{\dim\ker\left(\left.H_{\alpha}\right|_{[0,N]}-E\right)}{\left|\left.\Gamma_{\alpha}\right|_{[0,N]}\right|}\nonumber \\
 & =\lim_{N\rightarrow\infty}\frac{\#\set{j\in\left\{ 0,...,N-c_{1}-1\right\} }{\left.\omega_{\alpha}\right|_{[j,j+c_{1}+1]}=W}}{\left|\left.\Gamma_{\alpha}\right|_{[0,N]}\right|}\nonumber \\
 & =\lim_{N\rightarrow\infty}\frac{\left(N-c_{1}\right)\nu_{W}}{NL+\alpha N\ell}=\frac{1-c_{1}\alpha}{L+\alpha\ell},\label{eq:jump-computation}
\end{align}
where the last line is obtained by Lemma \ref{lem:Frequencies} according
to which $\nu_{W}=1-c_{1}\alpha$ (see also the definition of word
frequency, (\ref{eq:word-freq-1})).

Repeating the same computation for the energy $E=\left(\frac{\pi n}{Lc_{1}}\right)^{2}$
whose eigenfunctions correspond to the subword $W=1\underset{c_{1}-1}{\underbrace{0....0}}1$.
The only change which is required in the computation is in using the
word frequency which is now $\nu_{W}=(c_{1}+1)\alpha-1$, and we get
\begin{align}
\Delta N_{\Omega_{\alpha}}\left(E\right) & =\frac{(c_{1}+1)\alpha-1}{L+\alpha\cdot\ell}.\label{eq:jump-computation-2}
\end{align}
It may happen that both (\ref{eq:lL2}) and (\ref{eq:lL1}) hold (but
for different $n,m$ values). Namely, 
\begin{equation}
\frac{\ell}{L}=\frac{\left(2m_{1}+1\right)\left(c_{1}+1\right)}{2n_{1}}=\frac{\left(2m_{2}+1\right)c_{1}}{2n_{2}}\label{eq:l/L}
\end{equation}
 for some $m_{1},n_{1},m_{2},n_{2}\in\N$. The corresponding energy
is then $E=\left(\frac{\pi n_{1}}{L(c_{1}+1)}\right)^{2}=\left(\frac{\pi n_{2}}{Lc_{1}}\right)^{2}$
and the associated eigenfunctions are supported on subgraphs corresponding
to both subwords $1\underset{c_{1}}{\underbrace{0\cdot....\cdot0}}1$
and $1\underset{c_{1}-1}{\underbrace{0\cdot....\cdot0}}1$. These
eigenfunctions are linearly independent and so the dimensions of the
corresponding eigenspaces sum up (and the same holds for the frequencies).
Therefore, the IDS jump at such energies is the sum of (\ref{eq:jump-computation})
and (\ref{eq:jump-computation-2}),
\begin{equation}
\Delta N_{\Omega_{\alpha}}\left(E\right)=\frac{1-c_{1}\alpha}{L+\alpha\cdot\ell}+\frac{(c_{1}+1)\alpha-1}{L+\alpha\cdot\ell}=\frac{\alpha}{L+\alpha\cdot\ell}.\label{eq:DN}
\end{equation}
\end{proof}

\appendix

\section{Proof of Proposition\ \ref{prop:IDS-existence}\label{sec:IDS-exists}}

In this appendix we prove Proposition \ \ref{prop:IDS-existence},
namely that the IDS for metric decorated $\Z$-graphs is well-defined
and given by the limit of the spectral counting functions. The discrete
case is analogous and omitted.

In the following, we denote by $\mathcal{F}$ the set of finite subsets
of $\mathbb{Z}$. For any subset $Q\in\mathcal{F}$, let $\left.H_{\omega}\right|_{Q}$
represent the restriction of the operator $H_{\omega}$ to the compact
subgraph $\left.\Gamma_{\omega}\right|_{Q}$ of the decorated $\Z$-graph
$\Gamma_{\omega}$ induced by $Q$ (as in Subsection \ref{subsec:IDS}).
We impose the Dirichlet condition at the boundary vertices of $\left.\Gamma_{\omega}\right|_{Q}$
where the decorated $\Z$-graph $\Gamma_{\omega}$ is truncated, although
other self-adjoint boundary conditions would yield the same results.
We prove the following Pastur--Shubin-type trace formula, which gives
Proposition\ \ref{prop:IDS-existence} as an immediate corollary:
\begin{prop}
\label{prop:PS-trace-formula}Let $\left(\Omega,T\right)$ be a uniquely
ergodic subshift. Denote by $\mu$ the unique shift-invariant probability
measure on $\Omega$. For almost every $\omega\in\Omega$, the sequence
of normalized counting functions $N_{\omega}^{(n)}\left(E\right)$
in (\ref{eq:truncation-IDS}) converges uniformly to a limiting function
$N_{\Omega}\left(E\right)$. For an arbitrary finite $Q\in\mathcal{F}$,
the function $N_{\Omega}$ can be expressed as
\begin{equation}
N_{\Omega}\left(E\right)=\frac{1}{\left|Q\right|\overline{L}\left(\Gamma_{\Omega}\right)}\int_{\Omega}tr\left[\chi_{\left.\Gamma_{\omega}\right|_{Q}}\chi_{(-\infty,E]}\left(H_{\omega}\right)\right]\rmd\mu\left(\omega\right),\label{eq:trace}
\end{equation}
where $\overline{L}\left(\Gamma_{\Omega}\right)$ denotes the average
metric length, as defined in (\ref{eq:norm-length}).
\end{prop}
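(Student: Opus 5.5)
The plan is to follow the Lenz--Veselić style approach (as in \cite{GruLenVes_jfa07}). We reduce the uniform convergence of the normalized counting functions to a uniform ergodic theorem for almost-additive functions on the subshift, and then identify the limit with the trace expression (\ref{eq:trace}) using the spectral shift between Dirichlet restrictions and the full operator.

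\textbf{Step 1: the counting function as an almost-additive set function.} For fixed $E$, put
\[
F_{\omega}(Q):=\#\set{\lambda\in\spec{\left.H_{\omega}\right|_{Q}}}{\lambda\le E}
\]
for intervals $Q\in\mathcal F$. I will verify three properties.
\begin{enumerate}
\item Covariance: $F_{T\omega}(Q)=F_{\omega}(Q+1)$, which follows from (\ref{eq:covariant}).
\item Boundedness: $F_\omega(Q)\le C(E)|Q|$, with $C(E)$ independent of $\omega$. This holds because each piece (decoration plus one horizontal edge) has at most $c(E)$ Dirichlet--Neumann eigenvalues below $E$, by bracketing and the uniform bounds on edge lengths.
\item Almost-additivity: for disjoint adjacent intervals $Q_1,Q_2$,
\[
|F_\omega(Q_1\cup Q_2)-F_\omega(Q_1)-F_\omega(Q_2)|\le K,
\]
with $K$ uniform in $\omega$ and $E$.
\end{enumerate}
Property 3 is the key input. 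Passing from $\left.H_\omega\right|_{Q_1\cup Q_2}$ to the decoupled direct sum changes the vertex conditions at a single vertex. This is a finite-rank (rank $\le 2$) perturbation in the resolvent sense, so interlacing gives $K=2$. Changing the boundary condition at the truncation points (Dirichlet versus Kirchhoff) also costs only a bounded number, which makes the result independent of the boundary conditions.

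\textbf{Step 2: uniform ergodic theorem.} Since $F_\omega(Q)$ depends only on the finite word $\omega|_Q$ and $(\Omega,T)$ is uniquely ergodic, the subadditive/almost-additive ergodic theorem for uniquely ergodic subshifts (Lenz's Banach-space-valued version) applies. It gives existence of the limit
\[
\lim_{n\to\infty}\frac{1}{n}F_\omega([0,n])=:\widetilde N(E),
\]
uniformly in $\omega$, and in fact in the Banach space of right-continuous bounded functions of $E$ on compacts. Uniformity in $E$ comes from treating $E\mapsto F_\omega(Q)(E)$ as an element of that space with the sup norm; the uniform constants $C(E)$ and $K$ on compact $E$-sets are exactly what is needed. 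Dividing by
\[
\frac{|\pa|}{n}\to\overline{L}(\Gamma_\Omega),
\]
which follows from the letter frequencies (\ref{eq:frequency}), gives $N_\omega^{(n)}\to N_\Omega:=\widetilde N/\overline L$.

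\textbf{Step 3: the trace formula.} Write
\[
G_\omega(Q):=tr\left[\chi_{\left.\Gamma_{\omega}\right|_{Q}}\chi_{(-\infty,E]}(H_\omega)\right].
\]
This quantity is exactly additive in $Q$ and covariant. By Birkhoff (or unique ergodicity, after approximating by continuous functions) we get
\[
\frac{1}{n}G_\omega([0,n])\to\frac{1}{|Q|}\int G_\omega(Q)\,\rmd\mu .
\]
It remains to show that
\[
\frac{1}{n}\bigl|G_\omega([0,n])-F_\omega([0,n])\bigr|\to0,
\]
at least at continuity points of $N_\Omega$ and then everywhere by right-continuity. I would do this in the standard way. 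First compare $tr[\chi_Q g(H_\omega)]$ with $tr[g(\left.H_\omega\right|_Q)]$ for smooth $g$ (or for $g=(H-z)^{-m}$) via the resolvent identity. The difference is localized near the two boundary points, and Combes--Thomas-type decay of the resolvent kernel bounds it by a constant independent of $n$. Then pass from smooth $g$ to $\chi_{(-\infty,E]}$ by monotone approximation from both sides.

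\textbf{Main obstacle.} The hardest step is this last boundary estimate, namely controlling the boundary effect in the trace uniformly for metric graphs with decorations. Concretely this means a uniform trace-class (Hilbert--Schmidt) bound for the resolvent difference localized near the cut vertices, and then removing the smoothing in $g$ without losing uniformity in $E$. I would first try the semigroup approach: compare heat kernels $e^{-tH_\omega}$ and $e^{-t\left.H_\omega\right|_Q}$ using Gaussian bounds on metric graphs with edge lengths bounded below. That gives convergence of Laplace transforms, which suffices for convergence at continuity points; uniformity is then recovered from Step 2.
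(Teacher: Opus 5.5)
Your overall architecture matches the paper's. An almost-additive ergodic theorem of Lenz--M\"uller--Veseli\'c type gives existence of the limit, and the trace formula comes from comparing $\mathrm{tr}[\chi_{\Gamma_\omega|_Q}\,g(H_\omega)]$ with $\mathrm{tr}[g(H_\omega|_Q)]$ for resolvent-type $g$.

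\textbf{The gap: uniformity in $E$.} There is one substantive shortfall in Steps 1--2. You feed the counting function $F_\omega(Q)(E)$ itself into the ergodic theorem, and this forces you onto compact $E$-ranges. The reason is that $F_\omega(Q)(E)$ grows like $|\Gamma_\omega|_Q|\sqrt{E}/\pi$, so it is not a bounded function on $\mathbb{R}$, and the hypothesis $\|F(Q)\|\le C|Q|$ fails in the sup norm over all $E$. What you actually prove is locally uniform convergence. The proposition asserts uniform convergence in $E\in\mathbb{R}$, which is what the paper's sup-norm argument is designed to give. The missing idea is the paper's subtraction of an explicit reference counting function:
\begin{enumerate}
\item Impose Dirichlet conditions at the midpoints of all horizontal edges of $\Gamma_\omega|_Q$. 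The decoupled operator then splits into single pieces, and its counting function is $\sum_a\#_a^Q(\omega)\,n_D^a(E)$.
\item Apply the ergodic theorem to the spectral shift $\xi_\omega^Q=n_\omega^Q-\sum_a\#_a^Q(\omega)\,n_D^a$ rather than to $n_\omega^Q$.
\end{enumerate}
Since $\xi_\omega^Q$ arises from changing vertex conditions at about $|Q|$ points, it is a finite-rank effect bounded by $C|Q|$ uniformly in $E\in\mathbb{R}$. So the ergodic theorem runs in $\|\cdot\|_\infty$ over the whole line. The reference part is handled by the letter frequencies, giving $N_\Omega=\overline{L}(\Gamma_\Omega)^{-1}\bigl(\sum_a\nu_a n_D^a+\lim_j\xi_\omega^{Q_j}/|Q_j|\bigr)$.

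\textbf{Step 3 is sound but heavier than needed.} The ``main obstacle'' you identify disappears once you use, in the trace comparison, the same finite-rank fact you already invoked in Step 1. The paper sets $D=(H_\omega-z)^{-1}-(H_\omega|_Q\oplus H_\omega|_{\mathbb{Z}\setminus Q}-z)^{-1}$. The two operators differ only in the vertex conditions at the cut, so $\mathrm{rank}\,D\le|\partial Q|$, and also $\|D\|\le 2/|\mathrm{Im}\,z|$. Hence $|\mathrm{tr}(\chi_{\Gamma_\omega|_Q}D)|\le 2|\partial Q|/|\mathrm{Im}\,z|$, which is $o(|Q|)$ along van Hove sequences. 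No Combes--Thomas decay, kernel trace-ideal estimates, or heat-kernel bounds are needed. One then passes from resolvents to $\chi_{(-\infty,E]}$ by a Stone--Weierstrass (vague convergence) argument at continuity points, and right-continuity handles the rest, as you say.

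For identifying the limit, the paper does not use Birkhoff. It integrates over $\Omega$ and uses two facts: the right-hand side of the trace formula is independent of $Q$ (by translation invariance of $\mu$ and cyclicity of the trace, reducing to singletons), and $N_\Omega$ does not depend on $\omega$. Your Birkhoff route applied to $\omega\mapsto\mathrm{tr}[\chi_{\Gamma_\omega|_{\{0\}}}\chi_{(-\infty,E]}(H_\omega)]$ also works, provided you check measurability and boundedness of that function.
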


Our proof relies on an adaptation of the method presented in \cite{GruLenVes_jfa07},
which utilizes an ergodic theorem proven in \cite{LenMulVes_pos08}
(see also \cite{GruLenVes_incol08}). Notably, the proof can be generalized
to many other graph families, including graphs with random potentials
and vertex conditions, higher dimensional decorated graphs (i.e. $\Z^{d}$
with $d>1$), and tiling graphs, as studied in \cite{BanSof_prep24a}.

\subsection{Background and definitions}

We start by introducing essential definitions, and refer to \cite{LenMulVes_pos08}
for more details.

We denote the spectral counting function (and normalized spectral
counting function) for $H_{\omega}^{Q}$ by

\begin{align}
 & n_{\omega}^{Q}\left(E\right):=\#\left\{ \lambda\in\spec{\left.H_{\omega}\right|_{Q}}:\lambda\leq E\right\} ,\label{eq:nQ}\\
 & N_{\omega}^{Q}\left(E\right):=\frac{1}{\left|\left.\Gamma_{\omega}\right|_{Q}\right|}n_{\omega}^{Q}\left(E\right).\label{eq:NQ}
\end{align}
To decouple the graph into its decorations, we further introduce the
operator $\left.H_{\omega,D}\right|_{Q}$, obtained by imposing Dirichlet
conditions at the centers of all edges of the horizontal path. The
corresponding spectral counting function is
\begin{equation}
n_{\omega,D}^{Q}\left(E\right):=\#\left\{ \lambda\in\spec{\left.H_{\omega,D}\right|_{Q}}:\lambda\leq E\right\} .\label{eq:nQD}
\end{equation}
To simplify notation, let $n_{D}^{a}\left(E\right)$ represent the
counting function for the operator $n_{\omega,D}^{Q}$ when $Q=\left\{ a\right\} $
for $a\in\A$. The overall counting function for $H_{\omega,D}^{Q}$
can then be written as:
\begin{equation}
n_{\omega,D}^{Q}\left(E\right)=\sum_{a\in\mathcal{A}}\#_{a}^{Q}\left(\omega\right)n_{D}^{a}(E),\label{eq:nQD2}
\end{equation}
 where $\#_{a}^{Q}\left(\omega\right)$ is the number of occurrences
of the letter $a$ in the subword $\omega|_{Q}$ (extending the definition
of the letter counting function from (\ref{eq:counting})). With the
above, we define the \textit{spectral shift} function
\begin{equation}
\xi_{\omega}^{Q}\left(E\right):=n_{\omega}^{Q}\left(E\right)-n_{\omega,D}^{Q}\left(E\right)=n_{\omega}^{Q}\left(E\right)-\sum_{a\in\mathcal{A}}\#_{a}^{Q}\left(\omega\right)n_{D}^{a}(E).\label{eq:S-Shift}
\end{equation}

Lastly, we provide a few definitions which are required for the proofs.
\begin{defn}
A \textit{van Hove} sequence is a sequence $\left(Q_{j}\right)_{j\in\mathbb{N}}\subset\mathcal{F}$
such that
\begin{equation}
\lim_{j\rightarrow\infty}\frac{\left|\partial Q_{j}\right|}{\left|Q_{j}\right|}=0,\label{eq:Hove}
\end{equation}
where the boundary $\partial Q$ is defined as
\begin{equation}
\partial Q:=\set{n\in Q}{n+1\notin Q\text{ or }n-1\notin Q}.\label{eq:bdry}
\end{equation}
\end{defn}

\begin{defn}
A function $b:\mathcal{F}\rightarrow[0,\infty)$ is called a \textit{boundary
term} if
\end{defn}

\begin{enumerate}
\item $b\left(Q\right)=b\left(m+Q\right)$ for all $m\in\mathbb{Z}$ and
$Q\in\mathcal{F}$,
\item there exists $D>0$ such that $b\left(Q\right)\leq D\left|Q\right|$
for all $Q\in\mathcal{F}$,
\item for any van Hove sequence $\left(Q_{j}\right)_{j\in\N}$, the following
holds:
\begin{equation}
\lim_{j\rightarrow\infty}\frac{b\left(Q_{j}\right)}{\left|Q_{j}\right|}=0.\label{eq:Boundary-map}
\end{equation}
\end{enumerate}
\begin{defn}
~
\begin{enumerate}
\item Let $X$ be a Banach space. $F:\mathcal{F}\rightarrow X$ is called
almost-additive if there exists a boundary term $b$ such that
\begin{equation}
\left\Vert F\left(\cup_{k=1}^{l}Q_{k}\right)-\sum_{k=1}^{l}F\left(Q_{k}\right)\right\Vert \leq\sum_{k=1}^{l}b\left(Q_{k}\right)\label{eq:AAfunction}
\end{equation}
 for all $l\in\mathbb{N}$ and pairwise disjoint sets $Q_{k}$.
\item For a subshift element $\omega\in\Omega$, $F$ is said to be $\omega$-equivariant
if $F\left(Q\right)$ depends only on the local pattern of $\omega$
at $Q$, i.e.,
\begin{equation}
F\left(Q\right)=F\left(m+Q\right),\label{eq:equivariant}
\end{equation}
 whenever $m\in\mathbb{Z}$ and $Q$ obeys $\omega|_{m+Q}=\omega|_{Q}$.
\item $F$ is said to be bounded if there exists $C>0$ such that
\begin{equation}
\left\Vert F\left(Q\right)\right\Vert \leq C\left|Q\right|.\label{eq:bdd-function}
\end{equation}
\end{enumerate}
\end{defn}

\subsection{Proving the main result}

The following paraphrase on the ergodic theorem \cite[thm. 1]{LenMulVes_pos08}
is a main key to the proof of Proposition \ref{prop:PS-trace-formula}:
\begin{thm}
\label{thm:Ergodic}Let $\left(\Omega,T\right)$ be a uniquely ergodic
subshift over $\mathcal{A}$, and let $\omega\in\Omega$. Let $\left(X,\left\Vert \cdot\right\Vert \right)$
be a Banach space, and let $\left(Q_{j}\right)_{j\in\mathbb{N}}$
be a van Hove sequence. Suppose that $F:\mathcal{F}\rightarrow X$
is an $\omega$-equivariant, almost-additive bounded function. Then
the following limit exists:
\begin{equation}
\overline{F}:=\lim_{j\rightarrow\infty}\frac{F\left(Q_{j}\right)}{\left|Q_{j}\right|}.\label{eq:ergodic}
\end{equation}
\end{thm}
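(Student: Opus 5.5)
The strategy is to compare $F(Q_{j})$ with an average of $F$ over short intervals, and then use unique ergodicity to count how often each short pattern occurs. This gives, for each fixed scale $m$, a candidate $a_{m}\in X$ that approximates $F(Q_{j})/|Q_{j}|$ for all large $j$, up to an error tending to $0$ as $m\to\infty$. Completeness of $X$ then yields the limit $\overline{F}$.

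\textbf{Step 1: reduction to long intervals.} Every finite $Q\subset\Z$ is a disjoint union of maximal intervals $I_{1},\dots,I_{k}$. Each maximal interval contains at least one point of $\partial Q$, so $k\le|\partial Q|$. Fix $R\in\N$. The intervals of length $<R$ have total size at most $R|\partial Q_{j}|=o(|Q_{j}|)$ by the van Hove property (\ref{eq:Hove}). Almost-additivity (\ref{eq:AAfunction}) gives
\[
\Bigl\Vert F(Q)-\sum_{i}F(I_{i})\Bigr\Vert\le\sum_{i}b(I_{i}).
\]
The boundedness (\ref{eq:bdd-function}) kills the short intervals, and $b(I)\le D|I|$ kills their boundary terms. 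For the long intervals, I would use $b(I)/|I|\to0$ along intervals of growing length. This is property (3) of a boundary term applied to the van Hove sequence of intervals $[0,n]$, combined with translation invariance, property (1). Hence it suffices to control $F(I)/|I|$ for long intervals $I=[p,p+n-1]$, uniformly in $p$.

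\textbf{Step 2: a fixed-scale approximation on an interval.} Fix $m$. For each offset $s\in\{0,\dots,m-1\}$, split $I$ into consecutive blocks $[p+s+qm,\,p+s+(q+1)m-1]$, plus at most two leftover pieces of length $<m$. By almost-additivity and boundedness,
\[
\Bigl\Vert F(I)-\sum_{\text{blocks }B}F(B)\Bigr\Vert\le\frac{n}{m}\,b([0,m-1])+O(Cm),
\]
using $b(B)=b([0,m-1])$ by translation invariance. Averaging over the $m$ offsets, the sum over blocks becomes
\[
\frac{1}{m}\sum_{r\in I}F([r,r+m-1])
\]
up to an $O(Cm)$ error. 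By $\omega$-equivariance, $F([r,r+m-1])$ depends only on the pattern $P=\omega|_{[r,r+m-1]}$; write it as $F(P)$.

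\textbf{Step 3: unique ergodicity.} By unique ergodicity (Oxtoby), the number of occurrences of each pattern $P$ of length $m$ in an interval of length $n$, divided by $n$, converges to $\nu_{P}$ uniformly in the position of the interval. There are finitely many such patterns. Thus for $|I|$ large enough,
\[
\Bigl\Vert\frac{F(I)}{|I|}-a_{m}\Bigr\Vert\le\frac{b([0,m-1])}{m}+o(1),\qquad a_{m}:=\frac{1}{m}\sum_{|P|=m}\nu_{P}F(P).
\]
Combining with Step 1 gives
\[
\limsup_{j}\Bigl\Vert\frac{F(Q_{j})}{|Q_{j}|}-a_{m}\Bigr\Vert\le\frac{b([0,m-1])}{m}=:\varepsilon_{m}\to0.
\]
It follows that $\Vert a_{m}-a_{m'}\Vert\le\varepsilon_{m}+\varepsilon_{m'}$, so $(a_{m})$ is Cauchy and converges to some $\overline{F}\in X$. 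Then $F(Q_{j})/|Q_{j}|\to\overline{F}$.

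\textbf{Main obstacle.} The delicate point is Step 2. Partitioning into blocks only along a single residue class would require unique ergodicity of $T^{m}$, which may fail. Averaging over all $m$ offsets is exactly what turns the block sum into a sum over every position $r\in I$, where the frequencies under $T$ apply. The second subtle point is the uniformity in the position $p$ in Step 3, since the intervals making up $Q_{j}$ can sit anywhere. For this I would use the uniform version of the ergodic theorem for uniquely ergodic systems, applied to the continuous indicator functions of cylinder sets.
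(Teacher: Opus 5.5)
Your proof is correct. It differs from the paper mainly in that the paper gives no argument here: it invokes the ergodic theorem of Lenz--M\"uller--Veseli\'c as a black box, and only remarks that its hypothesis (existence of all pattern frequencies) follows from unique ergodicity.

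Your Step 2 is the standard mechanism behind such almost-additive ergodic theorems. Averaging the block decomposition over all $m$ offsets turns block sums into a sum over every position $r\in I$, so the pattern counts are governed by $T$ rather than $T^{m}$. In substance, then, you are reproving the cited result in dimension one.

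Your version adds two things.
\begin{itemize}
\item It checks explicitly the point the paper's remark leaves implicit: unique ergodicity gives pattern frequencies along an \emph{arbitrary} van Hove sequence of finite sets, not just along intervals. Your splitting of $Q_{j}$ into at most $|\partial Q_{j}|$ maximal intervals, combined with uniform convergence of Birkhoff averages of cylinder indicators at every point $T^{p}\omega$, is exactly what makes this work.
\item It identifies the limit as $\overline{F}=\lim_{m\to\infty}\frac{1}{m}\sum_{|P|=m}\nu_{P}F(P)$, which is independent of the van Hove sequence.
\end{itemize}

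The citation, in turn, buys generality: colorings of $\Z^{d}$ admitting frequencies, with no underlying dynamical system needed. Your maximal-interval reduction is specific to $\Z$.

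Two small bookkeeping points should be made explicit. First, set $F(P):=0$ for words $P$ not occurring in $\omega$; this is harmless, since uniform convergence along $\omega$ forces $\nu_{P}=0$ for them. Second, use $\|a_{m}\|\le C$, which follows from $\sum_{|P|=m}\nu_{P}=1$, when you replace the total length of the long intervals by $|Q_{j}|$.
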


\begin{rem*}
\cite[thm. 1]{LenMulVes_pos08} also assumes existence of all subword
frequencies, which here follows from unique ergodicity (see \cite[prop. 4.4]{Baake2013},
\cite{Oxtoby1952}).
\end{rem*}
The following lemma provides the function $F$ on which Theorem\ \ref{thm:Ergodic}
is applied.
\begin{lem}
\label{lem:Spectral shift}On the Banach space $\left(X,\left\Vert \cdot\right\Vert _{\infty}\right)$
of right-continuous bounded functions, define the function
\begin{align}
 & F:\mathcal{F}\rightarrow X,\label{eq:ss-ergodic}\\
 & \left(F\left(Q\right)\right)\left(E\right)=\frac{\xi_{\omega}^{Q}\left(E\right)}{\overline{L}\left(\Gamma_{\Omega}\right)},\label{eq:ss-ergodic-2}
\end{align}
where $\xi_{\omega}^{Q}$ is the spectral shift function (\ref{eq:S-Shift}).
Then $F$ is $\omega$-equivariant, bounded, and almost-additive.
\end{lem}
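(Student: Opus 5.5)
We need to prove Lemma \ref{lem:Spectral shift}: $F(Q)=\xi_\omega^Q/\overline{L}$ is $\omega$-equivariant, bounded and almost-additive. The guiding principle is that $\left.H_{\omega,D}\right|_{Q}$ differs from $\left.H_{\omega}\right|_{Q}$ only by changing vertex conditions at finitely many points. Each such change is a finite-rank perturbation of the resolvent (equivalently, of the quadratic forms), so it shifts the eigenvalue counting function by a bounded amount.

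\textbf{Equivariance.} If $\omega|_{m+Q}=\omega|_{Q}$, then the compact graphs $\left.\Gamma_{\omega}\right|_{Q}$ and $\left.\Gamma_{\omega}\right|_{m+Q}$ are isometric. The isometry preserves the Kirchhoff conditions, the Dirichlet conditions at the cut points, and the Dirichlet points at the edge midpoints. Hence both $n_{\omega}^{Q}$ and $n_{\omega,D}^{Q}$ are translation invariant, and so is $\xi_{\omega}^{Q}$. Right-continuity and boundedness in $E$ will follow from the bound below.

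\textbf{Boundedness.} I would use the standard interlacing fact for quantum graphs (e.g.\ \cite{BerKuc_graphs}): imposing a Dirichlet condition at one point, or changing the condition at one vertex of degree $d$, changes the counting function by at most $d$ (a rank-$\le d$ form perturbation). The operator $\left.H_{\omega,D}\right|_{Q}$ is obtained from $\left.H_{\omega}\right|_{Q}$ by inserting at most $|Q|+1$ midpoint Dirichlet conditions on the horizontal path. Therefore $|\xi_{\omega}^{Q}(E)|\le |Q|+1\le 2|Q|$ uniformly in $E$, so $\|F(Q)\|_\infty\le 2|Q|/\overline{L}$.

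\textbf{Almost-additivity.} Let $Q=\cup_{k=1}^{l}Q_k$ be a disjoint union. Since $n_{\omega,D}^{Q}$ is additive in $Q$ by \eqref{eq:nQD2} (the decoupled operator splits over letters), we have $\xi^{Q}-\sum_k\xi^{Q_k}=n_{\omega}^{Q}-\sum_k n_{\omega}^{Q_k}$. The operator $\bigoplus_k \left.H_{\omega}\right|_{Q_k}$ differs from $\left.H_{\omega}\right|_{Q}$ only by the Dirichlet conditions at the cut points between the components, together with possibly extra edges joining adjacent components. Each cut sits at a boundary point of some $Q_k$. To handle the extra edges cleanly, I would compare both operators to the one obtained by placing Dirichlet conditions at every vertex of $\partial Q_k$ and at the midpoints of the adjacent connecting edges; the number of conditions involved is $O(\sum_k|\partial Q_k|)$. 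Interlacing then gives
\[
\left|n_{\omega}^{Q}-\sum_k n_{\omega}^{Q_k}\right|\le \sum_k b(Q_k),\qquad b(Q):=C|\partial Q|,
\]
with $C$ depending only on the maximal degree of the base vertices, $\max_a\deg_{\Gamma_a}(v_a)+2$. It remains to check that $b$ is a boundary term: it is translation invariant, satisfies $b(Q)\le C|Q|$, and $b(Q_j)/|Q_j|\to0$ along any van Hove sequence by \eqref{eq:Hove}.

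\textbf{Main obstacle.} The delicate point is the bookkeeping in the almost-additivity step. Dirichlet conditions are imposed at the base vertices of boundary decorations, which have higher degree, and components $Q_k$ may be adjacent or separated by gaps. I expect this is resolved by inserting Dirichlet conditions at horizontal-edge interior points adjacent to $\partial Q_k$ rather than at vertices, so that each inserted point has degree $2$ and contributes rank one. With that choice, each comparison costs at most a constant times the number of boundary sites.
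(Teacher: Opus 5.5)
Your overall route matches the paper's. The paper's proof only asserts that $\left.H_{\omega}\right|_{Q}$, $\left.H_{\omega,D}\right|_{Q}$ and $\bigoplus_{k}\left.H_{\omega}\right|_{Q_{k}}$ differ by finite-rank perturbations, citing \cite[lem. 22]{GruLenVes_jfa07}. Your equivariance argument and the estimate $|n_{\omega}^{Q}-n_{\omega,D}^{Q}|\le|Q|+1$ are fine.

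\textbf{The gap: almost-additivity.} It fails exactly at the point you flag. With the truncation you use (cut at vertices), the edge $[mL,(m+1)L]$ belongs to $\left.\Gamma_{\omega}\right|_{Q}$ only if $m,m+1\in Q$. So $\left.\Gamma_{\omega}\right|_{Q}$ contains the edges connecting adjacent components, while $\bigsqcup_{k}\left.\Gamma_{\omega}\right|_{Q_{k}}$ does not. The two operators act on metric graphs of different total length, and no finite set of changed vertex conditions relates them.

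Your common comparison operator does not repair this. Impose Dirichlet conditions at the vertices of $\partial Q_{k}$ and at the midpoints of the connecting edges. Each connecting edge then splits off as two Dirichlet intervals of length $L/2$, each contributing $\lfloor\sqrt{E}L/(2\pi)\rfloor$ eigenvalues below $E$ that have no counterpart in $\bigoplus_{k}\left.H_{\omega}\right|_{Q_{k}}$. Hence $n_{\omega}^{Q}-\sum_{k}n_{\omega}^{Q_{k}}$ grows like $\sqrt{E}$ times the number of connecting edges. Already for $Q=\{0,1\}$, $Q_{1}=\{0\}$, $Q_{2}=\{1\}$ the difference is, up to $O(1)$, $\#\{j\ge1:(\pi j/L)^{2}\le E\}$. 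No boundary term bounds this in $\|\cdot\|_{\infty}$.

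The same mismatch affects your use of (\ref{eq:nQD2}). In this convention the decoupled pieces at the ends of a component carry fewer horizontal half-edges than interior pieces. So $n_{\omega,D}^{Q}$ is not a sum of translation-invariant single-site counts; the error is again of order $\sqrt{E}$. You therefore cannot use both your bound for the actual decoupled operator and exact additivity of $n_{\omega,D}^{Q}$. The paper's one-line sketch is silent on this, and its appeal to \cite{GruLenVes_jfa07} implicitly relies on the decomposition below.

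\textbf{The missing idea: make the graphs additive in $Q$.} Let the cell of site $m$ be the decoration at $mL$ together with $[(m-\tfrac12)L,(m+\tfrac12)L]$. Let $\left.\Gamma_{\omega}\right|_{Q}$ be the union of the cells with $m\in Q$, with Dirichlet conditions at the outer cut points. Then:
\begin{itemize}
\item $\left.\Gamma_{\omega}\right|_{Q}$ is $\bigsqcup_{k}\left.\Gamma_{\omega}\right|_{Q_{k}}$ glued at the midpoints between adjacent components.
\item So $\bigoplus_{k}\left.H_{\omega}\right|_{Q_{k}}$ is $\left.H_{\omega}\right|_{Q}$ with extra Dirichlet conditions at no more than $\sum_{k}|\partial Q_{k}|$ degree-two points. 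Each is a codimension-one restriction of the form domain.
\item Interlacing gives $|\xi_{\omega}^{Q}-\sum_{k}\xi_{\omega}^{Q_{k}}|\le\sum_{k}|\partial Q_{k}|$, i.e. $b(Q)=|\partial Q|$.
\item In this convention (\ref{eq:nQD2}) is exact, with $n_{D}^{a}$ the Dirichlet counting function of a single cell, and $|\xi_{\omega}^{Q}|\le|Q|$.
\item No condition at a high-degree base vertex ever appears.
\end{itemize}
Switching to cells changes each truncated graph only by two half-edges, which alters the normalized counting functions by $O(1/n)$ uniformly in $E$; the IDS is unchanged. Your closing suggestion of inserting Dirichlet points inside horizontal edges is the right instinct, but it only works once the truncation itself is made at those points.
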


The proof is similar to \cite[lem. 22]{GruLenVes_jfa07}. Boundedness
follows since $H_{\omega}^{Q}$ and $H_{\omega,D}^{Q}$ differ by
a finite rank perturbation. Similarly, almost-additivity holds since
the disjoint decomposition $Q=\sqcup_{k=1}^{l}Q_{k}$ yields finite
rank perturbations between the associated operators.

The proof of Proposition \ref{prop:PS-trace-formula} now follows,
using conceptually the same arguments as in \cite[thm. 3]{GruLenVes_jfa07}.
\begin{proof}[Proof of Proposition \ref{prop:PS-trace-formula}]
 By Lemma \ref{lem:Spectral shift}, the function $\left(F\left(Q\right)\right)\left(E\right)=\xi_{\omega}^{Q}\left(E\right)/\overline{L}\left(\Gamma_{\Omega}\right)$
is $\omega$-equivariant, almost-additive and bounded. Applying (\ref{eq:S-Shift})
along a van Hove sequence $\left(Q_{j}\right)_{j\in\N}$, we get for
all $j\in\N$
\begin{equation}
n_{\omega}^{Q_{j}}\left(E\right)=\xi_{\omega}^{Q_{j}}\left(E\right)+\sum_{a\in\mathcal{A}}\#_{a}^{Q_{j}}\left(\omega\right)n_{D}^{a}\left(E\right).\label{eq:nQD3}
\end{equation}

Dividing both sides by $\left|\Gamma_{\omega}^{Q_{j}}\right|$ and
taking the limit $j\rightarrow\infty$, we obtain using (\ref{eq:truncation-IDS}):
\begin{align}
N_{\omega}\left(E\right) & =\lim_{j\rightarrow\infty}\frac{n_{\omega}^{Q_{j}}\left(E\right)}{\left|\Gamma_{\omega}^{Q_{j}}\right|}=_{\text{(\ref{eq:norm-length-2})}}\frac{1}{\overline{L}\left(\Gamma_{\Omega}\right)}\lim_{j\rightarrow\infty}\frac{n_{\omega}^{Q_{j}}\left(E\right)}{\left|Q_{j}\right|}\nonumber \\
 & =\frac{1}{\overline{L}\left(\Gamma_{\Omega}\right)}\lim_{j\rightarrow\infty}\left(\frac{\xi_{\omega}^{Q_{j}}\left(E\right)}{\left|Q_{j}\right|}+\frac{1}{\left|Q_{j}\right|}\sum_{a\in\mathcal{A}}\#_{a}^{Q_{j}}\left(\omega\right)n_{D}^{a}(E)\right)\nonumber \\
 & =\frac{1}{\overline{L}\left(\Gamma_{\Omega}\right)}\lim_{j\rightarrow\infty}\frac{\xi_{\omega}^{Q_{j}}\left(E\right)}{\left|Q_{j}\right|}+\frac{1}{\overline{L}\left(\Gamma_{\Omega}\right)}\sum_{a\in\mathcal{A}}\freq an_{D}^{a}\left(E\right),\label{eq:ergodic-IDS}
\end{align}
and the limit exists by Theorem\ \ref{thm:Ergodic}. The convergence
is uniform, since the Banach-space norm in Lemma\ \ref{lem:Spectral shift}
is $\left\Vert \cdot\right\Vert _{\infty}$.

We now prove formula (\ref{eq:trace}), beginning with the $Q$-independence
of the right-hand side. This is clearly true if $Q=\{m\}$, i.e.,
$Q$ is a singleton. This follows from the translation invariance
of the ergodic measure $\mu$, since for all $m\in\Z$:
\begin{align}
 & \int_{\Omega}tr\left[\chi_{\Gamma_{\omega}^{\left\{ m\right\} }}\chi_{(-\infty,E]}\left(H_{\omega}\right)\right]d\mu\left(\omega\right)\nonumber \\
 & =\int_{\Omega}tr\left[\chi_{\Gamma_{T\omega}^{\left\{ m\right\} }}\chi_{(-\infty,E]}\left(H_{T\omega}\right)\right]d\mu\left(\omega\right)\nonumber \\
 & =_{\left(\ref{eq:covariant}\right)}\int_{\Omega}tr\left[\chi_{\Gamma_{T\omega}^{\left\{ m\right\} }}\chi_{(-\infty,E]}\left(T^{-1}H_{\omega}T\right)\right]d\mu\left(\omega\right)\nonumber \\
 & =\int_{\Omega}tr\left[T^{-1}\chi_{\Gamma_{T\omega}^{\left\{ m\right\} }}\chi_{(-\infty,E]}\left(H_{\omega}\right)T\right]d\mu\left(\omega\right)\nonumber \\
 & =_{\left(*\right)}\int_{\Omega}tr\left[\chi_{\Gamma_{T\omega}^{\left\{ m\right\} }}\chi_{(-\infty,E]}\left(H_{\omega}\right)\right]d\mu\left(\omega\right)\nonumber \\
 & =\int_{\Omega}tr\left[\chi_{\Gamma_{\omega}^{\left\{ m+1\right\} }}\chi_{(-\infty,E]}\left(H_{\omega}\right)\right]d\mu\left(\omega\right),\label{eq:point-invariance}
\end{align}
where $\left(*\right)$ follows from the cyclic property of the trace.
For arbitrary $Q$, the claim follows by writing $\frac{1}{\left|Q\right|}\chi_{\Gamma_{\omega}^{Q}}=\frac{1}{\left|Q\right|}\sum_{m\in Q}\chi_{\Gamma_{\omega}^{\left\{ m\right\} }}$.

To prove (\ref{eq:trace}), we first show that
\begin{align}
\lim_{j\rightarrow\infty}\left[\frac{1}{\left|Q_{j}\right|\overline{L}\left(\Gamma_{\Omega}\right)}tr\left\{ \chi_{Q_{j}}f_{z}\left(H_{\omega}\right)\right\} -\frac{1}{\left|\left.\Gamma_{\omega}\right|_{Q_{j}}\right|}tr\left\{ f_{z}\left(\left.H_{\omega}\right|_{Q_{j}}\right)\right\} \right] & =0,\label{eq:trace-id}
\end{align}
for all $f_{z}$ of the form $f_{z}\left(t\right)=\left(t-z\right)^{-1}$
for $z\in\C\backslash\R$, where we use $\chi_{Q_{j}}$ as an abbreviated
notation for $\chi_{\left.\Gamma_{\omega}\right|_{Q_{j}}}$.

For a given box $Q=Q_{j}$, the graph $\Gamma_{\omega}$ naturally
splits into two components, $\left.\Gamma_{\omega}\right|_{Q}$ and
$\left.\Gamma_{\omega}\right|_{\Z\backslash Q}$. In this case, the
operators $H_{\omega}$ and $\left.H_{\omega}\right|_{Q}\oplus\left.H_{\omega}\right|_{\Z\backslash Q}$
only differ by the boundary conditions imposed at the set $\partial Q$.
We consider the operator
\begin{equation}
D:=f_{z}\left(H_{\omega}\right)-f_{z}\left(\left.H_{\omega}\right|_{Q}\oplus\left.H_{\omega}\right|_{\Z\backslash Q}\right).\label{eq:Res-D}
\end{equation}
As it is a difference of two self-adjoint operator and $z\notin\R$,
we get $\left\Vert D\right\Vert \leq2\left|Im\left(z\right)\right|^{-1}$.
In addition, by the second resolvent identity we get $\mathop{rank}D\leq\left|\partial Q\right|$.
Both bounds imply $\left|\mathop{tr}D\right|\leq2\left|\partial Q\right|\left|Im\left(z\right)\right|^{-1}$.
We use this bound to get
\begin{align}
\lim_{j\to\infty} & \left|\frac{1}{|Q_{j}|\overline{L}(\Gamma_{\Omega})}tr\!\left\{ \chi_{Q_{j}}f_{z}(H_{\omega})\right\} -\frac{1}{|\Gamma_{\omega}|_{Q_{j}}|}tr\!\left\{ f_{z}\!\left(H_{\omega}|_{Q_{j}}\right)\right\} \right|\nonumber \\[0.5em]
 & \qquad\qquad=\lim_{j\to\infty}\frac{1}{|Q_{j}|\overline{L}(\Gamma_{\Omega})}\left|tr\!\left\{ \chi_{Q_{j}}f_{z}(H_{\omega})-f_{z}(H_{\omega}|_{Q_{j}})\right\} \right|\nonumber \\[0.5em]
 & \qquad\qquad=\lim_{j\to\infty}\frac{1}{|Q_{j}|\overline{L}(\Gamma_{\Omega})}\left|tr\!\left\{ \chi_{Q_{j}}\!\left(f_{z}(H_{\omega})-f_{z}\!\left(H_{\omega}|_{Q_{j}}\oplus H_{\omega}|_{\mathbb{Z}\setminus Q_{j}}\right)\right)\right\} \right|\nonumber \\[0.5em]
 & \qquad\qquad\le\frac{2}{\overline{L}(\Gamma_{\Omega})\,|Im\left(z\right)|}\lim_{j\to\infty}\frac{|\partial Q_{j}|}{|Q_{j}|}=0.\label{eq:trace-bound}
\end{align}
where in the last equality we used that $Q_{j}$ is van Hove.

A Stone-Weierstrass argument then upgrades $(\ref{eq:trace-id})$
to indicator functions $\chi_{(-\infty,E]}$. Applying this to $Q_{j}=\left[0,j\right]$
and recalling that the normalized spectral functions were defined
as $N_{\omega}^{(j)}(E)=\frac{1}{\left|\left.\Gamma_{\omega}\right|_{Q_{j}}\right|}\mathop{tr}\left\{ \chi_{(-\infty,E]}\left(\left.H_{\omega}\right|_{Q_{j}}\right)\right\} $
we get
\begin{align}
\int_{\Omega}\lim_{j\rightarrow\infty}N_{\omega}^{(j)}\left(E\right)\ \rmd\mu\left(\omega\right) & =\int_{\Omega}\lim_{j\rightarrow\infty}\frac{1}{\left|\left.\Gamma_{\omega}\right|_{Q_{j}}\right|}\mathop{tr}\left\{ \chi_{(-\infty,E]}\left(\left.H_{\omega}\right|_{Q_{j}}\right)\right\} \ \rmd\mu\left(\omega\right)\nonumber \\
 & =\int_{\omega\in\Omega}\lim_{j\rightarrow\infty}\frac{1}{\left|Q_{j}\right|\overline{L}\left(\Gamma_{\Omega}\right)}\mathop{tr}\left\{ \chi_{\left.\Gamma_{\omega}\right|_{Q_{j}}}\chi_{(-\infty,E]}\left(H_{\omega}\right)\right\} \ \rmd\mu\left(\omega\right)\nonumber \\
 & =\frac{1}{\left|Q\right|\overline{L}\left(\Gamma_{\Omega}\right)}\int_{\omega\in\Omega}\mathop{tr}\left\{ \chi_{\left.\Gamma_{\omega}\right|_{Q}}\chi_{(-\infty,E]}\left(H_{\omega}\right)\right\} \ \rmd\mu\left(\omega\right),\label{eq:PS-trace}
\end{align}
where the second equality follows due to ($\ref{eq:trace-id}$), and
the third equality follows from the $Q$-independence of the trace.
This completes the proof.
\end{proof}
\begin{cor}
\label{cor:freq-jump}Assume that the frequencies of all finite subwords
in $\Omega$ are positive. The IDS $N_{\Omega}$ has a jump discontinuity
at $E\in\R$ if and only if $E$ admits a compactly supported eigenfunction.
\end{cor}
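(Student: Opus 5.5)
The plan is to read off the jump of $N_{\Omega}$ at $E$ from the trace formula of Proposition~\ref{prop:PS-trace-formula}, which identifies it with a trace of the spectral projection $P_{\omega}:=\chi_{\{E\}}(H_{\omega})$. Taking $Q=\{0\}$ in (\ref{eq:trace}) and letting the energy decrease to $E$ from below (monotone convergence for the projections), we get
\begin{equation*}
N_{\Omega}(E)-N_{\Omega}(E^{-})=\frac{1}{\overline{L}(\Gamma_{\Omega})}\int_{\Omega}\mathop{tr}\left[\chi_{\left.\Gamma_{\omega}\right|_{\{0\}}}P_{\omega}\right]\rmd\mu(\omega).
\end{equation*}
So $N_{\Omega}$ jumps at $E$ if and only if $P_{\omega}\neq0$ on a set of positive measure. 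By covariance (\ref{eq:covariant}) and ergodicity, this is equivalent to $E$ being an eigenvalue of $H_{\omega}$ for $\mu$-a.e.\ $\omega$.

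\textbf{Compact support implies a jump.} Suppose $E$ has a compactly supported eigenfunction $g$ on some $\Gamma_{\omega_{0}}$. Its support lies in the part of the graph governed by a finite subword $W=\omega_{0}|_{[p,q]}$. Whenever $W$ occurs at position $j$ in $\omega$, the local graph structure is identical, so translating $g$ gives an $E$-eigenfunction of $H_{\omega}$. The set of $\omega$ with $\omega|_{[0,q-p]}=W$ is a cylinder set, which has positive measure $\nu_{W}>0$ by hypothesis. For every such $\omega$, $P_{\omega}\neq0$. Moreover $\mathop{tr}[\chi_{Q}P_{\omega}]>0$ for a suitable fixed finite $Q$ near the support, and the $Q$-independence of the normalized integral then gives a positive jump. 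Alternatively one can count directly: in $\left.H_{\omega}\right|_{[0,n]}$ there are at least $\#\{\text{occurrences of }W\}/(q-p+1)$ linearly independent translates (take disjoint occurrences), which is $\gtrsim n\nu_{W}/|W|$. This bounds $\liminf$ of the normalized kernel dimension from below, which by uniform convergence equals the jump.

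\textbf{A jump implies compact support.} This is the main obstacle. If the jump is positive, then $E$ is an eigenvalue of $H_{\omega}$ with $\mu$-a.e.\ an infinite-dimensional eigenspace of positive density. The first approach I would try is a dimension-counting argument as in \cite{KlaLenSto_cmp03}. The jump equals $\lim_{n}\dim\ker(\left.H_{\omega}\right|_{[0,n]}-E)/|\left.\Gamma_{\omega}\right|_{[0,n]}|>0$, using the uniform convergence of the counting functions. Now impose the condition of vanishing, together with vanishing derivative, at the two boundary horizontal edges. This is at most a bounded number of linear constraints, independent of $n$. Hence for large $n$ some nonzero kernel element of the truncated operator vanishes to first order at both ends, and so it extends by zero to a compactly supported eigenfunction of $H_{\omega}$.

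The delicate point is that the truncated boundary condition (Dirichlet or Kirchhoff at the cut vertices) must be matched with the condition of being an eigenfunction of the full operator after zero extension. To handle it, I will use that the kernel dimension grows linearly in $n$ while the matching conditions stay finite, uniformly bounded by the maximal degree at the cut points. Any residual bounded-rank discrepancy between $\left.H_{\omega}\right|_{[0,n]}$ and the restriction of $H_{\omega}$ is absorbed exactly as in the finite-rank estimates of Lemma~\ref{lem:Spectral shift}. The equivalence then follows for every $\omega$, since the positive-frequency hypothesis makes every local pattern recur.
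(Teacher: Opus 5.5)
Your proposal is correct and follows essentially the same route as the paper, which defers to the Klassert--Lenz--Stollmann dimension-counting argument: by uniform convergence the jump equals $\lim_{n}\dim\ker(\left.H_{\omega}\right|_{[0,n]}-E)/|\left.\Gamma_{\omega}\right|_{[0,n]}|$. This dimension grows linearly while only a bounded number of boundary constraints are needed (with Kirchhoff truncation, just $f(0)=f(nL)=0$), so some kernel element extends by zero to a compactly supported eigenfunction; conversely, positive subword frequencies let you replicate such an eigenfunction with positive density. Your trace-formula reformulation of the jump via $\chi_{\{E\}}(H_{\omega})$ is simply an equivalent alternative for the easy direction.
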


The proof follows the same arguments as in \cite[thm. 2]{KlaLenSto_cmp03}
and \cite[cor. 7]{GruLenVes_jfa07}: a jump discontinuity of the IDS
is equivalent to the dimension of the eigenspace of $E\in\spec{\Ha}$
growing ``sufficiently quickly'' in $n$, which then allows one
to construct compactly supported eigenfunctions for $H_{\omega}$.

\section{\label{sec:Proof-of-counting-lemma-1}Proof of Lemma \ref{lem:Counting-lemma}}

This appendix proves Lemma\ \ref{lem:Counting-lemma}, which compares
the spectral counting functions for the Kirchhoff Laplacian on the
graph $\Gamma_{\omega}\left(t\right)$ with those of some of its subgraphs.

The proof relies on continuously interpolating between the full graph
$\Gamma_{\omega}\left(t\right)$ and the disjoint union of the corresponding
subgraphs of $\Gamma_{\omega}\left(t\right)$. This is done using
a one-parameter family of operators $\left(H_{\tau}\right)_{\tau\in\left[0,\pi\right]}$.
This operator family transitions between the full graph and the split
graph while keeping $\fwe$ an eigenfunction for all the graphs in
the transition process, allowing to relate the spectral counting functions
of the graphs.

\begin{figure}
\includegraphics[scale=0.8]{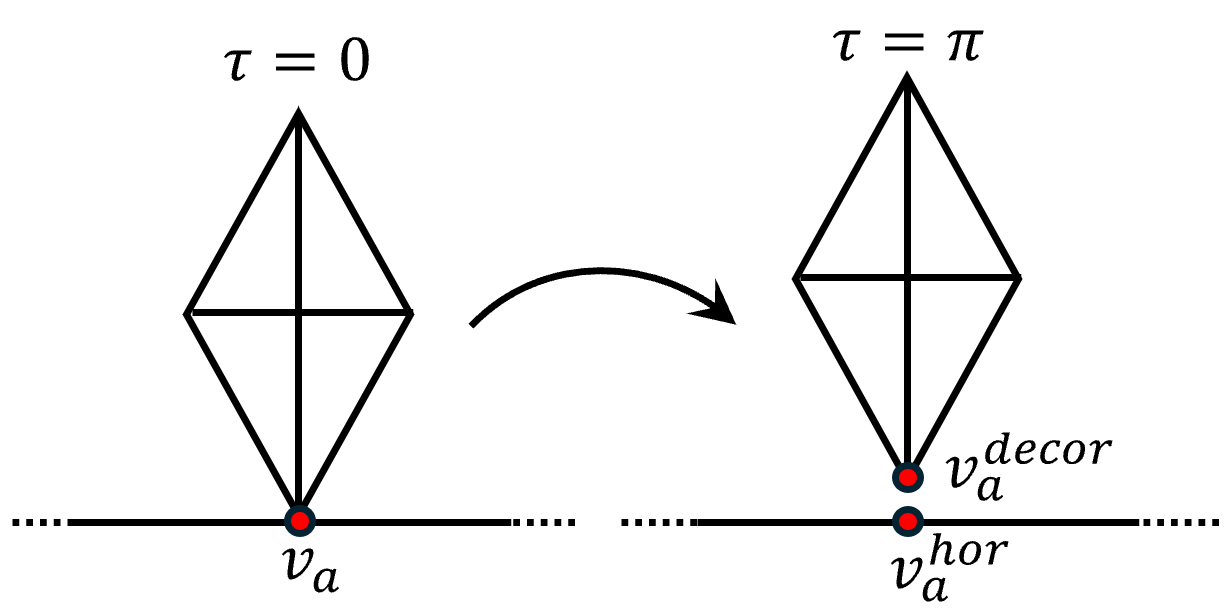}

\caption{The family $\left(H_{\tau}\right)_{\tau\in\left[0,\pi\right]}$ which
continuously disconnects the graph $\Gamma_{\omega}^{t}$ into two
subgraphs as $\tau\rightarrow\pi$. \label{fig: cutgraph-1}}
\end{figure}

To describe the construction, we start by fixing some $E\notin\spec{H_{\omega}}$
such that for each $a\in\A$, it holds that $E$ does not belong to
the spectrum of $\Gamma_{a}$ with a Dirichlet condition at the base
vertex $v_{a}$ of $\Gamma_{a}$ (as in the statement of Lemma~\ref{lem:Counting-lemma}).
Take on $\Gamma_{\omega}$ the unique solution $f_{\omega,E}$, as
is described in step one of the proof of Theorem~\ref{thm:GLT}.
Consider a compact truncation $\Gamma_{\omega}\left(t\right)$ of
the infinite graph $\Gamma_{\omega}$ (see (\ref{eq: truncated-graph})).
The vertices at which the cut is made are $s_{\omega}(t)\cup o(\Gamma_{\omega})$
(see (\ref{eq: spheres def}), (\ref{eq: truncated-graph}), and Figure
\ref{fig: HorizontalCovering}). For each vertex $u\in s_{\omega}(t)\cup o(\Gamma_{\omega})$
we denote
\begin{equation}
\alpha(u):=\frac{\sum_{e\in\E_{u}\cap\Gamma_{\omega}(t)}\left.f_{\omega,E}'\right|_{e}(u)}{f_{\omega,E}(u)},\label{eq:a-u}
\end{equation}
which is considered to be the Robin parameter of $f_{\omega,E}$ at
$u$ when restricted to $\Gamma_{\omega}(t)$. We consider an arbitrary
decoration $\Gamma_{a}$ which is attached to $\Gamma_{\omega}(t)$.
We can split the graph $\Gamma_{\omega}\left(t\right)$ into two subgraphs:
$\Gamma_{a}$ and $\Gamma_{\omega}\left(t\right)\backslash\Gamma_{a}$,
each containing a respective copy of the base vertex $v_{a}$ of the
decoration. We label these copies $v_{a}^{\mathrm{dec}},v_{a}^{\mathrm{hor}}$,
as in Figure \ref{fig: cutgraph-1}.

We describe a family of operators $\left(H_{\tau}\right)_{\tau\in\left[0,\pi\right]}$
on $\Gamma_{\omega}(t)$ which is now considered as a disjoint union
of $\Gamma_{a}$ and $\Gamma_{\omega}(t)\backslash\Gamma_{a}$. Each
operator $H_{\tau}$ acts as the Laplacian on each edge; at the vertices
$s_{\omega}(t)\cup o(\Gamma_{\omega})$ and $v_{a}^{\mathrm{dec}},v_{a}^{\mathrm{hor}}$
it satisfies continuity conditions (for this purpose $v_{a}^{\mathrm{dec}}$
and $v_{a}^{\mathrm{hor}}$ are considered as separate vertices) and
also the following vertex conditions: 
\begin{align}
 & \sum_{e\in\E_{u}}f'|_{e}\left(u\right)=\alpha(u)f(u),\quad\forall u\in s_{\omega}(t)\cup o(\Gamma_{\omega}),\label{eq: operator-family-VC-1-1}\\
 & \sum_{e\sim v_{a}^{\mathrm{dec}}}f_{e}'\left(v_{a}^{\mathrm{dec}}\right)=m_{a}(E)f\left(v_{a}^{\mathrm{dec}}\right)+\cot\left(\tau/2\right)\left(f\left(v_{a}^{\mathrm{hor}}\right)-f\left(v_{a}^{\mathrm{dec}}\right)\right),\label{eq: operator-family-VC-2-1}\\
 & \sum_{e\sim v_{a}^{\mathrm{hor}}}f_{e}'\left(v_{a}^{\mathrm{hor}}\right)=-m_{a}(E)f\left(v_{a}^{\mathrm{hor}}\right)-\cot\left(\tau/2\right)\left(f\left(v_{a}^{\mathrm{hor}}\right)-f\left(v_{a}^{\mathrm{dec}}\right)\right),\label{eq: operator-family-VC-3-1}
\end{align}
where the Robin parameter $m_{a}\left(E\right)$ is a fixed number
determined from $f_{\omega,E}$ restricted to $\Gamma_{a}$ as in
(\ref{eq:m-func}), and the Kirchhoff conditions is imposed at all
other vertices of $\Gamma_{\omega}(t)$. At $\tau=0$ we also add
the requirement $f(v_{a}^{\mathrm{dec}})=f(v_{a}^{\mathrm{hor}})$.
At $\tau=\pi$, the graph $\Gamma_{\omega}\left(t\right)$ is effectively
split at the vertex $v_{a}$, with the Robin conditions imposed at
both $v_{a}^{\mathrm{dec}}$ and $v_{a}^{\mathrm{hor}}$ (but with
opposite signs of the coupling coefficient).

One can verify that $\left.f_{\omega,E}\right|_{\Gamma_{\omega}(t)}$
satisfies the vertex conditions (\ref{eq: operator-family-VC-1-1}),(\ref{eq: operator-family-VC-2-1}),(\ref{eq: operator-family-VC-3-1})
for all $\tau\neq0$ and so it is an eigenfunction of $H_{\tau}$
for all $\tau\neq0$. At $\tau=0$, the additional condition $f(v_{a}^{\mathrm{dec}})=f(v_{a}^{\mathrm{hor}})$,
together with (\ref{eq: operator-family-VC-2-1}),(\ref{eq: operator-family-VC-3-1})
simplifies to Kirchhoff, and therefore $\left.f_{\omega,E}\right|_{\Gamma_{\omega}(t)}$
is an eigenfunction of $H_{0}$ as well. As a matter of fact, this
is the part of the rationale behind the particular choice of the operator
family $\left(H_{\tau}\right)_{\tau\in\left[0,\pi\right]}$. We may
also describe this operator family via its quadratic form (the connection
between vertex conditions and quadratic forms for quantum graphs is
standard, see e.g. \cite{BerKuc_graphs}):
\begin{align}
Q_{\tau}\left[f\right]= & \int_{\Gamma_{\omega}(t)}\left|f'\right|^{2}dx~+\sum_{u\in s_{\omega}(t)\cup o(\Gamma_{\omega})}\alpha(u)\left|f(u)\right|^{2}\nonumber \\
 & +m_{a}(E)|f(v_{a}^{\mathrm{hor}})|^{2}-m_{a}(E)|f(v_{a}^{\mathrm{dec}})|^{2}\nonumber \\
 & +\cot\left(\tau/2\right)|f(v_{a}^{\mathrm{hor}})-f(v_{a}^{\mathrm{dec}})|^{2},\label{eq:quadratic-1}
\end{align}
where the Robin parameter $m_{a}\left(E\right)$ is fixed as above
and the domain of $Q_{\tau}$ is taken as all functions in $H^{1}\left(\Gamma_{\omega}\left(t\right)\right)$
which are continuous on $\Gamma_{a}$ and continuous on $\Gamma_{\omega}\left(t\right)\backslash\Gamma_{a}$
(but without requiring continuity at $v_{a}$, i.e., that $f(v_{a}^{\mathrm{hor}})=f(v_{a}^{\mathrm{dec}})$).
For $\tau=0$, the domain further restricts to functions satisfying
$f(v_{a}^{\mathrm{hor}})=f(v_{a}^{\mathrm{dec}})$ as well. Thus at
$\tau=0$ the operator satisfies also Kirchhoff conditions at $v_{a}$
(without splitting it into $v_{a}^{\mathrm{dec}}$ and $v_{a}^{\mathrm{hor}}$).

Thus the family $\left(H_{\tau}\right)_{\tau\in\left[0,\pi\right]}$
continuously interpolates between an operator on the full graph $\Gamma_{\omega}\left(t\right)$
(at $\tau=0$) and an operator on the cut graph $\Gamma_{\omega}\left(t\right)\backslash\Gamma_{a}$
(at $\tau=\pi$). Now, we consider all the other decorations (in addition
to $\Gamma_{a}$ discussed above) which are connected to $\Gamma_{\omega}(t)$.
We redefine the operator family $\left(H_{\tau}\right)_{\tau\in\left[0,\pi\right]}$
such that the vertex conditions at all vertices where the decorations
are attached are changed simultaneously in the same manner as for
$v_{a}$. Namely, the vertex conditions (\ref{eq: operator-family-VC-2-1}),
(\ref{eq: operator-family-VC-3-1}) are imposed at all these decoration
attachment vertices. The effect of this redefined operator family
$\left(H_{\tau}\right)_{\tau\in\left[0,\pi\right]}$ is equivalent
to disconnecting $\Gamma_{\omega}\left(t\right)$ at all these vertices
simultaneously at $\tau=\pi$. At $\tau=0$, we get the operator $\left.H_{\omega}\right|_{\Gamma_{\omega}(t)}$,
namely, the vertex conditions at the vertices of $\Gamma_{\omega}\left(t\right)$
are all Kirchhoff, except for the boundary vertices $s_{\omega}(t)\cup o(\Gamma_{\omega})$,
where the Robin conditions (\ref{eq: operator-family-VC-1-1}) are
imposed. What is important to emphasize is that exactly as above $\left.f_{\omega,E}\right|_{\Gamma_{\omega}(t)}$
is an eigenfunction of $H_{\tau}$ for all $\tau\in\left[0,\pi\right]$.

By standard methods (cf. \cite[thm. 1.4.4]{BerKuc_graphs}), $H_{\tau}$
are all self-adjoint with compact resolvents. Noticing that the map
$\tau\mapsto Q_{\tau}\left[f\right]$ is piecewise analytic with non-positive
derivative for all fixed $f$, a standard Kato-type \cite{Kato_book}
argument (see e.g. \cite{Sofer2022,BanProSof_ijm26} for detailed
proofs in similar systems) can be used to prove the following:
\begin{lem}
\label{lem:analytic-1}The eigenvalue branches $\left(\lambda_{n}\left(\tau\right)\right)_{n\in\N}$
of $\left(H_{\tau}\right)_{\tau\in\left[0,\pi\right]}$ are piecewise
real-analytic, and monotone non-increasing in any interval where they
are differentiable.
\end{lem}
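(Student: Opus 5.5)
The plan is to treat $\left(H_{\tau}\right)_{\tau\in[0,\pi)}$ as a family of self-adjoint operators defined by the closed, semibounded quadratic forms $Q_{\tau}$ in (\ref{eq:quadratic-1}). The key structural fact is that on the open interval $\tau\in(0,\pi)$ all forms share the same domain: $H^{1}$ functions continuous on each $\Gamma_{a}$ and on $\Gamma_{\omega}(t)\backslash\bigcup\Gamma_{a}$, without continuity across the split vertices. Moreover, $Q_{\tau}$ depends on $\tau$ only through the scalar coefficient $\cot(\tau/2)$ multiplying the bounded-below form $\sum_{a}|f(v_{a}^{\mathrm{hor}})-f(v_{a}^{\mathrm{dec}})|^{2}$. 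This term is relatively form-bounded with bound zero with respect to $\int|f'|^{2}$, by the standard trace estimate $|f(v)|^{2}\le\varepsilon\|f'\|^{2}+C_{\varepsilon}\|f\|^{2}$ on each edge. Hence $(Q_{\tau})_{\tau\in(0,\pi)}$ is a holomorphic family of type (B) in the sense of Kato. Since each $H_{\tau}$ has compact resolvent (the graph is compact), Kato--Rellich theory gives that the eigenvalues can be arranged into real-analytic branches on $(0,\pi)$, with analytic eigenprojections.

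Monotonicity will follow from the Hellmann--Feynman formula. On an interval where a branch $\lambda_{n}(\tau)$ with normalized analytic eigenvector $f_{\tau}$ is analytic, we have
\[
\lambda_{n}'(\tau)=\frac{d}{d\tau}Q_{\tau}[f_{\tau}]\Big|_{\text{form fixed}}=-\frac{1}{2\sin^{2}(\tau/2)}\sum_{a}\left|f_{\tau}(v_{a}^{\mathrm{hor}})-f_{\tau}(v_{a}^{\mathrm{dec}})\right|^{2}\le0 .
\]
The sum runs over all decoration attachment vertices, and the other terms in (\ref{eq:quadratic-1}) do not depend on $\tau$. At crossing points the branches are relabelled, and the ordered eigenvalues $\lambda_{n}(\tau)$ (counted with multiplicity) are then only piecewise analytic, hence the word ``piecewise''. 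They remain continuous and non-increasing, being minima/maxima over finitely many monotone analytic branches locally. Equivalently, one can avoid Hellmann--Feynman altogether. For $\tau_{1}<\tau_{2}$ in $(0,\pi)$ we have $Q_{\tau_{1}}\ge Q_{\tau_{2}}$ on the common domain, so the min-max principle gives $\lambda_{n}(\tau_{1})\ge\lambda_{n}(\tau_{2})$ directly.

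The endpoints require separate treatment, and I expect this to be the main obstacle. At $\tau=\pi$ we have $\cot(\pi/2)=0$, and analyticity extends across $\pi$ trivially, since $\cot(\tau/2)$ is analytic there. At $\tau=0$, however, the form domain shrinks to the continuous functions and the coefficient blows up. I would handle this by showing $\lambda_{n}(\tau)\to\lambda_{n}(0)$ as $\tau\to0^{+}$, so that each branch is continuous on $[0,\pi]$. The cleanest route is monotone convergence of forms from above (Kato, Simon's theorem on increasing sequences of forms): as $\tau\downarrow0$ the forms $Q_{\tau}$ increase to a form whose domain is the set of $f$ with $\sup_{\tau}Q_{\tau}[f]<\infty$. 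That domain is exactly the continuous functions, on which the limit form equals $Q_{0}$. This yields strong resolvent convergence, and with compact resolvents, convergence of each ordered eigenvalue. Monotonicity on $[0,\pi]$ then follows as well, because $Q_{0}\ge Q_{\tau}$ in the min-max sense: the domain of $Q_{0}$ is a subspace of the domain of $Q_{\tau}$, and on it the two forms coincide. Combining the interior analyticity, the Hellmann--Feynman sign, and endpoint continuity yields the lemma.
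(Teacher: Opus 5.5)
Your proposal is correct and follows the paper's own argument, which the paper only sketches: analytic perturbation theory for the forms $Q_{\tau}$ on their common domain, whose sole $\tau$-dependence is the nonnegative coupling term with decreasing coefficient $\cot(\tau/2)$, followed by Hellmann--Feynman or min-max for the sign of the derivative. The one step to tighten is $\tau=0$: strong resolvent convergence plus compact resolvents does not by itself give convergence of ordered eigenvalues, but your monotone setting does, because $0\le(H_{\tau}+c)^{-1}-(H_{0}+c)^{-1}\le(H_{\tau_{1}}+c)^{-1}$ for $0<\tau<\tau_{1}$, and domination by this fixed compact operator upgrades strong to norm convergence. Even then you get only continuity at $\tau=0$, not analyticity, so crossings accumulating at $\tau=0$ are not excluded. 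This is harmless for the rectangle argument in Lemma~\ref{lem:Counting-lemma}, which needs only continuity and monotonicity. If you want genuine analyticity through $\tau=0$, multiply (\ref{eq: operator-family-VC-2-1}) by $\sin(\tau/2)$ and replace (\ref{eq: operator-family-VC-3-1}) by the sum of the two conditions; this is a real-analytic family of self-adjoint vertex conditions that reduces to continuity plus Kirchhoff at $\tau=0$.
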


We have collected all that is needed.
\begin{proof}[Proof of Lemma \ref{lem:Counting-lemma}]
The lemma considers the operators $\left.H_{\omega}\right|_{\Gamma_{\omega}(t)}$,
$\left.H_{\omega}\right|_{[0,tL]}$ and $\left.H_{\omega}\right|_{\Gamma_{a}}$.
Their spectral counting functions are denoted by $\sc$ , $\sch$
and $\sca$, respectively. We note that the operator $\left.H_{\omega}\right|_{\Gamma_{\omega}(t)}$
is exactly $H_{0}$ of the operator family $\left(H_{\tau}\right)_{\tau\in\left[0,\pi\right]}$
defined above. Denoting the spectral counting functions of this family
by $\sc^{(\tau)}(E)$ (i.e., $\sc^{(0)}=\sc$), the statement of Lemma
\ref{lem:Counting-lemma} translates to
\begin{equation}
\sc^{(0)}\left(E\right)=\sch\left(E\right)+\sum_{a\in\mathcal{A}}\ct t\left(\sca\left(E\right)-1\right).\label{eq:nwt}
\end{equation}
Now, consider the operator $H_{\pi}$. It is an operator on the cut
version of $\Gamma_{\omega}(t)$, namely the disjoint union of the
horizontal graph with the individual decorations. As such, its spectral
counting function equals the sum of the individual counting functions,
\begin{equation}
\sc^{(\pi)}\left(E\right)=\sch\left(E\right)+\sum_{a\in\mathcal{A}}\ct t\sca\left(E\right).\label{eq: lem-countin - spectral pi}
\end{equation}
 Given (\ref{eq: lem-countin - spectral pi}), we can prove (\ref{eq:nwt})
by showing the following properties on the eigenvalue curves:
\begin{enumerate}
\item \label{enu: lem-counting - prop-1} The operator family $\left(H_{\tau}\right)_{\tau\in[0,\pi]}$
is uniformly bounded from below.
\item \label{enu: lem-counting - prop-2} For $\varepsilon>0$ small enough,
there are exactly $\left\lfloor t\right\rfloor $ crossings of the
eigenvalue curves with the horizontal line $\lambda=E+\varepsilon$
in the interval $\tau\in[0,\pi]$.
\end{enumerate}
We first explain why the two properties above together with (\ref{eq: lem-countin - spectral pi})
imply (\ref{eq:nwt}), and then prove these properties. Consider the
rectangle bounded by $\tau=0$, $\tau=\pi$, $\lambda=E+\varepsilon$
and $\lambda=-C$, where $-C$ is a uniform lower bound of the family
$\left(H_{\tau}\right)_{\tau\in[0,\pi]}$ (see Figure~\ref{fig: SCurves-1}).
It is clear that $\sc^{(0)}\left(E\right)$ and $\sc^{(\pi)}\left(E\right)$
equal the number of intersections of the eigenvalue curves with the
left and right sides of the rectangle respectively. Due to property
(\ref{enu: lem-counting - prop-1}) there are no intersections with
the lower side of the rectangle. By Lemma~\ref{lem:analytic-1} the
eigenvalue curves are monotone non-increasing, and hence the number
of intersections with the top side equals $\sc^{(\pi)}\left(E\right)-\sc^{(0)}\left(E\right)$.
By property (\ref{enu: lem-counting - prop-2}) the number of these
intersections is $\left\lfloor t\right\rfloor $, so that 
\begin{equation}
\sc^{(\pi)}\left(E\right)-\sc^{(0)}\left(E\right)=\left\lfloor t\right\rfloor =\sum_{a\in\mathcal{A}}\ct t,\label{eq:spectral-shift}
\end{equation}
which combined with (\ref{eq: lem-countin - spectral pi}) proves
(\ref{eq:nwt}).

\begin{figure}
\includegraphics[scale=0.45]{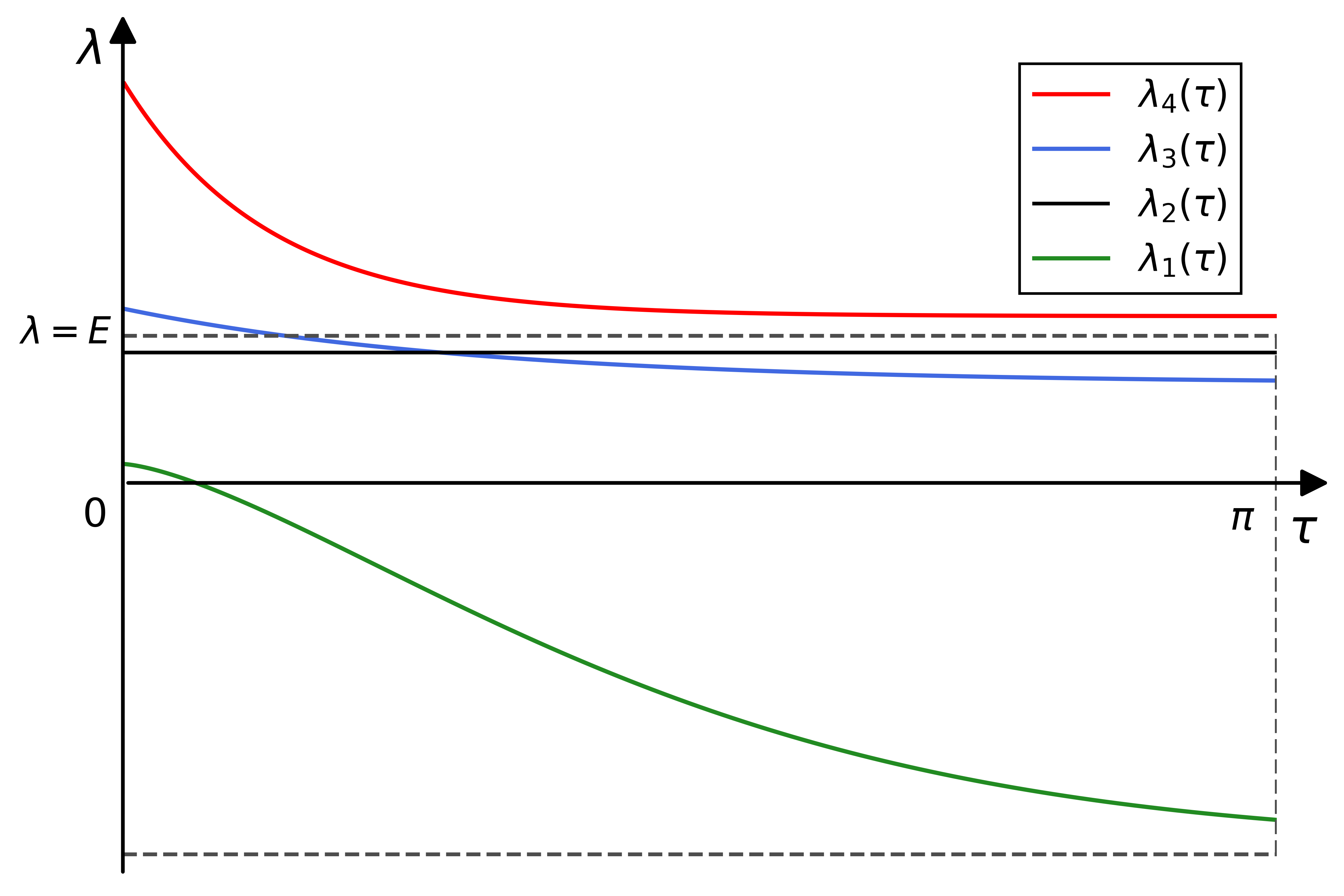}

\caption{Demonstration of the rectangle argument: the number of intersections
through the top side is equal to the spectral shift. \label{fig: SCurves-1}}
\end{figure}

\ 

Next, we prove the two properties mentioned above. Property (\ref{enu: lem-counting - prop-1})
follows from the quadratic form (\ref{eq:quadratic-1}): $\left(H_{\tau}\right)_{\tau\in\left[0,\pi\right]}$
is uniformly bounded from below since $\cot(\tau/2)$ is bounded from
below there.

For property (\ref{enu: lem-counting - prop-2}), first recall that
$E$ is as in the proof of Theorem \ref{thm:GLT} (and was used to
define the family $\left(H_{\tau}\right)_{\tau\in\left[0,\pi\right]}$).
We have already observed that the function $\fwe$ is an eigenfunction
of $H_{\tau}$ for all $\tau\in\left[0,\pi\right]$. Hence, there
exists a `flat' eigenvalue branch at the constant value of $\lambda=E$.
Denote by $M$ the multiplicity of $E$ as an eigenvalue of $H_{0}$
(we know that $M\geq1$, since $f_{\omega,E}$ is an eigenfunction).
 Next, we will show the following two statements: (a) The multiplicity
of $E$ as an eigenvalue of $H_{\pi}$ is $M+\left\lfloor t\right\rfloor $
and (b) The multiplicity of $E$ as an eigenvalue of $H_{\tau}$ for
any $\tau\in\left[0,\pi\right)$ is $M$. From this we would get that
there are exactly $\left\lfloor t\right\rfloor $ eigenvalue branch
which cross $\lambda=E$ and these crossings occur at $\tau=\pi$.
This immediately yields property (\ref{enu: lem-counting - prop-2})
which finishes the proof.

(a) Denote by $\{f^{(j)}\}_{j=1}^{M}$ a basis to the $E$-eigenspace
of $H_{0}$. We use the functions $\{f^{(j)}\}_{j=1}^{M}$ to construct
$M+\left\lfloor t\right\rfloor $ functions on $\Gamma_{\omega}(t)$.
Note that given the value $E$, at each of the $\left\lfloor t\right\rfloor $
decorations there is a unique (up to scalar) function which is a solution
of the ODE (\ref{eq:E-ODE}) with Kirchhoff conditions imposed at
all vertices, except at the base vertex where only a continuity condition
is imposed. The uniqueness is guaranteed since we demanded that for
all decorations $\gra$, $E$ is not in the spectrum of the decoration
with Dirichlet condition (if uniqueness is violated, one may construct
such a function which vanishes at the base vertex). We denote these
unique functions on the decorations by $\{f_{a}\}_{a\in\A}$. We thus
get that for each $f^{(j)}$ ($1\leq j\leq M$), its restriction to
each of the decorations either equals the corresponding $f_{a}$,
or identically vanishes at the decoration (in the case where $f^{(j)}(v_{a})=0$).

Given the above observations, we construct $M+\left\lfloor t\right\rfloor $
functions on $\Gamma_{\omega}(t)$ as follows. For each of the $\left\lfloor t\right\rfloor $
decorations construct a function which is supported only at this decoration
and vanishes everywhere else (i.e., it vanishes at all the other decorations
and at the horizontal line). This gives $\left\lfloor t\right\rfloor $
eigenfunctions of $H_{\pi}$. We additionally take $\{\left.f^{(j)}\right|_{\left[0,tL\right]}\}{}_{j=1}^{M}$,
which are also eigenfunctions of $H_{\pi}$. We thus get $M+\left\lfloor t\right\rfloor $
$E$-eigenfunctions of $H_{\pi}$. Clearly these functions are linearly
independent and we now show that there are no other $E$-eigenfunctions
of $H_{\pi}$. Assume by contradiction that there is another $E$-eigenfunction
of $H_{\pi}$, denoted by $g$ which is not a linear combination of
the $M+\left\lfloor t\right\rfloor $ functions mentioned above. To
get a contradiction, we construct a function $h$ on $\Gamma_{\omega}(t)$,
such that $\left.h\right|_{\left[0,tL\right]}=\left.g\right|_{\left[0,tL\right]}$
and at each decoration $\gra$ which is included in $\Gamma_{\omega}(t)$
we set $\left.h\right|_{\gra}$ to equal $f_{a}$ up to a scalar multiple
which is chosen to guarantee that $h$ is continuous at the base vertex
of $\gra$. This way, we obtain that $h$ is an $E$-eigenfunction
of $H_{0}$. At every decoration $\Gamma_{a}$, either $\left.g\right|_{\Gamma_{a}}$
equals $f_{a}$ up to a scalar multiple, or $\left.g\right|_{\Gamma_{a}}\equiv0$
(by the uniqueness mentioned above) . Therefore, $g$ is a linear
combination of $h$ and the $\left\lfloor t\right\rfloor $ functions
which are supported solely at the decorations of $\Gamma_{\omega}(t)$,
but this is a contradiction.

(b) Let $\tau\neq\pi$. Note that $\{f^{(j)}\}_{j=1}^{M}$ are $E$-eigenfunctions
of $H_{\tau}$. We should only show that there are no additional eigenfunctions.
Assume by contradiction that there is an eigenfunction $g$ of $H_{\tau}$
which is linearly independent of $\{f^{(j)}\}_{j=1}^{M}$. In particular,
on every decoration $\gra$ which is included in $\Gamma_{\omega}(t)$
we have that $g$ is a solution of the same ODE as $\left.f_{\omega,E}\right|_{\gra}$
(and as all of $\{\left.f^{(j)}\right|_{\gra}\}_{j=1}^{M}$). This
implies that at the base vertex $v_{a}$ of the decoration we get
$\sum_{e\sim v_{a}^{\mathrm{dec}}}\left.g'\right|_{e}\left(v_{a}^{\mathrm{dec}}\right)=m_{a}\left(E\right)g\left(v_{a}^{\mathrm{dec}}\right)$.
Comparing this to the vertex condition (\ref{eq: operator-family-VC-2-1})
and using $\cot(\tau/2)\neq0$ we get that $g(v_{a}^{\mathrm{dec}})=g(v_{a}^{\mathrm{hor}})$,
i.e., that $g$ is continuous at the base vertex of the decoration.
Since this is valid for all the decorations contained in $\Gamma_{\omega}(t)$
we get that $g$ is an $E$-eigenfunction of $H_{0}$ which is linearly
independent of $\{f^{(j)}\}_{j=1}^{M}$. A contradiction.
\end{proof}
\begin{rem*}
The proof of Lemma~\ref{lem:Counting-lemma} is based on the notion
of spectral flow. The spectral flow of an operator family such as
$\left(H_{\tau}\right)_{\tau\in[0,\pi]}$ is informally defined as
the number of oriented intersections of the eigenvalue curves of $H_{\tau}$
with some horizontal line $E=\mathrm{const}$. The spectral flow is
a topological invariant and an interesting framework in its own right.
To keep the proof self-contained, we instead used a direct argument.
We refer to \cite{BoossBavnbek2005,B.BoossBavnbek2013,B.BoossBavnbek2018}
and references therein for a thorough background about the spectral
flow, and also to \cite{BanProSof_ijm26,Prokhorova_prep,LatSuk_ams20}
for applications of the spectral flow specifically in the context
of quantum graphs.
\end{rem*}
\bibliographystyle{amsalpha}
\bibliography{GlobalBib_260311}

@Article{BanBerSmi_ahp12,
  author  = {Band, R. and Berkolaiko, G. and Smilansky, U.},
  title   = {Dynamics of Nodal Points and the Nodal Count on a Family of Quantum Graphs},
  number  = {1},
  pages   = {145--184},
  volume  = {13},
  journal = {Annales Henri Poincare},
  year    = {2012},
}

@Article{Bel_laa85,
  author  = {von Below, J.},
  title   = {A characteristic equation associated to an eigenvalue problem on {$c^2$}-networks},
  doi     = {10.1016/0024-3795(85)90258-7},
  pages   = {309--325},
  volume  = {71},
  journal = {Linear Algebra Appl.},
  mrclass = {94C15 (05C50)},
  year    = {1985},
}

@Article{Ber_cmp08,
  author   = {Berkolaiko, G.},
  journal  = {Comm. M. Phys.},
  title    = {A lower bound for nodal count on discrete and metric graphs},
  year     = {2008},
  number   = {3},
  pages    = {803--819},
  volume   = {278},
  doi      = {10.1007/s00220-007-0391-3},
  fjournal = {Communications in Mathematical Physics},
}

@Book{BerKuc_graphs,
  author    = {Berkolaiko, G. and Kuchment, P.},
  title     = {Introduction to Quantum Graphs},
  publisher = {AMS},
  series    = {Math. Surv. and Mon.},
  volume    = {186},
  year      = {2013},
}

@Article{Cat_mm97,
  author   = {Cattaneo, C.},
  journal  = {Monatsh. Math.},
  title    = {The spectrum of the continuous {L}aplacian on a graph},
  year     = {1997},
  number   = {3},
  pages    = {215--235},
  volume   = {124},
  doi      = {10.1007/BF01298245},
  fjournal = {Monatshefte f{\"u}r Mathematik},
  mrclass  = {35P05 (05C50 35J25 47A10 47F05)},
  mrnumber = {MR1476363 (98j:35127)},
  url      = {http://dx.doi.org/10.1007/BF01298245},
}

@Article{GnuSmiWeb_wrm04,
  author  = {Gnutzmann, S. and Smilansky, U. and Weber, J.},
  title   = {Nodal counting on quantum graphs},
  doi     = {10.1088/0959-7174/14/1/011},
  number  = {1},
  pages   = {S61--S73},
  volume  = {14},
  journal = {Waves Random Media},
  mrclass = {81Q05},
  year    = {2004},
}

@InCollection{GruLenVes_incol08,
  author    = {Gruber, M. J. and Lenz, D. H. and Veseli{\'c}, I.},
  booktitle = {Analysis on graphs and its applications},
  publisher = {Amer. Math. Soc.},
  title     = {Uniform existence of the integrated density of states for combinatorial and metric graphs over {${\mathbb{Z}}^d$}},
  year      = {2008},
  address   = {Providence, RI},
  pages     = {87--108},
  series    = {Proc. Sympos. Pure Math.},
  volume    = {77},
  mrclass   = {47B80 (47N50 60H25 81Q10 82B44)},
  mrnumber  = {MR2459865},
}

@Article{GruLenVes_jfa07,
  author   = {Gruber, M. J. and Lenz, D. H. and Veseli{\'c}, I.},
  journal  = {J. Funct. Anal.},
  title    = {Uniform existence of the integrated density of states for random {S}chr{\"o}dinger operators on metric graphs over {${\mathbb Z}^d$}},
  year     = {2007},
  number   = {2},
  pages    = {515--533},
  volume   = {253},
  doi      = {10.1016/j.jfa.2007.09.003},
  fjournal = {Journal of Functional Analysis},
  mrclass  = {82B44 (34L40 35J10 35P20 35R60 47B80 60H25 81Q10)},
  mrnumber = {MR2370087 (2009h:82048)},
  url      = {http://dx.doi.org/10.1016/j.jfa.2007.09.003},
}

@Article{LlePos_jmaa08,
  author   = {Lled{\'o}, F. and Post, O.},
  journal  = {J. Math. Anal. Appl.},
  title    = {Eigenvalue bracketing for discrete and metric graphs},
  year     = {2008},
  number   = {2},
  pages    = {806--833},
  volume   = {348},
  doi      = {10.1016/j.jmaa.2008.07.029},
  fjournal = {Journal of Mathematical Analysis and Applications},
  mrclass  = {47B39 (05C50 05C70 35P15 47A10)},
  mrnumber = {MR2446037 (2010a:47076)},
  url      = {http://dx.doi.org/10.1016/j.jmaa.2008.07.029},
}

@Article{Pan_lmp06,
  author   = {Pankrashkin, K.},
  journal  = {Lett. Math. Phys.},
  title    = {Spectra of {S}chr{\"o}dinger operators on equilateral quantum graphs},
  year     = {2006},
  number   = {2},
  pages    = {139--154},
  volume   = {77},
  doi      = {10.1007/s11005-006-0088-0},
  fjournal = {Letters in Mathematical Physics. A Journal for the Rapid Dissemination of Short Contributions in the Field of Mathematical Physics},
  mrclass  = {81Q10 (05C90 47A10 47F05 47N50)},
  mrnumber = {MR2251302 (2007f:81089)},
  url      = {http://dx.doi.org/10.1007/s11005-006-0088-0},
}

@Article{Sch_wrcm06,
  author   = {Schapotschnikow, P.},
  title    = {Eigenvalue and nodal properties on quantum graph trees},
  doi      = {10.1080/17455030600702535},
  number   = {3},
  pages    = {167--178},
  volume   = {16},
  fjournal = {Waves in Random and Complex Media. Propagation, Scattering and Imaging},
  journal  = {Waves Random Complex Media},
  mrclass  = {34B45 (34C10 81Q05)},
  year     = {2006},
}

@Article{AloBanBer_cmp18,
  author   = {Alon, L. and Band, R. and Berkolaiko, G.},
  journal  = {Comm. Math. Phys.},
  title    = {Nodal statistics on quantum graphs},
  year     = {2018},
  issn     = {0010-3616,1432-0916},
  number   = {3},
  pages    = {909--948},
  volume   = {362},
  doi      = {10.1007/s00220-018-3111-2},
  fjournal = {Communications in Mathematical Physics},
  mrclass  = {05C90},
  mrnumber = {3845291},
  url      = {https://doi.org/10.1007/s00220-018-3111-2},
}

@Book{Kato_book,
  author    = {Kato, T.},
  title     = {Perturbation theory for linear operators},
  edition   = {Second},
  note      = {Grundlehren der Mathematischen Wissenschaften, Band 132},
  pages     = {xxi+619},
  publisher = {Springer-Verlag},
  address   = {Berlin},
  mrclass   = {47-XX},
  mrnumber  = {0407617 (53 \#11389)},
  year      = {1976},
}

@PhdThesis{Alon_PhDThesis,
  author = {L. Alon},
  school = {Mathematics Department, Technion - Israel Institute of Technology},
  title  = {Quantum graphs - Generic eigenfunctions and their nodal count and Neumann count statistics},
  year   = {2020},
}

@InCollection{Berkolaiko_qg-intro17,
  author     = {Berkolaiko, G.},
  booktitle  = {Geometric and computational spectral theory},
  publisher  = {Amer. Math. Soc., Providence, RI},
  title      = {An elementary introduction to quantum graphs},
  year       = {2017},
  pages      = {41--72},
  series     = {Contemp. Math.},
  volume     = {700},
  doi        = {10.1090/conm/700/14182},
  mrclass    = {35R02 (35B05 35J10 81Q35)},
  mrnumber   = {3748521},
  mrreviewer = {Stefan Le Coz},
  url        = {https://doi.org/10.1090/conm/700/14182},
}

@InCollection{LatSuk_ams20,
  author     = {Latushkin, Y. and Sukhtaiev, S.},
  booktitle  = {Analytic trends in mathematical physics},
  publisher  = {Amer. Math. Soc., [Providence], RI},
  title      = {An index theorem for {S}chr\"{o}dinger operators on metric graphs},
  year       = {2020},
  pages      = {105--119},
  series     = {Contemp. Math.},
  volume     = {741},
  doi        = {10.1090/conm/741/14922},
  mrclass    = {34B45 (34B24 34L05 53D12 58J30 81Q35)},
  mrnumber   = {4047783},
  mrreviewer = {Jia Zhao},
  url        = {https://doi.org/10.1090/conm/741/14922},
}

@Article{GnuSmi_ap06,
  author   = {Gnutzmann, S. and Smilansky, U.},
  title    = {Quantum graphs: Applications to quantum chaos and universal spectral statistics},
  doi      = {10.1080/00018730600908042},
  number   = {5--6},
  pages    = {527--625},
  volume   = {55},
  fjournal = {Advances in Physics},
  journal  = {Adv. Phys.},
  year     = {2006},
}

@Article{B.BoossBavnbek2018,
  author  = {B. Booss-Bavnbek and C. Zhu},
  journal = {Memoirs of the American Mathematical Society},
  title   = {The {M}aslov Index in Symplectic {B}anach Spaces},
  year    = {2018},
}

@Article{B.BoossBavnbek2013,
  author  = {B. Booss-Bavnbek and C. Zhu},
  journal = {Ann Glob Anal Geom},
  title   = {The {M}aslov index in weak symplectic functional analysis},
  year    = {2013},
}

@Article{BanProSof_ijm26,
  author  = {R. Band and M. Prokhorova and G. Sofer},
  journal = {arXiv:2505.02039},
  title   = {Spectral flow and {R}obin domains on metric graphs},
}

@MastersThesis{Sofer2022,
  author = {G. Sofer},
  school = {Technion - Israel Institute of Technology},
  title  = {Spectral curves of quantum graphs with $\delta$s type vertex conditions},
  year   = {2022},
}

@Article{Lenz2009,
  author  = {D. Lenz and N. Peyerimhoff and O. Post and I. Veselic},
  journal = {Mathematical Physics, Analysis and Geometry 12},
  title   = {Continuity of the integrated density of states on random length metric graphs},
  year    = {2009},
}

@Article{Peyerimhoff2021,
  author  = {N. Peyerimhoff and M. {T\"{a}ufer}},
  journal = {Journal of Spectral Theory 11},
  title   = {Eigenfunctions and the integrated density of states on Archimedean tilings},
  year    = {2021},
}

@Book{Teschl2014,
  author    = {G. Teschl},
  publisher = {American Mathematical Society, Providence, Rhode Island},
  title     = {Mathematical methods in quantum mechanics with applications to {Schr\"{o}dinger} operators, second edition},
  year      = {2014},
}

@Article{Damanik2023,
  author  = {D. Damanik and J. Fillman and Z. Zhang},
  journal = {Journal of Spectral Theory 13},
  title   = {Johnson {S}chwartzman gap labelling for ergodic {J}acobi matrices},
  year    = {2023},
}

@Article{AloBanBer_exp22,
  author    = {Alon, L. and Band, R. and Berkolaiko, G.},
  journal   = {Experimental Mathematics},
  title     = {Universality of Nodal Count Distribution in Large Metric Graphs},
  year      = {2022},
  number    = {0},
  pages     = {1-35},
  volume    = {0},
  doi       = {10.1080/10586458.2022.2092565},
  eprint    = {https://doi.org/10.1080/10586458.2022.2092565},
  publisher = {Taylor & Francis},
  url       = {https://doi.org/10.1080/10586458.2022.2092565},
}

@Article{DamEmbFilMei_exp23,
  author    = {D. Damanik and M. Embree and J. Fillman and M. Mei},
  journal   = {Experimental Mathematics},
  title     = {Discontinuities of the Integrated Density of States for {L}aplacians Associated with {P}enrose and {A}mmann--{B}eenker Tilings},
  year      = {2023},
  number    = {0},
  pages     = {1-23},
  volume    = {0},
  doi       = {10.1080/10586458.2023.2206589},
  eprint    = {https://doi.org/10.1080/10586458.2023.2206589},
  publisher = {Taylor & Francis},
  url       = {https://doi.org/10.1080/10586458.2023.2206589},
}

@Article{BaaGahMaz_ijc24,
  author       = {M. Baake and F. G\"{a}hler and J. Maz\'{a}\v{c}},
  journal      = {Israel Journal of Chemistry},
  title        = {On the {F}ibonacci tiling and its modern ramifications},
  year         = {2024},
  eprint       = {2311.05387},
  primaryclass = {math.MG},
}

@Book{Baake2013,
  author     = {Baake, M. and Grimm, U.},
  publisher  = {Cambridge University Press, Cambridge},
  title      = {Aperiodic order. {V}ol. 1},
  year       = {2013},
  isbn       = {978-0-521-86991-1},
  note       = {A mathematical invitation, With a foreword by Roger Penrose},
  series     = {Encyclopedia of Mathematics and its Applications},
  volume     = {149},
  doi        = {10.1017/CBO9781139025256},
  mrclass    = {52C23 (11H06 20C35 20H15 82D25)},
  mrnumber   = {3136260},
  mrreviewer = {Jean-Pierre Gazeau},
  pages      = {xvi+531},
  url        = {http://dx.doi.org/10.1017/CBO9781139025256},
}

@Book{Queffelec_book10,
  author     = {Queff\'{e}lec, M.},
  publisher  = {Springer-Verlag, Berlin},
  title      = {Substitution dynamical systems---spectral analysis},
  year       = {2010},
  edition    = {Second},
  isbn       = {978-3-642-11211-9},
  series     = {Lecture Notes in Mathematics},
  volume     = {1294},
  doi        = {10.1007/978-3-642-11212-6},
  mrclass    = {37B10 (11K06 28D05 37A30 37A45 37B15)},
  mrnumber   = {2590264},
  mrreviewer = {T\'{u}lio\ O.\ Carvalho},
  pages      = {xvi+351},
  url        = {https://doi.org/10.1007/978-3-642-11212-6},
}

@Article{Bellissard1992,
  author     = {Bellissard, J. and Bovier, A. and Ghez, J.-M.},
  journal    = {Rev. Math. Phys.},
  title      = {Gap labelling theorems for one-dimensional discrete {S}chr\"odinger operators},
  year       = {1992},
  issn       = {0129-055X},
  number     = {1},
  pages      = {1--37},
  volume     = {4},
  fjournal   = {Reviews in Mathematical Physics. A Journal for Both Review and Original Research Papers in the Field of Mathematical Physics},
  mrclass    = {47N50 (19K33 39A12 46L99 47B39 81Q10)},
  mrnumber   = {1160136},
  mrreviewer = {G. V. Rozenblum},
  url        = {https://doi.org/10.1142/S0129055X92000029},
}

@Book{damFil_book22,
  author    = {Damanik, D. and Fillman, J.},
  publisher = {American Mathematical Society},
  title     = {One-dimensional Ergodic {S}chr{\"o}dinger Operators: I. General Theory},
  year      = {2022},
  volume    = {221},
}

@Book{Kurasov_book24,
  author    = {Kurasov, P.},
  publisher = {Birkh\"auser/Springer, Berlin},
  title     = {Spectral geometry of graphs},
  year      = {[2024] \copyright 2024},
  isbn      = {978-3-662-67870-1; 978-3-662-67872-5},
  series    = {Operator Theory: Advances and Applications},
  volume    = {293},
  doi       = {10.1007/978-3-662-67872-5},
  mrclass   = {81-02 (05-02 34B45 47A10 81Q35 81U40)},
  mrnumber  = {4697523},
  pages     = {xvi+639},
  url       = {https://doi.org/10.1007/978-3-662-67872-5},
}

@Article{SchFraFliGru_prl24,
  author    = {Schirmann, J. and Franca, S. and Flicker, F. and Grushin, A. G.},
  journal   = {Phys. Rev. Lett.},
  title     = {Physical Properties of an Aperiodic Monotile with Graphene-like Features, Chirality, and Zero Modes},
  year      = {2024},
  month     = {Feb},
  pages     = {086402},
  volume    = {132},
  doi       = {10.1103/PhysRevLett.132.086402},
  issue     = {8},
  numpages  = {8},
  publisher = {American Physical Society},
  url       = {https://link.aps.org/doi/10.1103/PhysRevLett.132.086402},
}

@Misc{BanSof_prep24a,
  author = {R. Band and G. Sofer},
  title  = {Zero measure {C}antor spectrum for discrete and metric aperiodic tiling graphs},
  year   = {In preparation},
}

@Article{Simon1982,
  author     = {Simon, B.},
  journal    = {Adv. in Appl. Math.},
  title      = {Almost periodic {S}chr\"odinger operators: a review},
  year       = {1982},
  issn       = {0196-8858},
  number     = {4},
  pages      = {463--490},
  volume     = {3},
  doi        = {10.1016/S0196-8858(82)80018-3},
  fjournal   = {Advances in Applied Mathematics},
  mrclass    = {34B25 (47E05 58F11 81-02 81C05)},
  mrnumber   = {682631},
  mrreviewer = {Evans M. Harrell},
  url        = {http://dx.doi.org/10.1016/S0196-8858(82)80018-3},
}

@Misc{DamFill_book_vol_2,
  author = {Damanik, D. and Fillman, J.},
  note   = {In preparation},
  title  = {One-dimensional ergodic {S}chr\"{o}dinger operators---{II}. {S}pecific Classes},
}

@Article{JohMos_cmp82,
  author     = {Johnson, R. and Moser, J.},
  journal    = {Comm. Math. Phys.},
  title      = {The rotation number for almost periodic potentials},
  year       = {1982},
  issn       = {0010-3616,1432-0916},
  number     = {3},
  pages      = {403--438},
  volume     = {84},
  fjournal   = {Communications in Mathematical Physics},
  mrclass    = {34B25 (58F07 58F19)},
  mrnumber   = {667409},
  mrreviewer = {H.\ Hochstadt},
  url        = {http://projecteuclid.org/euclid.cmp/1103921211},
}

@Article{Johnson_jde86,
  author     = {Johnson, R. A.},
  journal    = {J. Differential Equations},
  title      = {Exponential dichotomy, rotation number, and linear differential operators with bounded coefficients},
  year       = {1986},
  issn       = {0022-0396,1090-2732},
  number     = {1},
  pages      = {54--78},
  volume     = {61},
  doi        = {10.1016/0022-0396(86)90125-7},
  fjournal   = {Journal of Differential Equations},
  mrclass    = {47E05 (34A20 47B39 58F19)},
  mrnumber   = {818861},
  mrreviewer = {A.\ Pelczar},
  url        = {https://doi.org/10.1016/0022-0396(86)90125-7},
}

@Article{Schwartzman_anmath57,
  author     = {Schwartzman, S.},
  journal    = {Ann. of Math. (2)},
  title      = {Asymptotic cycles},
  year       = {1957},
  issn       = {0003-486X},
  pages      = {270--284},
  volume     = {66},
  doi        = {10.2307/1969999},
  fjournal   = {Annals of Mathematics. Second Series},
  mrclass    = {54.0X},
  mrnumber   = {88720},
  mrreviewer = {J.\ C.\ Oxtoby},
  url        = {https://doi.org/10.2307/1969999},
}

@InCollection{DamFil_otaa23,
  author    = {Damanik, D. and Fillman, J.},
  booktitle = {From complex analysis to operator theory---a panorama},
  publisher = {Birkh\"auser/Springer, Cham},
  title     = {Gap labelling for discrete one-dimensional ergodic {S}chr\"odinger operators},
  year      = {[2023] \copyright 2023},
  isbn      = {978-3-031-31138-3; 978-3-031-31139-0},
  pages     = {341--404},
  series    = {Oper. Theory Adv. Appl.},
  volume    = {291},
  doi       = {10.1007/978-3-031-31139-0\_14},
  mrclass   = {47B36 (37A20 47A10 81Q10)},
  mrnumber  = {4651279},
  url       = {https://doi.org/10.1007/978-3-031-31139-0_14},
}

@Article{AvrOsaSei_phystod03,
  author  = {Avron, J. E. and Osadchy, D. and Seiler, R.},
  journal = {Physics Today},
  title   = {A Topological Look at the {Q}uantum {H}all {E}ffect},
  year    = {2003},
  issn    = {0031-9228},
  month   = {08},
  number  = {8},
  pages   = {38-42},
  volume  = {56},
  doi     = {10.1063/1.1611351},
  url     = {https://doi.org/10.1063/1.1611351},
}

@Article{BelLimTes_cmp83,
  author     = {J. Bellissard and R. Lima and D. Testard},
  journal    = {Comm. Math. Phys.},
  title      = {A metal-insulator transition for the almost {M}athieu model},
  year       = {1983},
  issn       = {0010-3616,1432-0916},
  number     = {2},
  pages      = {207--234},
  volume     = {88},
  fjournal   = {Communications in Mathematical Physics},
  mrclass    = {82A57 (39B10 47B39 81C10 82A65)},
  mrnumber   = {696805},
  mrreviewer = {W.\ Kirsch},
  url        = {http://projecteuclid.org/euclid.cmp/1103922281},
}

@Article{DamEmiFil_jst23,
  author   = {D. Damanik and I. Emilsd\'ottir and J. Fillman},
  journal  = {J. Spectr. Theory},
  title    = {The {S}chwartzman group of an affine transformation},
  year     = {2023},
  issn     = {1664-039X,1664-0403},
  number   = {4},
  pages    = {1281--1296},
  volume   = {13},
  doi      = {10.4171/jst/476},
  fjournal = {Journal of Spectral Theory},
  mrclass  = {35J10 (47A10 47B36 58J51)},
  mrnumber = {4707545},
  url      = {https://doi.org/10.4171/jst/476},
}

@Book{Lothaire2002,
  author     = {M. Lothaire},
  publisher  = {Cambridge University Press},
  title      = {Algebraic Combinatorics on Words},
  year       = {2002},
  series     = {Encyclopedia of Mathematics and its Applications},
  collection = {Encyclopedia of Mathematics and its Applications},
  place      = {Cambridge},
}

@Article{KlaLenSto_cmp03,
  author     = {S. Klassert and D. Lenz and P. Stollmann},
  journal    = {Comm. Math. Phys.},
  title      = {Discontinuities of the integrated density of states for random operators on {D}elone sets},
  year       = {2003},
  issn       = {0010-3616,1432-0916},
  number     = {2-3},
  pages      = {235--243},
  volume     = {241},
  doi        = {10.1007/s00220-003-0920-7},
  fjournal   = {Communications in Mathematical Physics},
  mrclass    = {82B44 (47B80 47N50 52C22)},
  mrnumber   = {2013799},
  mrreviewer = {David\ Damanik},
  url        = {https://doi.org/10.1007/s00220-003-0920-7},
}

@Article{LenMulVes_pos08,
  author     = {D. Lenz and P. M\"uller and I. Veseli\'c},
  journal    = {Positivity},
  title      = {Uniform existence of the integrated density of states for models on {$\Bbb Z^d$}},
  year       = {2008},
  issn       = {1385-1292,1572-9281},
  number     = {4},
  pages      = {571--589},
  volume     = {12},
  doi        = {10.1007/s11117-008-2238-3},
  fjournal   = {Positivity. An International Mathematics Journal Devoted to Theory and Applications of Positivity},
  mrclass    = {47B80 (37A30 37D35 47N50 81Q10)},
  mrnumber   = {2448749},
  mrreviewer = {Hatem\ Najar},
  url        = {https://doi.org/10.1007/s11117-008-2238-3},
}

@Article{Prokhorova_prep,
  author  = {M. Prokhorova},
  journal = {In preparation},
  title   = {Spectral flow in finite quantum graphs},
}

@Misc{Kellendonk_private_2026,
  author = {J. Kellendonk},
  title  = {private communication},
}

@Article{Oxtoby1952,
  author     = {Oxtoby, J. C.},
  journal    = {Bull. Amer. Math. Soc.},
  title      = {Ergodic sets},
  year       = {1952},
  issn       = {0002-9904},
  pages      = {116--136},
  volume     = {58},
  doi        = {10.1090/S0002-9904-1952-09580-X},
  fjournal   = {Bulletin of the American Mathematical Society},
  mrclass    = {46.3X},
  mrnumber   = {47262},
  mrreviewer = {K.\ Yosida},
  url        = {https://doi.org/10.1090/S0002-9904-1952-09580-X},
}

@Article{BoossBavnbek2005,
  author    = {B. Booss-Bavnbek and M. Lesch and J. Phillips},
  journal   = {Canadian Journal of Mathematics},
  title     = {Unbounded {F}redholm operators and spectral flow},
  year      = {2005},
  number    = {2},
  pages     = {225--250},
  volume    = {57},
  publisher = {Cambridge University Press},
}

@Article{DamLi_jfa2025,
  author   = {D. Damanik and L. Li},
  journal  = {Journal of Functional Analysis},
  title    = {Opening gaps in the spectrum of strictly ergodic {J}acobi and {CMV} matrices},
  year     = {2025},
  issn     = {0022-1236},
  number   = {12},
  pages    = {111182},
  volume   = {289},
  doi      = {https://doi.org/10.1016/j.jfa.2025.111182},
  url      = {https://www.sciencedirect.com/science/article/pii/S0022123625003647},
}

@Article{PeyTauVes_nano_2017,
  author  = {Peyerimhoff, N. and T{\"a}ufer, M. and Veseli{\'c}, I.},
  journal = {Nanosystems: Physics, Chemistry, Mathematics},
  title   = {Unique continuation principles and their absence for {Schr{\"o}dinger} eigenfunctions on combinatorial and quantum graphs and in continuum space},
  year    = {2017},
  number  = {2},
  pages   = {216--230},
  volume  = {8},
  doi     = {10.17586/2220-8054-2017-8-2-216-230},
}

@Article{Kellendonk2024,
  author  = {J. Kellendonk},
  journal = {Israel Journal of Chemistry},
  title   = {Topological Quantum Numbers in Quasicrystals},
  year    = {2024},
  issn    = {0021-2148},
  note    = {Overview on the theory of topological quantum numbers in quasicrystals via non-commutative topology},
  doi     = {10.1002/ijch.202400027},
}

@Article{BreLev_arXiv_2026,
  author       = {J. Breuer and N. Y. Levi},
  journal      = {arXiv:2603.08362},
  title        = {The Point Spectrum Of Periodic Quantum Trees},
  year         = {2026},
  primaryclass = {math.SP},
  url          = {https://arxiv.org/abs/2603.08362},
}

@Article{Levi_prep_2026,
  author  = {N. Y. Levi},
  journal = {In preparation},
  title   = {Heat Kernels and a Trace Formula on Quantum Graphs with a Potential},
  year    = {2026},
}

@Unpublished{Band,
  author = {R. Band and G. Sofer},
  note   = {In preparation},
  title  = {The {D}ry {T}en {M}artini {P}roblem for {S}turmian metric graphs},
}

\end{document}